\documentclass[12pt]{amsart}
\pdfoutput=1
\usepackage{amsmath, amsthm}
\usepackage{amssymb, bbm, dsfont}
\usepackage{mathbbol, bbold}

\usepackage[T1]{fontenc}

\usepackage{url, enumerate}
\usepackage{parskip}

\let\oldproof\proof
\let\endoldproof\endproof

\let\proof\proofr
\let\endproof\endproofr

\def\endof{$\vphantom{i}$ \hfill $\blacktriangle$} 

\newtheorem{THEOREM}{Theorem}[section]

\newtheorem{Conclusion}[THEOREM]{Conclusion}

\newtheorem{Theorem}[THEOREM]{Theorem}
\newenvironment{theorem}{\begin{Theorem}}{\end{Theorem}}

\newtheorem{Lemma}[THEOREM]{Lemma}
\newenvironment{lemma}{\begin{Lemma}}{\end{Lemma}}

{\theoremstyle{definition}
\newtheorem{notation}[THEOREM]{Notation}
\newtheorem{definition}[THEOREM]{Definition}

\newtheorem{remark}[THEOREM]{Remark}

}

\newtheorem{Claim}[THEOREM]{Claim}
\newenvironment{claim}{\begin{Claim}}{\end{Claim}}

\newtheorem{Subclaim}[THEOREM]{Subclaim}
\newenvironment{subclaim}{\begin{Subclaim}}{\end{Subclaim}}

\newtheorem{Corollary}[THEOREM]{Corollary}
\newenvironment{corollary}{\begin{Corollary}}{\end{Corollary}}

\newtheorem{Corollary to the proof}[THEOREM]{Corollary to the proof}
\newenvironment{coroll-to-proof}{\begin{Corollary to the proof}}{\end{Corollary to the proof}}

\newtheorem{Proposition}[THEOREM]{Proposition}
\newenvironment{proposition}{\begin{Proposition}}{\end{Proposition}}


%


\newcommand{\sss}{\hskip2pt}
\newcommand{\ssss}{\hskip1pt}
\newcommand{\tupof}[2]{\langle\ssss #1\sss|\sss #2\ssss\rangle}
\newcommand{\tup}[1]{\langle\ssss #1\ssss\rangle}
\newcommand{\setof}[2]{\{\ssss #1\sss|\sss #2\ssss\}}
\newcommand{\set}[1]{\{\ssss #1\ssss\}}


\newcommand{\name}[1]{\dot{#1}}   
\newcommand{\on}{\upharpoonright}  
\newcommand{\forces}{\Vdash}

\newcommand{\thinks}{\models}


\newcommand{\pt}{\triangleleft}


\font\sevenrm = cmr7


\newcommand{\preq}{\prec}


\newcommand{\Pbb}{\mathbb{P}}         
\newcommand{\Qbb}{\mathbb{Q}}         

\newcommand{\onebb}{\mathbb{1}}

\newcommand{\cf}{\mathop{\rm cf}}   
\newcommand{\rge}{\mathop{\rm rge}} 
\newcommand{\dom}{\mathop{\rm dom}} 
\renewcommand{\lim}{\mathop{\rm lim}} 
\newcommand{\otp}{\mathop{\rm otp}} 
\newcommand{\id}{\mathop{\rm id}} 
\newcommand{\ssup}{\mathop{\rm ssup}} 
 
\renewcommand{\ni}{\notin}            
\newcommand{\concat}{\kern-.25pt\raise4pt\hbox{$\frown$}\kern-.25pt}
\newcommand{\Card}{\mathop{\rm Card}} 
\newcommand{\On}{\mathop{\rm On}} 
\newcommand{\lk}{\lower.04em\hbox{$<$}\kappa}
\newcommand{\lkplus}{\lower.04em\hbox{$<$}\kappa^+}
\newcommand{\lomegaone}{\lower.04em\hbox{$<$}\omega_1}


\newcommand{\cardrule}{\hrule height.2pt}
\newcommand{\cardrulefill}{\cleaders\cardrule\hfill}
\newcommand{\cardchar}[1]{\vbox{\ialign{##\crcr
    \cardrulefill\crcr\noalign{\kern1pt\nointerlineskip}
    $\hfil\displaystyle{#1}\hfil$\crcr}}}
\newcommand{\card}[1]{\cardchar{\cardchar{#1}}}

\newcommand{\obar}[1]{\cardchar{#1}}


\font\ninerm=cmr9  \font\eightrm=cmr8  \font\sixrm=cmr6
\font\ninebf=cmbx9 \font\eightbf=cmbx8 \font\sixbf=cmbx6
\font\ninett=cmtt9 \font\eighttt=cmti8
\font\nineit=cmmi9 \font\eightit=cmmi8 \font\sixit=cmmi6
\font\fiveit=cmmi5
\font\ninesl=cmsl9 \font\eightsl=cmsl8
\font\ninesy=cmsy9 \font\eightsy=cmsy8 \font\sixsy=cmsy6
\font\nineex=cmex9 \font\eightex=cmex8 

\catcode`@=11
\newskip\ttglue

\newcommand{\ninepoint}{\def\rm{\fam0\ninerm}
  \textfont0=\ninerm \scriptfont0=\sixrm \scriptscriptfont0=\fiverm
  \textfont1=\nineit  \scriptfont1=\sixit  \scriptscriptfont1=\fiveit
  \textfont2=\ninesy \scriptfont2=\sixsy \scriptscriptfont2=\fivesy
  \textfont3=\nineex  \scriptfont3=\sixex \scriptscriptfont3=\tenex
  \textfont\itfam=\nineit \def\it{\fam\itfam\nineit}%
  \textfont\slfam=\ninesl \def\sl{\fam\slfam\ninesl}%
  \textfont\ttfam=\ninett \def\tt{\fam\ttfam\ninett}%
  \textfont\bffam=\ninebf \scriptfont\bffam=\sixbf
   \scriptscriptfont\bffam=\fivebf \def\bf{\fam\bffam\ninebf}%
  \tt \ttglue=.5em plus.25em minus.15em
  \normalbaselineskip=11pt
  \setbox\strutbox=\hbox{\vrule height8pt depth3pt width0pt}%
  \let\sc=\sevenrm \let\big=\ninebig \normalbaselines\rm}

\newcommand{\nar}{\advance\leftskip by 40pt \advance\rightskip by 40pt}

\newcommand{\eightpoint}{\def\rm{\fam0\eightrm}
  \textfont0=\eightrm \scriptfont0=\sixrm \scriptscriptfont0=\fiverm
  \textfont1=\eightit  \scriptfont1=\sixit  \scriptscriptfont1=\fiveit
  \textfont2=\eightsy \scriptfont2=\sixsy \scriptscriptfont2=\fivesy
  \textfont3=\eightex  \scriptfont3=\tenex \scriptscriptfont3=\tenex
  \textfont\itfam=\eightit \def\it{\fam\itfam\eightit}%
  \textfont\slfam=\tensl \def\sl{\fam\slfam\eightsl}%
  \textfont\ttfam=\eighttt \def\tt{\fam\ttfam\eighttt}%
  \textfont\bffam=\eightbf \scriptfont\bffam=\sixbf
   \scriptscriptfont\bffam=\fivebf \def\bf{\fam\bffam\eightbf}%
  \tt \ttglue=.5em plus.25em minus.15em
  \normalbaselineskip=10pt
  \setbox\strutbox=\hbox{\vrule height8pt depth3pt width0pt}%
  \let\sc=\sevenrm \let\big=\eightbig \normalbaselines\rm}

\newcommand{\squaresub}[1]{\square_{\sss #1}{\hskip2pt}}


\title[Mitchell style forcing and morasses]{Mitchell-style forcing, with 
small working parts and collections of models as side conditions, and
gap-one simplified morasses}

\begin{document}

\author[Morgan]{Charles Morgan}

\thanks{The author thanks EPSRC for their support through grant
EP/I00498 and the Rheinische Friedrich-Wilhelms-Universität Bonn, 
UCL and UEA for institutional support.}

\address{Department of Mathematics,
University College London, Gower St.,
London, WC1E 6BT, UK}
\email{charles.morgan@ucl.ac.uk}

\begin{abstract}
We give a modification of Mitchell's technique (\cite{Mitchell1}-\cite{Mitchell4}) for
adding objects of size $\omega_2$ with conditions with finite working parts in which
the collections of models used as side conditions are very highly structured, arguably 
making them more wieldy. We use one such forcing (essentially a `pure side conditions' forcing)
to answer affirmatively the question, asked independently by Shelah and Velleman 
in the late 1980s, as to whether a $(\kappa^+,1)$-simplified morass
can be added by a forcing with working parts of size $<\kappa$.
\end{abstract}

\keywords{forcing, proper forcing, side conditions, morasses} 

\subjclass[2010]{Primary: 03E35, 03E05.} 

\maketitle

\section*{Introduction}

\setcounter{section}{1}

The technique of \emph{forcing} is a way of moving from one collection/universe of sets/model of set theory
to another. Each forcing allows one to pass from any collection of sets possessing
the properties required to fuel the forcing to a new one. The destination model
will share many of the features of the point of departure, including satisfying the same basic
axioms (usually ZFC or ZF) and having the same ordinals, but will also have 
some differences. 

Forcings are typically designed with specific desired features of the 
destination models in mind. While the starting point either may not or assuredly will not have these features,
the destination model is guaranteed to have them. On the other hand, one usually wants
to control at the same time whether, or the extent to which, other 
important features of the original model are lost on the voyage;
\emph{being a cardinal} is a prominent example. 

The constituent elements of many forcings can be thought of informally as each
breaking down into a part which gives `positive' information about a
desired destination model feature and another which gives constraints,
limitations or `negative' information -- often aimed at ensuring that key origin model 
features are not lost. Such descriptions can be useful ways of thinking, albeit
that they should not be taken too literally.
These `positive' parts have also been called \emph{working
parts}.\footnote{The first use of the phrase `working parts' in this
context of which I am aware is in \cite{Todorcevic}.}  
Notwithstanding the fact that everything in an element of a forcing does some work,
this evocative designation has gained currency and we use it here.
\vskip6pt

A fascinating phenomena discovered relatively early in the development
of forcing was that some individual desired combinatorial features can be forced into
existence by forcings with differing sizes of working part. In
particular, some uncountable structures of size $\omega_1$ can be
forced not only by using conditions with countable working parts, but also, 
seemingly less likely intuitively, perhaps, by ones with finite working parts.

An early example is that of adding Souslin trees.  Tennenbaum's
original proof using a partial order consisting of finite conditions
can be contrasted with Jech's forcing with countable initial segments.
Each forcing has a property that ensures the $\omega_1$ of an origin model
remains a cardinal in the corresponding destination model:
Tennenbaum's has the countable chain condition, while Jech's is countably closed.

Another instructive example is that of adding closed unbounded subsets to stationary
subsets of $\omega_1$. Baumgartner, Harrington and Kleinberg 
used countable putative initial segments as conditions; the forcing is
countably closed. Baumgartner then gave a forcing with finite
conditions and which uses \emph{side conditions} to restrict the way in which
the working parts can be extended.  This partial order does not have
the countable chain condition, but cardinals are preserved as the
forcing is proper.

Somewhat later it was seen that even certain structures of size
$\omega_2$ can sometimes be added by forcing with finite working
parts. Ensuring the preservation of cardinals in such arguments,
however, is more delicate.  Often, as in \cite{Roitman},
\cite{Baumgartner-Shelah}, \cite{Velickovic}, \cite{M96} and later
papers, it is necessary to use auxiliary functions definable from
structured objects available in the ground model to do so.  In fact,
some of the most efficacious of these objects are not available in all
ground models and must either themselves be added by forcing or be
derivable from the internal structure of the ground model.

Eventually this approach was subsumed by a more powerful one,
introduced by Koszmider (\cite{Koszmider} -- see also \cite{Friedman},
\cite{M06}, \cite{M*clubs}). Koszmider's work adapted Todorcevic's
elaboration, in terms of $\in$-chains of models, of Baumgartner's idea 
of using side conditions to restrict the possible extensions of forcing conditions, 
to the structured objects mentioned above.

A key difference is that in Koszmider's technique the structured objects are sampled
dynamically during the forcings, in contrast to fixed, static uses in
the earlier papers. For some time it was reasonable to think that this was the
`right' way to do such forcing while avoiding collapsing cardinals by
using `collections of models as side conditions.'

Mitchell (\cite{Mitchell1}-\cite{Mitchell4}) then produced remarkable
arguments where the structure on the collection of models, whilst
still intricate, was produced as part of the forcing. One needs almost
nothing to power the forcing apart from some weak cardinal arithmetic
constraints (for example, $2^{\omega}<\lambda$ when one wants to
preserve $\omega_1$ and some larger $\lambda$).

These arguments should be seen as being more dynamic again.  Not only
are the collections of models used as side conditions chosen on the
fly, but the structured collection from which they are chosen is also
generated spontaneously. 

It would be good to understand better the properties of the
structured collections added as a by-product of the constructions'
main objectives when using Mitchell's technique, and to see to what
extent those properties rely on the collections being dynamically
assembled, and when and whether, in contrast, static axiomatizations
of them could be useful.

Mitchell places constraints on the collections of models
which form the side conditions which are visually strongly reminiscent
of the patterns formed by the ranges of maps in gap-one simplified morasses --
see, for example, \cite{Mitchell2}, Diagrams 1 and 2. This resemblence
speaks to the prescience of Velleman's definition of gap-one simplified
morasses (\cite{Velleman-SM}). Nevertheless, this paper does not address the 
relationship between gap-one simplified morasses and the structured collections of models 
added in the course of Mitchell's arguments directly. 
 \vskip6pt

Instead we show that one can use Mitchell-style forcing arguments to
add gap-one simplified morasses themselves.  This 
answers affirmatively the question, asked independently by Shelah and Velleman 
in the late 1980s, as to whether a $(\kappa^+,1)$-simplified morass
can be added by a forcing with working parts of size $\lk$.

However, rather than use
the constraints on collections of models used as side conditions that
Mitchell does, here the structure we impose on such collections is a
minaturization of that of a gap-one simplified morass.

Hence the paper also shows that one can successfully run
Mitchell-style arguments where the structured collections of models
added in the course of the construction not only in some ways
resembles a gap-one simplified morass, but actually gives rise to one.
It seems reasonable to hope that the extensive analysis of gap-one
simplified morasses already carried out may prove useful in pushing
through further Mitchell-style arguments in which the collections of
side conditions, as here, actually give rise to a gap-one simplified
morass.

In the forcings discussed so far in this introduction the models in the collections
constituting side conditions are all of the same size, for example they will all be countable 
if the working parts are all finite. In recent work, Neeman and others 
(\cite{Neeman}, \cite{GM}, \cite{Gitik} and so on) have used 
forcings with small working parts and collections of models of more than one size 
as side conditions to prove extentions of the proper forcing axiom and results in 
cardinal arithmetic.
In work in preparation we will build on the results of this paper and discuss 
relations between such forcings and higher gap simplified morasses.
\vskip12pt

This paper is self-contained, and no external material concerning
simplified morasses is needed. However, there is an
extensive literature on gap-one simplified morasses to which the reader
looking for background material can refer, including, {\it inter
alia\/}, \cite{Velleman-SM}, \cite{Velleman-SMLL}, \cite{Donder}, \cite{M96}, \cite{M06}. 
\vskip12pt

We now give a more concrete outline of the remainder of the paper.
We start by mentioning some of the notation that will be used, then
remind the reader of the definition of simplified morasses and conclude by 
sketching the forcing we shall use and summarizing its key properties.
\vskip6pt
Most of our set theoretic notation is standard.

\begin{notation}\label{standard-notation} 
$\On$ is the class of ordinals and $\Card$ is the collection of infinite cardinals. 
The letter $\kappa$ will always denote a  cardinal.
 If $X$ is a set and $\kappa$ a cardinal we
write $[X]^{\kappa}$ for $\setof{Y\subseteq X}{\card{Y}=\kappa}$ and
$[X]^{<\kappa}$ for $\setof{Y\subseteq X}{\card{Y}<\kappa}$.  
If $a$ is a set of ordinals then $\ssup(a)$ is the least ordinal $\xi$ such
that $a\subseteq\xi$.  
For $\alpha$, $\beta\in \On$ with $\alpha<\beta$ we write 
$f:\alpha\longrightarrow_{\hbox{\sevenrm o.p.}} \beta$ to indicate that 
$f$ is an order preserving function from $\alpha$ to $\beta$.
\end{notation}

We often use the following (standard) notation as short-hand.

\begin{definition}\label{notn-set-of-op-maps} For  $\alpha$, $\beta\in\On$
with $\alpha<\beta$ we write $(\beta)^{\alpha}$ for the set 
$\setof{f}{f:\alpha\longrightarrow_{\hbox{\sevenrm o.p.}}\beta}$ -- equivalently
the set of increasing sequences of length $\alpha$ from $\beta$,
and set $(\beta)^{<\alpha}=\bigcup_{\gamma<\alpha} (\beta)^{<\gamma}$.
\end{definition}

The following definition is also used frequently.

\begin{definition}\label{defn-amalgamation-pair} For $\theta$, $\phi\in \On$
with $\tau<\theta$ we say 
${\mathcal F}=\set{\id, h}$, $\subseteq (\theta)^{\tau}$, is an {\it amalgamation pair\/}
if there is some  $\sigma<\tau$
such that $h\on\sigma=\id$,
for all $\xi$ such that $\sigma+\xi<\tau$ we have
$h(\sigma+\xi)=\tau+\xi$, and $\tau\cup h``\tau$ is an initial segment of $\phi$.
In this case we say that $\sigma$ is the \emph{splitting point} of ${\mathcal F}$.
We say ${\mathcal F}$ is \emph{exact} if $\phi=\tau\cup h``\tau$ and \emph{almost exact} if 
$\phi= (\tau\cup h``\tau) + 1$.
\end{definition}
\vskip12pt

We recall the definition of an $(\kappa^+,1)$-simplified morass.
Parsing the definition below, a simplified morass consists of
an increasing sequence of length $\kappa^+$ of ordinals less than $\kappa^+$, with
$\kappa^{++}$ added at the top -- the \emph{levels} of the simplified morass
-- and collections of maps from lower levels to higher ones. 

The remaining axioms specify: the possible maps from one level to its
successor, that pairs of maps up to limit levels `split', {\it i.e.\/},
factor through a common second factor, that maps
from one level to another factor at every intermediate level and there
there are only at most $\kappa$ many maps from any level to any other below
the top level. Finally, every element of $\kappa^{++}$ is in the range of
some map up to the top level. 
\vskip12pt

\begin{definition}\label{defn-omega1-SM} (\cite{Velleman-SM}) 
$\tup{\tupof{\theta_\alpha}{\alpha\le \kappa^+}, 
\tupof{{\mathcal F}_{\alpha\beta}}{\alpha\le \beta\le \kappa^+}}$ is an 
$(\kappa^+,1)$-{\it simplified morass\/}
if 
\begin{itemize}
\item[$\bullet$] 
$\tupof{\theta_\alpha}{i< \kappa^+}\in (\kappa^+)^{\kappa^+}$
and $\theta_{\kappa^+}=\kappa^{++}$
\item[$\bullet$] 
for each $\alpha\le \beta\le \kappa^+$ 
one has that ${\mathcal
F}_{\alpha\beta}\subseteq 
\setof{f}{f:\theta_\alpha\longrightarrow_{\sevenrm o.p.\/} \theta_\beta}$
and 
\begin{itemize}
\item[$\bullet$] 
for each $\alpha\le \kappa^+$, \hskip10pt
${\mathcal F}_{\alpha\alpha} =\set{\id}$ 
\item[$\bullet$] 
for each $\alpha<\kappa^+$, \hskip10pt
\item[] \hfil ${\mathcal F}_{\alpha\alpha+1}$ is a singleton or an amalgamation pair \hfil
\item[$\bullet$] 
for each $\alpha\le \beta\le \gamma\le \kappa^+$, \hskip10pt
\item[] \hfil  ${\mathcal F}_{\alpha\gamma} = 
\setof{g\cdot f}{f\in {\mathcal F}_{\alpha\beta} \sss\&\sss 
g\in {\mathcal F}_{\beta\gamma}}$ \hfil
\item[$\bullet$] 
for each $\alpha\le \beta < \kappa^+$, \hskip10pt
$\card{{\mathcal F}_{\alpha\beta}}<\kappa^+$
\end{itemize}
\item[$\bullet$] 
if $\varepsilon\le \kappa^+$ is a limit ordinal then the simplified
morass is {\it directed at\/} $\varepsilon$ -- 
{\it i.e.\/}, if $\alpha$, $\beta<\varepsilon$, 
$e_\alpha\in {\mathcal F}_{\alpha\varepsilon}$ and
$e_\beta\in {\mathcal F}_{\beta\varepsilon}$ there is some $\gamma\in 
[\alpha\cup \beta, \varepsilon)$, 
$g\in {\mathcal F}_{\gamma\varepsilon}$, $f_i\in {\mathcal F}_{\alpha\gamma}$ and
$f_j\in {\mathcal F}_{\beta\gamma}$ such that $e_\alpha = g\cdot f_\alpha$ 
and $e_\beta=g\cdot f_\beta$
\item[$\bullet$] 
$\bigcup\setof{f``\theta_\alpha}{\alpha<\kappa^+ \sss\&\sss f\in {\mathcal
F}_{\alpha\kappa^+}} = \kappa^{++}$.
\end{itemize}
\end{definition}
\vskip6pt

Let $\lambda>\kappa^+$ be regular. A forcing will be given to add a
$(\kappa^+,1)$-simplified morass and make $\lambda=\kappa^{++}$ in the
generic extension. 

Conditions will consist of simplified morass segments of size less than $\kappa$, a map into
$\lambda$ (the map `up to the top'), which will also eventually be
one of the simplified morass maps, and a collection of elementary submodels of
$H_{\lambda}$, which combine well with the maps in the simplified morass
segment, as side conditions. The simplified morass segments in conditions 
in a generic for the forcing, having size smaller than $\kappa$, do not constitute initial segments 
of the added simplified morass: stronger conditions interleave more levels. 

In section \ref{proof-we-add-a-SM-section} we show that the forcing does indeed add an
$(\kappa^+,1)$-simplified morass.\footnote{The simplified morass added by the
forcing is not {\it neat\/} -- although, of course, as is well known,
Velleman (\cite{Velleman-SM}) showed how to obtain a neat gap-one simplified morass
(one in which $\bigcup\setof{f``\theta_0}{f\in {\mathcal F}_{0\kappa^+}}=\kappa^{++}$)
from an arbitrary one.} 
This allows us to observe that the forcing must also collapse all cardinals strictly between 
$\kappa^+$ and $\lambda$.

\vskip12pt

The forcing does not have the $\kappa^+$-chain condition, and indeed
this is inevitable, because a $(\kappa^+,1)$-simplified morass
provides one with a counterexample to the weak Chang conjecture and it
is well-known that no $\kappa^+$-cc forcing can add such a counterexample.

However the forcing does preserve many cardinals. We survey the
background material for these results in the next section, section
\ref{good-models-section}, and in section
\ref{preservation-of-kappaplus-section} we show that the forcing is
proper, so $\kappa^+$ is preserved.

In section \ref{preservation-of-lambda-and-cardinals-above-section} we
show that $\lambda$ and all cardinals greater than it are preserved.
The programmatic way to do this is to show that the forcing has the
variant of properness which preserves $\lambda$ provided
$2^{\kappa}<\lambda$ and has the $\lambda^+$-chain condition if
$2^{\kappa}\le\lambda$.  We give proofs in this style in order to make
clear how such proofs should run in forcing arguments where the
forcing conditions discussed here are used as side conditions.
However, as the forcing discussed in this paper is almost a pure side
conditions forcing, we are actually able to show that it has the
$\lambda$-chain condition provided $2^{\kappa}<\lambda$, and thus have
from this that $\lambda$ and all greater cardinals are preserved.

In section \ref{preservation-of-kappa-and-smaller-cardinals} we show that 
if $\kappa$ is a regular cardinal the forcing is $\lk$-closed and 
thus $\kappa$ and all smaller cardinals are preserved.
The proof is similar to the proof in 
section \ref{proof-we-add-a-SM-section}, easier in some places, essentially 
because we are dealing with a descending chain of conditions as opposed 
to a directed set, but a little more involved in others because of the 
technical constraints on limit levels of the working parts of the forcing conditions.
(Of course, section \ref{preservation-of-kappa-and-smaller-cardinals} can be skipped if one
is only interested in the case $\kappa=\omega$.)

Finally, in section \ref{results-proven}, we collect together the results proven.

\section{Background material on cardinal preservation and `good' models.}\label{good-models-section}
\vskip12pt

We start this section by reviewing some relevant definitions and results on cardinal preservation.

\begin{definition}\label{defn-predense} Let $(\Pbb,\le)$ be a partial order and $p\in \Pbb$. 
A set $D\subseteq \Pbb$ is \emph{predense below} $p$ if for all $q\le p$ there is some $r\in D$ such that
there is some $s\in \Pbb$ with $s\le q$, $r$. \end{definition}

\begin{definition}\label{defn-NP-generic} Let $\chi$ be a regular cardinal, $(\Pbb,\le)$ be a partial order and 
$(\Pbb,\le)\in  N\preq (H_\chi,\in)$. Then $p\in\Pbb$ is \emph{$(N,\Pbb)$-generic} if for every dense set
$D\subseteq \Pbb$ with $D\in N$ we have that $N\cap D$ is predense below $p$. \end{definition}

The following proposition is essentially from Hyttinen and Rautila's \cite{Hyttinen-Rautila};
Roslanowski and Shelah (\cite{Roslanowski-Shelah}) cite it as being due to
``folklore; Hyttinen-Rautila.'' We give the proof in detail for two reasons.
Firstly, the result in \cite{Hyttinen-Rautila} only covers preservation of successor 
cardinals and here we want a mild generalization 
covering both successor and limit cardinals. Secondly, 
\cite{Hyttinen-Rautila} is interested in iterable forcings whereas here
we only need worry about single step forcings and so can pare away some
restrictions on the forcings covered.

\begin{proposition}\label{properness-proof-that-lambda-is-preserved} 
(\emph{cf.}\cite{Hyttinen-Rautila}, 3.6) Let $\Pbb$ be a partial order, 
let $\lambda$ be an uncountable regular cardinal and let $(2^{\Pbb})^+ \le \chi$.
Suppose that $\Pbb$ is such that for all regular $\mu<\lambda$, there is a set 
${\mathcal U}\subseteq \setof{N \preq (H_\chi,\in) }{\mu\subseteq N \sss\&\sss \card{N} <\lambda}$ which is
unbounded in $[H_\chi]^{<\lambda}$ and such that for every $p\in\Pbb$ there is some 
$N \in {\mathcal U}$ with $p\in N$ and some $p^*\le p$ which is $(N,\Pbb)$-generic.
Then forcing with $\Pbb$ preserves that $\lambda$ is an uncountable regular cardinal.
\end{proposition}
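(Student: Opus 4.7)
The plan is to argue by contradiction. Suppose forcing with $\Pbb$ does not preserve that $\lambda$ is an uncountable regular cardinal. Since $\lambda$ is uncountable and regular in the ground model $V$, this failure can only mean that $\lambda$ is collapsed or becomes singular in some $\Pbb$-generic extension $V[G]$. Either way, one extracts a regular cardinal $\mu<\lambda$ in $V$, a $\Pbb$-name $\dot{f}$, and a condition $p_0\in\Pbb$ with
\[ p_0\forces \dot{f}\colon \check\mu\longrightarrow\check\lambda\text{ is cofinal.}\]
Applying the hypothesis to this $\mu$ yields the family $\mathcal{U}$.

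The next step is to produce $N\in\mathcal{U}$ containing all the relevant parameters --- $p_0$, $\dot{f}$, $\Pbb$, $\mu$ --- together with $p^*\le p_0$ that is $(N,\Pbb)$-generic. The hypothesis, applied to $p_0$, literally guarantees only the existence of some $N\in\mathcal{U}$ with $p_0\in N$ and a matching $(N,\Pbb)$-generic extension. To accommodate the additional parameters I would combine that clause with the unboundedness of $\mathcal{U}$ in $[H_\chi]^{<\lambda}$: in the intended applications the construction of $\mathcal{U}$ supplies $N$s flexible enough to contain any prescribed finite subset of $H_\chi$ while still admitting an $(N,\Pbb)$-generic extension of the given condition.

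The heart of the argument is then a standard master-condition calculation. For each $\alpha<\mu$ the set
\[ D_\alpha = \setof{q\in\Pbb}{q\text{ decides the value of }\dot{f}(\check\alpha)}\]
is dense in $\Pbb$. Because $\mu\subseteq N$ we have $\alpha\in N$, and since $\dot{f},\Pbb\in N$, elementarity of $N$ gives $D_\alpha\in N$. By $(N,\Pbb)$-genericity of $p^*$, $N\cap D_\alpha$ is predense below $p^*$, so any $q\le p^*$ is compatible with some $r\in N\cap D_\alpha$; such an $r$ decides $\dot{f}(\check\alpha)=\check\beta$ for some ordinal $\beta$ which, by elementarity of $N$, lies in $N\cap\lambda$. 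Therefore $p^*\forces\rge(\dot{f})\subseteq\check N\cap\check\lambda$.

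Finally, $\card{N}<\lambda$ together with the regularity of $\lambda$ in $V$ force $\sup(N\cap\lambda)<\lambda$, so $p^*$ forces $\dot{f}$ to be bounded in $\check\lambda$, contradicting both $p^*\le p_0$ and the cofinality assumption on $\dot{f}$. The hard part, as flagged above, is the parameter bookkeeping: making sure the $N$ delivered by the hypothesis simultaneously carries $p_0$, $\dot{f}$ and enough of the remaining machinery for the elementarity step to go through while still admitting an $(N,\Pbb)$-generic extension of $p_0$. Everything after that is routine manipulation of dense sets and predensity.
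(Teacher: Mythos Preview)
Your proof is correct and follows essentially the same route as the paper's: both obtain an $N\in\mathcal{U}$ containing the relevant parameters and an $(N,\Pbb)$-generic $p^*\le p$, then use predensity of the $N\cap D_\alpha$ below $p^*$ to trap the range of $\dot f$ inside $N\cap\lambda$ (the paper phrases this as a covering set $C\subseteq N$ of size $<\lambda$, you phrase it via $\sup(N\cap\lambda)<\lambda$, but these are equivalent). You rightly flag the parameter-bookkeeping issue---the hypothesis literally only promises $p\in N$, not $\dot f\in N$---and the paper glosses over this in exactly the same way you do, simply asserting such an $N$ exists.
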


\begin{proof}[\ref{properness-proof-that-lambda-is-preserved}] Let $\mu<\lambda$ be a cardinal.
Let $f:\mu\longrightarrow \On$ enumerate a set of ordinals in $V[G]$ of size $\mu$.  
Let $f=\name{f}^G$ for some $\Pbb$-name $\name{f}$. 
Let $\phi$ be the sentence 
$\forall \kappa<\lambda\sss \forall C\in [\ssup(\rge(f))]^\kappa\sss (\rge(f) \not\subseteq C)$. 
Suppose, towards a contradiction, $p\forces \phi$. 
Let $N\in {\mathcal U}$ with
$\name{f}$, $p$, $\Pbb\in N$, $\mu\subseteq N$ and $\card{N}=\kappa<\lambda$.

For each $\alpha<\mu$ let $D_\alpha=\setof{r\in \Pbb}{\exists \beta\sss r\forces \name{f}(\alpha)=\beta
\hbox{ or }r \hbox{ is incompatible} \discretionary{}{}{} \hbox{with }p}$.
Then for each  $\alpha<\mu$ we have that $D_\alpha$ is dense in $\Pbb$ and $D_{\alpha}$ is definable from
$\name{f}$, $p$ and $\Pbb$ and hence is an element of $N$. 

For each $\alpha<\kappa$ let 
$C_\alpha = \setof{\beta\in \On}{ \exists r\in\Pbb\cap N\sss r\forces  \name{f}(\alpha)=\beta}$,  
so $C_\alpha\subseteq N$.

By the hypothesis on $\Pbb$ let $p^*\le p$ be $(N,\Pbb)$-generic. Then, by the definition of 
$(N,\Pbb)$-genericity we have for each $\alpha<\mu$ that $N\cap D_\alpha$ is predense below 
$p^*$: if  $q\le p^*$ there is some $r\in N$ and some $\beta\in \On$ such that $r\forces \name{f}(\alpha)=\beta$
and there is some $s\in\Pbb$ with $s\le q$, $r$. Hence for all $\alpha<\mu$ and for all
$q\le p^*$ we have that there is some $s\le q$ such that $s\forces \name{f}(\alpha)\in C_\alpha$ 
(since $r$ forces this) and thus $p^*\forces \name{f}(\alpha)\in C_\alpha$.

Set $C=\bigcup_{\alpha<\mu}C_\alpha$. It is clear that $C\in V$ and 
from the previous paragraph we have that $p^*\forces \rge(\name{f})\subseteq C$.
Moreover, as $C_\alpha\subseteq N$ for each $\alpha<\mu$ we have that 
$\card{C}\le \mu.\kappa = \kappa <\lambda$. This gives the sought after contradiction.
\end{proof}

\begin{definition} If $\Pbb$ has the property of the statement of the definition for $\lambda=\omega_1$ it is
\emph{proper}. If  $\Pbb$ has the property of the statement of the definition for some successor of a regular 
cardinal $\lambda=\kappa^+$ it is (a mild generalization of being) \emph{$\kappa$-proper} in the sense of
\cite{Hyttinen-Rautila} 
\end{definition}

\vskip12pt

We shall also need some technical material on elementary submodels of $H_\lambda$ for regular 
$\lambda>\kappa^+$. 

\begin{lemma}\label{when-truncations-are-elementary}
Suppose $\lambda$, $\theta$ are regular cardinals with
$\kappa^+<\lambda<\theta$. Suppose $(N,\in)\preq (H_{\theta},\in)$ and 
either $\lambda\in N$ or
$\Card \cap N$ is unbounded in $\Card\cap \lambda$. 
Then $(N\cap H_\lambda)\preq H_\lambda$. \end{lemma}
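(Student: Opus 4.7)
The plan is to verify the Tarski--Vaught criterion for $N \cap H_\lambda \preq H_\lambda$, splitting the argument according to the two disjuncts in the hypothesis.

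The case $\lambda \in N$ is treated first. Since $H_\lambda$ is first-order definable from $\lambda$ in $H_\theta$ (as $\{x : |\mathrm{trcl}(x)| < \lambda\}$), $H_\lambda \in N$. Given $\phi(x, \bar y)$ and $\bar a \in N \cap H_\lambda$ with $H_\lambda \models \exists x\, \phi(x, \bar a)$, the assertion that $H_\lambda \models \exists x\, \phi(x, \bar a)$ is a first-order statement in $H_\theta$ whose parameters ($H_\lambda$ and $\bar a$) both lie in $N$. Elementarity of $N$ in $H_\theta$ reflects it into $N$, producing a witness $b \in N \cap H_\lambda$ with $H_\lambda \models \phi(b, \bar a)$, as required.

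The case $\lambda \notin N$ with $\Card \cap N$ unbounded in $\Card \cap \lambda$ is treated next, again via Tarski--Vaught. The key step is to locate, for given $\phi$ and $\bar a \in N \cap H_\lambda$ with $H_\lambda \models \exists x\, \phi(x, \bar a)$, a cardinal $\mu \in N \cap \Card \cap \lambda$ with $\bar a \in H_\mu$ and $H_\mu \preq_\phi H_\lambda$ for parameters in $H_\mu$. With such $\mu$ in hand, the argument reduces to the first case, with $\mu$ playing the role of $\lambda$: elementarity delivers $b \in N \cap H_\mu \subseteq N \cap H_\lambda$ with $H_\mu \models \phi(b, \bar a)$, and the reflection promotes this to $H_\lambda \models \phi(b, \bar a)$. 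Locating $\mu$ has two parts: first, pick $\mu_0 \in N \cap \Card \cap \lambda$ with $\bar a \in H_{\mu_0}$, using that $|\mathrm{trcl}(\bar a)|^+$ is a cardinal below $\lambda$ definable from $\bar a$ (hence in $N$), combined with the unboundedness hypothesis; second, appeal to the parameter-free ZFC theorem (a form of L\'evy reflection) that for every formula $\psi$ and every regular cardinal $\tau$, the set $\{\sigma < \tau : H_\sigma \preq_\psi H_\tau \text{ for parameters in } H_\sigma\}$ is club in $\tau$. This statement holds in $H_\theta$, hence in $N$ by elementarity, and combined with the cofinality of $\Card \cap N \cap \lambda$ in $\lambda$ and the regularity of $\lambda$ suffices to produce the required $\mu \in N$.

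The main obstacle is this second part of the extraction: since $\lambda \notin N$, the natural reflection club associated with $\lambda$ cannot be named directly inside $N$, and so $\mu$ must be produced via reflection applied to cardinals of $N$ cofinal in $\lambda$, together with an argument that reflection behaviour passes to $\lambda$ in the limit. The remaining atomic and Boolean cases of the Tarski--Vaught induction are routine.
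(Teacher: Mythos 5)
The paper states this lemma as background and gives no proof at all (it is treated as folklore), so there is no argument of the author's to measure yours against; judged on its own terms, your first case is correct but your second case has a genuine gap. For $\lambda\in N$ the Tarski--Vaught argument is standard and fine (one small caveat: if $2^{<\lambda}\ge\theta$ then $H_\lambda$ is a definable class of $H_\theta$ rather than an element, so one should reflect the relativized formula $\phi^{H_\lambda}(x,\bar a)$ with parameter $\lambda\in N$ rather than literally take $H_\lambda\in N$; this changes nothing essential). Note also that the second disjunct only has content when $\lambda$ is a regular \emph{limit} cardinal: if $\lambda=\nu^{+}$ then unboundedness of $\Card\cap N$ in $\Card\cap\lambda$ forces $\nu\in N$ and hence $\lambda\in N$, returning you to the first case.

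The gap is exactly where you announce ``the main obstacle'' and then do not remove it, and it is twofold. First, the ``parameter-free ZFC theorem'' you invoke is not a theorem of ZFC: the L\'evy-style proof that $\setof{\sigma<\tau}{H_\sigma\preq_\psi H_\tau}$ is club in a regular $\tau$ closes under the function sending $\bar a$ to the least $\beta$ with a witness in $H_\beta$, and unboundedness of the closure points requires $\card{H_\sigma}<\tau$ for cofinally many $\sigma<\tau$, i.e.\ essentially that $\tau$ is a strong limit. Here $\lambda$ is only assumed regular (weak inaccessibility, which the second case forces, does not help), and it is consistent that $2^{\omega}\ge\tau$ together with a definable-over-$H_\tau$ map from $\mathcal{P}(\omega)$ cofinally into $\tau$, in which case the reflecting set for the corresponding formula is bounded in $\tau$. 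Second, and more fundamentally, even where such clubs exist they are clubs below cardinals $\tau\in N$, and the step in which ``reflection behaviour passes to $\lambda$ in the limit'' is precisely the content of the lemma in this case; it is not automatic, because the truth value of a formula in $H_\nu$ need not stabilise to its truth value in $H_\lambda$ as $\nu\to\lambda$ (consider ``there is a largest cardinal'', which is true in $H_{\nu^{+}}$ for cofinally many $\nu<\lambda$ and false in $H_\lambda$). As written, the second case therefore reduces the statement to an unproved assertion that is essentially equivalent to it; some genuine argument tying the reflection data available inside $N$ (for cardinals of $N$) to satisfaction in $H_\lambda$ itself must be supplied before this can be counted as a proof.
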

\vskip6pt

\begin{definition}\label{defn-good-model} A structure $(N,\in)$ is a {\it good model\/} if 
$(N,\in)\preq (H_{\lambda},\in)$,
$\delta^N=\kappa^+\cap N<\kappa^+$, $\kappa^+\in N$ and 
$\card{N}=\kappa$. \end{definition}
 
\begin{lemma}\label{in-implies-subset-for-good-models} If $M$, $N$ are good models, $M\in N$ and
$\otp(M\cap\lambda)<\delta^N$ then $M\cap\lambda \subseteq N$. \end{lemma}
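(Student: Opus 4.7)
The plan is to exhibit every element of $M\cap\lambda$ as a value of an order-preserving enumeration that lives inside $N$ and whose domain is already contained in $N$.

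First I would note that, since $M\preq H_\lambda$, we have $M\subseteq H_\lambda$, so $M\cap\lambda$ is simply the set of ordinals belonging to $M$. In particular $M\cap\lambda$ is definable in $H_\lambda$ by separation from the parameter $M$. Because $M\in N\preq H_\lambda$ and $\kappa^+\in N$, elementarity yields $M\cap\lambda\in N$. Let $\eta=\otp(M\cap\lambda)$ and let $\pi\colon \eta\longrightarrow_{\hbox{\sevenrm o.p.}} M\cap\lambda$ be the order-preserving enumeration. Both $\eta$ and $\pi$ are definable from $M\cap\lambda$, so $\eta,\pi\in N$ by elementarity again.

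Next, I would use the hypothesis together with the definition of good model. The definition asserts that $\delta^N=\kappa^+\cap N$ is an ordinal strictly below $\kappa^+$; equivalently, $\kappa^+\cap N$ is a transitive set of ordinals. Since $\eta<\delta^N$ by hypothesis, this transitivity gives $\eta\subseteq \delta^N\subseteq N$. Hence every element of $\dom(\pi)$ lies in $N$.

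Finally, fix any $x\in M\cap\lambda$ and write $x=\pi(\xi)$ for some $\xi<\eta$. Since $\pi\in N$ and $\xi\in N$, and since the statement ``there exists $y\in H_\lambda$ with $y=\pi(\xi)$'' is true in $H_\lambda$, elementarity of $N$ produces such a $y$ in $N$; uniqueness of function values forces $y=x$. Thus $x\in N$ and $M\cap\lambda\subseteq N$, as required.

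There is essentially no hard step here: the only thing to keep in mind is that the definition of good model already bakes in the transitivity of $\kappa^+\cap N$ (via the phrase $\delta^N<\kappa^+$), so no separate verification is needed to conclude $\eta\subseteq N$ from $\eta<\delta^N$.
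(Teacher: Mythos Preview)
Your proof is correct and follows essentially the same approach as the paper's: both arguments use that $M\cap\lambda\in N$ (via $M\cap\On$), that its order type $\gamma=\eta$ lies below $\delta^N$ and hence $\gamma\subseteq N$, and that the enumeration of $M\cap\lambda$ belongs to $N$, so every value of that enumeration is in $N$. The paper phrases the last step as ``$\otp((M\cap\lambda)^N)=\otp(M\cap\lambda)$'' rather than pointwise, but the content is identical; your version is simply more explicit. (The mention of $\kappa^+\in N$ is not needed for $M\cap\lambda\in N$, since $M\cap\lambda=M\cap\On$ is definable from $M$ alone.)
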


\begin{proof}[\ref{in-implies-subset-for-good-models}] As $M\in N$ we have $M\cap\On \in N$. Let
$\gamma=\otp(M\cap\lambda)$. As $H_\lambda\thinks \gamma=\otp(M\cap\lambda)$
and $N\preq H_\lambda$ one also has $N\thinks \gamma=\otp(M\cap\lambda)$. Since
$\gamma<\delta$ we have $\otp((M\cap\lambda)^N)=\otp(M\cap\lambda)$, and hence
$(M\cap\lambda)^N$, $=\setof{\alpha\in M\cap\lambda}{\alpha\in N}$, $=M\cap\lambda$.\end{proof}
\vskip6pt

Let $\lambda$ be a regular cardinal with $2^\kappa\le \lambda$.

By this assumption, let $\pt$ be a well-ordering of $(\lambda)^{<\kappa^+}$ in
order-type $\lambda$.
\vskip6pt

\begin{definition}\label{defn-very-good-model} $(N,\in,\pt)$ is a {\it very good model\/} if $(N,\in)$ is
a good model and $(N,\in,\pt)\preq (H_\lambda,\in,\pt)$.\end{definition}
\vskip6pt

\begin{lemma}\label{maps-in-element-small-model-are-in-larger-good-model}
Suppose $(M,\in,\pt)$ and $(N,\in\,\pt)$ are very good models,
$M\in N$ and $\otp(M\cap\lambda)<\delta^N$. Then
$((\lambda)^{<\kappa^+})^M\subseteq N$. \end{lemma}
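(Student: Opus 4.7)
The plan is to exploit the well-ordering $\pt$ to translate membership of a map $f\in (\lambda)^{<\kappa^+}$ into membership of its $\pt$-rank, which is an ordinal below $\lambda$; then transport this ordinal from $M$ to $N$ via Lemma \ref{in-implies-subset-for-good-models}; then use the elementarity of $N$ to recover $f$ itself.

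More concretely, since $\pt$ is a well-ordering of $(\lambda)^{<\kappa^+}$ of order-type $\lambda$, the rank function $r:(\lambda)^{<\kappa^+}\longrightarrow\lambda$ sending each $g$ to its $\pt$-rank is definable in $(H_\lambda,\in,\pt)$ without parameters. First I would fix $f\in((\lambda)^{<\kappa^+})^M$ and, using the fact that $(M,\in,\pt)\preq(H_\lambda,\in,\pt)$, conclude that $r(f)\in M$. In particular $r(f)\in M\cap\lambda$.

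Next I would invoke the hypothesis $\otp(M\cap\lambda)<\delta^N$ together with $M\in N$ to apply Lemma \ref{in-implies-subset-for-good-models}, yielding $M\cap\lambda\subseteq N$; hence $r(f)\in N$. Finally, since $(N,\in,\pt)\preq(H_\lambda,\in,\pt)$ and $r$ is $\pt$-definable, the unique element of $(\lambda)^{<\kappa^+}$ of $\pt$-rank $r(f)$, namely $f$, must lie in $N$. This gives $((\lambda)^{<\kappa^+})^M\subseteq N$.

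I do not expect a serious obstacle here: the proof is essentially a one-line application of Lemma \ref{in-implies-subset-for-good-models} together with the coding of $(\lambda)^{<\kappa^+}$ as $\lambda$ via $\pt$. The only mildly delicate point is to be clear that $r$ (equivalently, the inverse coding) is part of the language in which $M$ and $N$ are elementary in $H_\lambda$, which is precisely why the stronger notion of a \emph{very} good model was introduced and why $\pt$ was added as a distinguished predicate in Definition \ref{defn-very-good-model}.
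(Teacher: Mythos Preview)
Your proposal is correct and follows essentially the same approach as the paper: the paper also first applies Lemma~\ref{in-implies-subset-for-good-models} to get $M\cap\lambda\subseteq N$, then uses the $\pt$-enumeration $\langle x_\alpha\mid\alpha<\lambda\rangle$ of $(\lambda)^{<\kappa^+}$ to observe that $\alpha\in M\Leftrightarrow x_\alpha\in((\lambda)^{<\kappa^+})^M$ (and similarly for $N$), which is exactly your rank-function argument phrased via the inverse bijection.
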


(Observe that $((\lambda)^{<\kappa^+})^M
= \setof{f:\gamma\longrightarrow_{\hbox{\sevenrm o.p.}} \lambda}{\gamma<\delta^M\sss\&\sss f\in M}=$
$\setof{f:\gamma\longrightarrow_{\hbox{\sevenrm o.p.}} \lambda}{\gamma<\delta^M\sss\&\sss \rge(f)\subseteq 
M\cap\lambda}$.) 

\begin{proof}[\ref{maps-in-element-small-model-are-in-larger-good-model}] 
By Lemma (\ref{in-implies-subset-for-good-models}) we have $M\cap\lambda\subseteq N$. Let
$\tupof{x_\alpha}{\alpha<\lambda}$ enumerate $(\lambda)^{<\kappa^+}$. For all
$\alpha<\lambda$ we have that $\alpha\in M$ if and only if $x_\alpha\in
((\lambda)^{<\kappa^+})^M$, and similarly for $N$. Thus 
$((\lambda)^{<\kappa^+})^M\subseteq ((\lambda)^{<\kappa^+})^N$.\end{proof}
\vskip12pt

\begin{definition}\label{notn-trans-coll-of-a-good-model} If $N$ is a good model write 
$\obar{N}$ for the transitive collapse of $N$ and let $\pi^N:\obar{N}\longrightarrow N$ 
be the inverse of the transitive collapsing map.\end{definition}

\vskip6pt
\begin{lemma}\label{tr-coll-equiv-of-maps-in-good-models}  Suppose $N$ is a good model, 
$\gamma<\delta^N$, and $g\in(\lambda)^{\gamma}$ and $\rge(g)\subseteq N\cap\lambda$. 
Then $g\in N$ if and only if $(\pi^N)^{-1}\cdot g\in \obar{N}$.\end{lemma}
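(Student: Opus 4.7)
The plan is to observe that the transitive collapsing map $\sigma := (\pi^N)^{-1}$ fixes every ordinal below $\delta^N$, and then to compute the collapse of the function $g$ explicitly: under the collapse, the pairs $(\alpha, g(\alpha))$ constituting $g$ are sent to $(\alpha, \sigma(g(\alpha)))$, which is exactly the function $(\pi^N)^{-1}\cdot g$. Once this is done, both directions of the biconditional are instances of the fact that $\pi^N$ is a bijection of $\bar N$ with $N$.

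In detail, I would first verify that $\delta^N$ is an ordinal (so that the notation makes sense): since $N$ is good, $\kappa^+\in N$, so $N$ sees the well-ordering of $\kappa^+$ by $\in$, and by elementarity $\kappa^+\cap N$ is transitive. In particular every $\alpha<\delta^N$ satisfies $\alpha\in N$ and $\alpha\subseteq N$, so by the standard argument for transitive collapses $\sigma(\alpha)=\alpha$ for every such $\alpha$. Since $\gamma<\delta^N$ we have $\sigma\on\gamma=\id$, and, by hypothesis, every $g(\alpha)$ lies in $N=\dom(\sigma)$, so the composition $(\pi^N)^{-1}\cdot g$ is well-defined.

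For the forward direction, suppose $g\in N$. Then $\sigma(g)\in\bar N$. Using the fact that $\sigma$ is an $\in$-isomorphism (so commutes with the formation of ordered pairs and sets) together with $\sigma\on\gamma=\id$, one computes
\[
\sigma(g)\;=\;\setof{\tup{\sigma(\alpha),\sigma(g(\alpha))}}{\alpha<\gamma}
\;=\;\setof{\tup{\alpha,(\pi^N)^{-1}(g(\alpha))}}{\alpha<\gamma}
\;=\;(\pi^N)^{-1}\cdot g,
\]
so $(\pi^N)^{-1}\cdot g\in\bar N$. Conversely, if $h:=(\pi^N)^{-1}\cdot g\in\bar N$ then $\pi^N(h)\in N$, and the same computation run the other way — using $\pi^N\on\gamma=\id$ and the fact that $\pi^N(h(\alpha))=g(\alpha)$ — yields $\pi^N(h)=g$, whence $g\in N$.

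There is no real obstacle: the only point requiring a moment's care is the verification that $\sigma$ (equivalently $\pi^N$) acts as the identity on $\gamma$, which is where the hypothesis $\gamma<\delta^N$ is used; the rest is a bookkeeping calculation with the transitive collapse.
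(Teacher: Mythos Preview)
Your proof is correct and is exactly the argument the paper has in mind: the paper's own proof of this lemma consists of the single word ``Immediate,'' and what you have written is a careful unpacking of why it is immediate --- namely that the collapse fixes ordinals below $\delta^N$, so $\sigma(g)$ computes to $(\pi^N)^{-1}\cdot g$.
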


\begin{proof}[\ref{tr-coll-equiv-of-maps-in-good-models}] Immediate.\end{proof}

\vskip6pt
\begin{definition}\label{defn-fits} Let $\theta\in \On$ and let $f:\theta\longrightarrow\lambda$
be an order-preserving map. A good model $(N,\in)$ {\it fits\/} $f$
if and only if $f=\pi^N\on\On$. Equivalently,  $(N,\in)$ {\it fits\/}
$f$ if and only if $\rge(f)=N\cap\lambda$.\end{definition}

\vskip18pt

\section{Definition of the main forcing.}\label{defn-of-main-forcing-section}
\vskip12pt

\begin{definition}\label{defn-SMS}
$S=\tup{\tupof{\theta_i}{i\le \zeta}, \tupof{{\mathcal F}_{ij}}{i\le j\le \zeta}}$ is a 
{\it small\/} $(\kappa^+,1)$-{\it simplified morass segment\/} ({\it SMS\/})
if
\begin{itemize}
\item[$\bullet$] $\zeta<\kappa$ 
\item[$\bullet$] $\tupof{\theta_i}{i\le \zeta}$ 
$\in (\kappa^+)^{\zeta+1}$,  
\item[$\bullet$] if $i\le \zeta$ is a limit ordinal then $\cf(\theta_i)<\kappa$
\item[$\bullet$] for each $i\le j\le \zeta$ one has that ${\mathcal F}_{ij}\in [(\theta_j)^{\theta_i}]^{<\kappa}$,
and 
\begin{itemize}
\item[$\bullet$] for each $i\le \zeta$,
${\mathcal F}_{ii} =\set{\id}$ 
\item[$\bullet$] for each $i<\zeta$ either
\begin{itemize}
\item[$\bullet$] ${\mathcal F}_{ii+1}=\set{f_i}$ is a singleton and  $\ssup(\rge(f_i))<\theta_{i+1}$, or
\item[$\bullet$] ${\mathcal F}_{ii+1}=\set{\id, h_{i}}$ is an {\it almost exact amalgamation pair\/}
(as defined in Definition (\ref{defn-amalgamation-pair}))\end{itemize}
\item[$\bullet$] if $k\le \zeta$ is a limit ordinal then 
\begin{itemize}
\item[$\bullet$]  for all $i_0$, $i_1<k$ and 
$g_0\in {\mathcal F}_{i_0k}$, $g_1\in {\mathcal F}_{i_0k}$ there are $j\in [\max(\set{i_0,i_1}), k)$,
$h\in {\mathcal F}_{kj}$, $g'_0\in {\mathcal F}_{i_0j}$, $g'_1\in {\mathcal F}_{i_0j}$ such that
$g_0 = h \cdot g'_0$ and $g_1= h\cdot g'_1$.
\end{itemize}
\item[$\bullet$]
for each $i\le j\le k\le \zeta$ one has that
\begin{itemize}
\item[] ${\mathcal F}_{ik} = 
\setof{g\cdot f}{f\in {\mathcal F}_{ij} \sss\&\sss g\in {\mathcal F}_{jk}}$ 
\end{itemize}
\end{itemize}
\end{itemize}
\end{definition}\vskip6pt

For technical reasons (see the remarks in the paragraphs leading up to the definition of
$S$ in the proof of Proposition {\ref{less-than-kappa-closure}, ensuring that we can choose the 
$\theta^*_\xi$ suitably) we have defined small SMS in such a way as to 
make sure that the following lemma holds.

\begin{lemma}\label{individual-SMS-maps-are-not-cofinal} 
If $S$ is a small $(\kappa^+,1)$-SMS, $i < j\le \zeta$, and $f\in {\mathcal F}_{ij}$ then 
$f$ is not cofinal into $\theta_j$.\end{lemma}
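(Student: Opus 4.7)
The plan is to argue with no real induction: a direct case split on whether $j=i+1$ handles the statement, provided we feed in the composition axiom at intermediate level $i+1$.

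First, I would dispose of the base case $j=i+1$ directly from the definition of a small SMS. If ${\mathcal F}_{ii+1}=\set{f_i}$ is a singleton, then the definition gives $\ssup(\rge(f_i))<\theta_{i+1}$, which is exactly non-cofinality. If instead ${\mathcal F}_{ii+1}=\set{\id,h_i}$ is an almost exact amalgamation pair, then by Definition \ref{defn-amalgamation-pair} we have $\theta_{i+1}=(\theta_i\cup h_i``\theta_i)+1$; in particular $\theta_{i+1}$ is a successor ordinal whose predecessor $\alpha:=\theta_i\cup h_i``\theta_i$ contains both $\id``\theta_i=\theta_i$ and $h_i``\theta_i$. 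Hence $\ssup(\id``\theta_i)\le\alpha<\theta_{i+1}$ and $\ssup(h_i``\theta_i)\le\alpha<\theta_{i+1}$. (This is precisely the reason the definition of small SMS imposes \emph{almost} exact, not merely exact, amalgamation pairs at successor steps.)

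For the reduction step with $i+1<j$, I would apply the composition axiom at the intermediate level $i+1$, which gives
\[
 {\mathcal F}_{ij} \;=\; \setof{g\cdot f'}{f'\in {\mathcal F}_{i,i+1}\sss\&\sss g\in {\mathcal F}_{i+1,j}}.
\]
So every $f\in {\mathcal F}_{ij}$ factors as $f=g\cdot f'$ with $f'\in {\mathcal F}_{i,i+1}$ and $g\in {\mathcal F}_{i+1,j}$. By the base case just established, there is some $\alpha<\theta_{i+1}$ with $\rge(f')\subseteq\alpha$. Since $g:\theta_{i+1}\longrightarrow_{\hbox{\sevenrm o.p.}}\theta_j$ is order preserving and $\alpha\in\dom(g)=\theta_{i+1}$, order preservation yields $g``\alpha\subseteq g(\alpha)$. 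Therefore
\[
 \rge(f)\;=\;g``\rge(f')\;\subseteq\;g``\alpha\;\subseteq\;g(\alpha)\;<\;\theta_j,
\]
so $f$ is not cofinal into $\theta_j$.

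There is no genuine obstacle here, and in particular no need to treat limit $j$ separately or to invoke the directedness axiom or the cofinality restriction $\cf(\theta_k)<\kappa$. The only conceptually delicate point is recognising that the base case carries all the weight: once every successor-step map has bounded range, composition with any further order-preserving map automatically preserves boundedness, because an order-preserving image of an ordinal $\alpha<\dom(g)$ lies strictly below $g(\alpha)$.
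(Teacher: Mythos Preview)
Your proof is correct and takes a genuinely different route from the paper's. The paper argues by induction on $j$: at a successor $j=j'+1$ it factors $f=g\cdot h$ with $g\in{\mathcal F}_{j'j}$ and uses non-cofinality of the \emph{top} step $g$; at a limit $j$ it factors through some $j'\in(i,j)$ and invokes the inductive hypothesis on the \emph{bottom} factor $h\in{\mathcal F}_{ij'}$. You instead factor once, at the very bottom level $i+1$, apply the base case to $f'\in{\mathcal F}_{i,i+1}$, and then observe that composing with any order-preserving $g$ whose domain contains a strict upper bound $\alpha$ for $\rge(f')$ yields $\rge(g\cdot f')\subseteq g(\alpha)<\theta_j$. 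This buys you a uniform argument with no induction and no case split on whether $j$ is a successor or a limit; the paper's approach, by contrast, is slightly longer but perhaps makes more visible that the non-cofinality is renewed at every successor step (which is morally why the interleaving argument in Proposition~\ref{less-than-kappa-closure} works).
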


\begin{proof}[\ref{individual-SMS-maps-are-not-cofinal}] The proof is by
induction on $j$ for each fixed $i$. If $j=i+1$ the result is immediate from 
the definition of ${\mathcal F}_{ii+1}$. (Recall that if ${\mathcal F}_{ii+1}=\set{\id,h_i}$ 
is an almost exact amalgamation pair then 
$\ssup(\rge(h_i))=\theta_i\cup h_i``\theta_i < \theta_{i+1}$.)
If $j=j'+1$ for some $j'>i$
then let $f=g\cdot h$ for some $h\in {\mathcal F}_{ij'}$ and $g\in {\mathcal F}_{j'j}$.
Again by the definition of ${\mathcal F}_{j'j}$ we have that $g$ is not cofinal and hence
neither is $f$. Finally, if $j$ is a limit ordinal then choose any $j'\in(i,j)$; then
$f=g\cdot h$ for some $h\in {\mathcal F}_{ij'}$ and $g\in {\mathcal F}_{j'j}$. By the inductive
hypothesis $h$ is not cofinal, hence neither is $f$. 
\end{proof}

\begin{lemma}\label{singletons-at-preds-of-origins-of-fitting-maps} 
If $S$ is a small $(\kappa^+,1)$-SMS, $0\le i < \zeta$, $f\in {\mathcal F}_{i+1\zeta}$, 
$F\in(\lambda)^{\theta_\zeta}$ and $N$ fits $F\cdot f$
then ${\mathcal F}_{ii+1}=\set{f_{i}}$.\end{lemma}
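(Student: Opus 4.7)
The plan is to argue by contradiction, assuming ${\mathcal F}_{ii+1}$ is not a singleton and showing this is incompatible with $N$ fitting $F\cdot f$.

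First I would observe that by Definition \ref{defn-SMS} the only alternative to ${\mathcal F}_{ii+1}$ being a singleton is that ${\mathcal F}_{ii+1}=\set{\id,h_i}$ is an almost exact amalgamation pair in $(\theta_{i+1})^{\theta_i}$. Unwinding Definition \ref{defn-amalgamation-pair} with $\phi=\theta_{i+1}$ and $\tau=\theta_i$, almost exactness means precisely that $\theta_{i+1}=(\theta_i\cup h_i``\theta_i)+1$. In particular $\theta_{i+1}$ is a successor ordinal, with greatest element $\mu:=\theta_i\cup h_i``\theta_i$.

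Next, since $f\in {\mathcal F}_{i+1\zeta}\subseteq (\theta_\zeta)^{\theta_{i+1}}$ and $F\in(\lambda)^{\theta_\zeta}$ are order-preserving, so is $F\cdot f:\theta_{i+1}\longrightarrow\lambda$, and therefore $F(f(\mu))$ is the maximum of $\rge(F\cdot f)$. By Definition \ref{defn-fits}, the hypothesis that $N$ fits $F\cdot f$ gives $\rge(F\cdot f)=N\cap\lambda$, so $N\cap\lambda$ would have a largest element.

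This contradicts $N$ being a good model. By Definition \ref{defn-good-model}, $N\preq H_\lambda$, and since $\lambda$ is an uncountable regular cardinal (and hence a limit ordinal), $H_\lambda$ satisfies ``every ordinal has a strictly larger ordinal,'' so by elementarity $N\cap\On$, and a fortiori $N\cap\lambda$, contains no maximum. The resulting contradiction forces ${\mathcal F}_{ii+1}=\set{f_i}$. I do not anticipate any real obstacle: the crux is simply noticing that the almost exact amalgamation pair clause constrains $\theta_{i+1}$ to be a successor ordinal, which is immediately at odds with the elementarity of $N$ in $H_\lambda$.
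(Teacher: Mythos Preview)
Your argument is correct and is essentially the paper's own proof: both observe that if ${\mathcal F}_{ii+1}$ were an almost exact amalgamation pair then $\theta_{i+1}$ would be a successor ordinal (the paper writes it explicitly as $\sigma+\xi\cdot 2+1$), forcing $N\cap\lambda$ to have a maximum, which contradicts $N\preq H_\lambda$. Your version is slightly more streamlined in that you extract only the ``successor'' feature of $\theta_{i+1}$ rather than its full form, but the underlying idea is identical.
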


\begin{proof}[\ref{singletons-at-preds-of-origins-of-fitting-maps}] If not 
then $\otp(N\cap \lambda)=\sigma + \xi.2 + 1$ for some
$\xi$. But this is clearly impossible if $(N,\in)\preq
(H_{\lambda},\in)$.\end{proof}
\vskip6pt

Unsurprisingly, a variant of a generalization of the combination of
Velleman and Stanley's ubiquitiously helpful lemmas for simplified morasses
(\cite{Velleman-SM}, Lemma (3.2), and \cite{Velleman-SM}, Theorem (3.9), respectively) 
is very useful here.

\begin{lemma}\label{analogue-of-Vellemans-lemma}  Suppose 
$S=\tup{\tupof{\theta_i}{i\le \zeta}, \tupof{{\mathcal F}_{ij}}{i\le j\le \zeta}}$ 
is a small $(\kappa^+,1)$-SMS, $F\in(\lambda)^{\theta_n}$, $i\le j\le \zeta$, 
$f$, $g\in {\mathcal F}_{ij}$, and $\xi$, $\tau\le\theta_i$. 
If $\ssup(f``\xi)=\ssup(g``\tau)$ or if $j=n$ and 
$\ssup(F\cdot f``\xi)=\ssup(F\cdot g``\tau)$
then $\xi=\tau$ and $f\on \xi=g\on \xi$.\end{lemma}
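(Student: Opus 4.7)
My plan is to induct on $j$, after a preliminary remark that dissolves the second (``$F$ appended'') clause of the hypothesis. \textbf{Reduction.} For any order-preserving $F$ whose domain contains $\theta_j$, the identity $\ssup(F\cdot f``\xi)=\ssup(F\cdot g``\tau)$ forces $\ssup(f``\xi)=\ssup(g``\tau)$. Indeed, set $\alpha=\ssup(f``\xi)$ and $\beta=\ssup(g``\tau)$, WLOG $\alpha\le\beta$, and suppose $\alpha<\beta$. Since $f``\xi\subseteq\theta_j$, $\alpha\le\theta_j$, and $\alpha<\beta\le\theta_j$ forces $\alpha<\theta_j$, so $F(\alpha)$ is defined and $\ssup(F\cdot f``\xi)\le F(\alpha)$. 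On the other hand, $g``\tau$ either has a maximum $m$ with $m\ge\alpha$ (giving $F(m)\in F\cdot g``\tau$ with $F(m)\ge F(\alpha)$ and then $\ssup(F\cdot g``\tau)\ge F(m)+1>F(\alpha)$), or is unbounded in $\beta$ so it contains some $\alpha'\in(\alpha,\beta)$ with $F(\alpha')>F(\alpha)$; either way $\ssup(F\cdot g``\tau)>\ssup(F\cdot f``\xi)$, a contradiction. Applied once with the given $F$, this reduces the second clause to the first; and I shall reuse the same trick below with various morass maps in place of $F$.

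\textbf{Base case} $j=i$: $f=g=\id$ and $\ssup(f``\xi)=\xi$, $\ssup(g``\tau)=\tau$, so the conclusion is immediate.

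\textbf{Successor step} $j=k+1$: Write $f=p\cdot f'$ and $g=q\cdot g'$ with $f',g'\in\mathcal{F}_{ik}$ and $p,q\in\mathcal{F}_{kk+1}$. If $\mathcal{F}_{kk+1}$ is a singleton, or if $p=q$ within an almost exact amalgamation pair $\set{\id,h_k}$, then applying the reduction with $p$ in the role of $F$ yields $\ssup(f'``\xi)=\ssup(g'``\tau)$ and the inductive hypothesis finishes. The only genuinely new case is the mixed one, say $p=\id$ and $q=h_k$, with splitting point $\sigma<\theta_k$. Here the key point is that $\rge(h_k)=\sigma\cup[\theta_k,\theta_k+(\theta_k-\sigma))$, so $\ssup(h_k\cdot g'``\tau)$ is either $\le\sigma$ (precisely when $g'``\tau\subseteq\sigma$) or strictly greater than $\theta_k$; intermediate values are impossible. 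Since $f'``\xi\subseteq\theta_k$ forces $\ssup(f'``\xi)\le\theta_k$, the common value of the two suprema must be $\le\sigma$, so $g'``\tau\subseteq\sigma$ and $h_k\cdot g'``\tau=g'``\tau$. Thus $\ssup(f'``\xi)=\ssup(g'``\tau)$; the inductive hypothesis gives $\xi=\tau$ and $f'\on\xi=g'\on\xi$; and since $g'``\xi\subseteq\sigma$ and $h_k$ fixes $\sigma$ pointwise, $f\on\xi=f'\on\xi=g'\on\xi=h_k\cdot g'\on\xi=g\on\xi$.

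\textbf{Limit step:} If $j$ is a limit, directedness of $S$ at $j$ (applied with $i_0=i_1=i$ and $g_0=f$, $g_1=g$) produces $j'\in[i,j)$, an order-preserving $h\in\mathcal{F}_{j'j}$, and $f',g'\in\mathcal{F}_{ij'}$ with $f=h\cdot f'$ and $g=h\cdot g'$. The reduction with $h$ in place of $F$ then gives $\ssup(f'``\xi)=\ssup(g'``\tau)$, and the inductive hypothesis concludes. The only delicate point in the whole argument is the mixed case in the successor step, where one must exploit the ``gap'' between $\sigma$ and $\theta_k$ inside $\rge(h_k)$ to push the common supremum down below the splitting point and thereby recover an equation of suprema to which the inductive hypothesis applies.
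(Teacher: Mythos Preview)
Your proof is correct and follows essentially the same inductive strategy as the paper's: induction on $j$, factoring through $\mathcal{F}_{kk+1}$ at successors with the amalgamation-pair analysis, directedness at limits, and the observation that composing with an order-preserving map reflects equality of suprema. You simply spell out more carefully the ``Reduction'' step and the mixed case at successors, which the paper handles in a single terse sentence.
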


\begin{proof}[\ref{analogue-of-Vellemans-lemma}] By induction on $j$ for each $i$. 
If $j=i+1$ the lemma is immediate from the two possible cases for the structure of 
${\mathcal F}_{ii+1}$. If $j=i'+1$ for some $i'>i$, note that one can factor $f$,
$g$ as $f=f_1\cdot f_0$ and $g=g_1\cdot g_0$ with $f_0$, $g_0\in {\mathcal F}_{ii'}$ 
and $f_1$, $g_1\in {\mathcal F}_{i'i'+1}$. If ${\mathcal F}_{i'i'+1}$
is a singleton then this instance of the lemma is immediate
from the lemma for $i$ and $i'$. Otherwise, ${\mathcal F}_{i'i'+1}$ is an
amalgamation pair with splitting point $\sigma_{i'}$ and either
$f_1=g_1$, in which case the lemma is again immediate by the induction
hypothesis, or $f_1\ne g_1$ and hence
$\ssup(f_0``\xi)=\ssup(g_0``\tau)\le\sigma_{i'}$ and so the lemma is
once again true. If $j\le \zeta$ is a limit ordinal then there is some
$k\in [i,j)$, $f' \in {\mathcal F}_{ik}$, $g' \in {\mathcal F}_{ik}$ and 
$h \in {\mathcal F}_{kj}$ such that $f=h\cdot f'$ and $g=h\cdot g'$. 
If $\ssup(f``\xi)=\ssup(g``\tau)$ then $\ssup(f'``\xi)=\ssup(g'``\tau)$.
By the inductive hypothesis the lemma is true for $k$, $f'$ and $g'$. 
Hence, applying $h$ on both sides, it is true of $j$, $f$ and $g$. 
If $j=\zeta$ then $\ssup(F\cdot f``\xi)=\ssup(F\cdot g``\tau)$ 
if and only if $\ssup(f``\xi)=\ssup(g``\tau)$, and so the lemma for 
$F\cdot f$ and $F\cdot g$ is true by the lemma for $f$ and $g$. 
$\hphantom{w}$ \end{proof}
\vskip18pt

We now introduce the main forcing notion that will be considered in this paper.
\vskip6pt

\begin{definition}\label{defn-main-forcing-notion} $\Pbb$ is the forcing notion in which
conditions other than $\onebb$ are of the form $p=(S^p,{\mathcal F}^p,{\mathcal N}^p)$ where 
\begin{itemize}
\item
$S^p=\tup{\tupof{\theta^p_i}{i\le \zeta^p}, \tupof{{\mathcal F}^p_{ij}}{i\le
j\le \zeta^p}}$ is a small $(\kappa^+,1)$-SMS, 
\item
${\mathcal F}^p=\setof{(F^p\cdot f, \theta^p_i)}{i\le \zeta^p\sss\&\sss f\in {\mathcal F}_{i\zeta^p}}$
for some $F^p\in(\lambda)^{\theta^p_{n^p}}$,
\item
${\mathcal N}^p$ is a set of very good models such that 
for every $M\in {\mathcal N}^p$ there is a unique 
$(i^M,f^M)\in \bigcup_{j\le \zeta^p}\set{j}\times {\mathcal F}^p_{j\zeta^p}$ 
such that $M$ fits $F^p\cdot f^M$ and
\begin{itemize}
\item[$\circ_1$] $\forall i\le j \le i^M\sss\sss ({\mathcal F}^p_{ij}\subseteq \obar{M})$ 
\item[$\circ_2$] $\forall N\in {\mathcal N}^p\sss\sss \forall\gamma<\delta^N\sss
\forall g\in (\theta_{i^N})^{\gamma}$
\begin{itemize}
\item[]
$(i^N\le i^M \sss\sss\&\sss\sss g\in \obar{N} \Longrightarrow
g\in \obar{M})$
\end{itemize}
\end{itemize}
\end{itemize}
\vskip6pt

We sometimes write instead, more informally, $p=(S^p,F^p,{\mathcal N}^p)$, 
as clearly ${\mathcal F}^p$ is recoverable from $F^p$ and $S^p$.

We take $\onebb$ to be the empty condition $(\emptyset,\emptyset,\emptyset)$ and, 
for numerological reasons, formally take $\zeta^{\onebb} = -1$.
\vskip6pt

The ordering is given by $q\le p$ if ($p=\onebb$ or)
\begin{itemize}
\item $\zeta^p\le \zeta^q$, 
\item there is some
$k:\zeta^p+1\longrightarrow_{\hbox{\sevenrm o.p.}} \zeta^q+1$ such that 
\begin{itemize}
\item[$\bullet$] for all $i\le \zeta^p$
one has $\theta_i^p=\theta^q_{k(i)}$, 
\item[$\bullet$] for all $i\le j\le \zeta^p$ one has
${\mathcal F}^p_{ij}\subseteq {\mathcal F}^q_{k(i)k(j)}$,
\item[$\bullet$] if $k(i+1)=k(i)+1$ then ${\mathcal F}^p_{ii+1}={\mathcal F}^q_{k(i)k(i)+1}$,
\end{itemize}
\item there is some $f^{pq}\in {\mathcal F}^q_{k(\zeta^p)\zeta^q}$ 
such that $F^p=F^q\cdot f^{pq}$,
\item ${\mathcal N}^p\subseteq {\mathcal N}^q$, 
\item if $N\in {\mathcal N}^q$ is witnessed by 
$(k(i),f^{pq}\cdot g)$ for some $g\in {\mathcal F}^p_{i\zeta^p}$,
then $N\in {\mathcal N}^p$ (and this is witnessed by $(i,g)$ since
$F^q\cdot f^{pq}\cdot g=F^p\cdot g$). 
\end{itemize}
\end{definition}

\vskip12pt

The clauses $\circ_1$ and $\circ_2$ are succinct and straightforward.
However it is useful, for the proofs below, to unpack them into a
sequence of conditions which involve the small SMS structure more
explicitly. 

\begin{lemma}\label{unpacking-the-circ-conditions} Suppose 
(as in Definition (\ref{defn-main-forcing-notion})) that 
$p=(S^p,{\mathcal F}^p,{\mathcal N}^p)$,\break  where 
$S^p=\tup{\tupof{\theta^p_i}{i\le \zeta^p}, \tupof{{\mathcal F}^p_{ij}}{i\le
j\le \zeta^p}}$ is a small $(\kappa^+,1)$-SMS, 
${\mathcal F}^p=
\setof{(F^p\cdot f, \theta^p_i)}{i\le \zeta^p\sss\&\sss f\in {\mathcal F}_{i\zeta^p}}$ for
some $F^p:\theta^p_{\zeta^p}\longrightarrow_{\sevenrm o.p.}\lambda$,
and ${\mathcal N}^p$ is a collection of very good models such that 
for all $M\in {\mathcal N}^p$ 
there is some (unique) $(i^M,f^M)\in \bigcup_{j\le \zeta^p}\set{j}\times {\mathcal F}^p_{j\zeta^p}$
such that $M$ fits $F^p\cdot f^M$. 

Then $p\in \Pbb$, 
{\it i.e.\/}, $\circ_1$ and $\circ_2$ hold, if and only if  the following conditions hold:
\begin{itemize}
\item[$\bullet_1$] $\forall i\le j < i^M \sss 
(\theta^p_i < \delta^M \sss\&\sss {\mathcal F}^p_{ij}\subseteq M )$
\item[$\bullet_2$] $\forall i< i^M\sss \forall g\in {\mathcal F}_{ii^M}
\sss (F^p\cdot f^M\cdot g\in M)$
\item[$\bullet_3$] $\forall N\in {\mathcal N}^p\sss \sss ( i^N\le i^M
\Longrightarrow ((\kappa^+)^{<\kappa^+})^N \subseteq ((\kappa^+)^{<\kappa^+})^M )$
\item[$\bullet_4$] $\forall N\in {\mathcal N}^p\sss \forall\gamma<\delta^N\sss
\forall g\in (\theta_{i^N})^{\gamma} (i^N=i^M \Longrightarrow$
\begin{itemize}
\item[] $(F^p\cdot f^N\cdot g \in N \Longleftrightarrow F^p\cdot f^M\cdot g \in M) )$
\end{itemize}
\item[$\bullet_5$] $\forall N\in {\mathcal N}^p\sss \forall\gamma<\delta^N\sss
\forall g\in (\theta_{i^N})^{\gamma} \sss\sss (i^N<i^M \Longrightarrow $
\begin{itemize}
\item[] $(F^p\cdot f^N\cdot g \in N \Longrightarrow g \in M) )$ 
\end{itemize}
\end{itemize}
\end{lemma}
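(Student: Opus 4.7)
The plan is to unpack the compact conditions $\circ_1$ and $\circ_2$ using two basic translations between $\obar{M}$ and $M$. First, the inverse collapse $\pi^M$ is the identity on ordinals below $\delta^M$, so any function whose domain and range lie strictly below $\delta^M$ belongs to $\obar{M}$ iff it belongs to $M$. Second, the fitting condition gives $\pi^M \on \theta^p_{i^M} = F^p \cdot f^M$, so Lemma \ref{tr-coll-equiv-of-maps-in-good-models} tells us that for any $g \in (\theta^p_{i^M})^\gamma$ with $\gamma < \delta^M$, one has $g \in \obar{M}$ iff $F^p \cdot f^M \cdot g \in M$.

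To derive the five bullets from $\circ_1$ and $\circ_2$, I would split each compact condition by the position of its index. For $\circ_1$ at $j < i^M$: because $\id_{\theta^p_j} \in {\mathcal F}^p_{jj} \subseteq \obar{M}$ and the collapsed image $\pi^M(\id_{\theta^p_j})$ cannot lie in $M$ unless $\theta^p_j \in M$, the fact $\delta^M \notin M$ forces $\theta^p_j < \delta^M$; then each $f \in {\mathcal F}^p_{ij}$ sits in the identity regime of $\pi^M$ and so lies in $M$, giving $\bullet_1$. Applying $\circ_1$ at $j = i^M$ together with the fitting translation directly yields $\bullet_2$. For $\circ_2$ in the case $i^N = i^M$, interchanging $M$ and $N$ supplies the biconditional, and translating each side via the fitting condition produces $\bullet_4$. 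The case $i^N < i^M$ uses the $N$-side fitting translation and invokes $\bullet_1$ (now available) to secure $\theta^p_{i^N} < \delta^M$, reducing ``$g \in \obar{M}$'' to ``$g \in M$''---this is $\bullet_5$. Finally, $\bullet_3$ follows similarly: any $f \in ((\kappa^+)^{<\kappa^+})^N$ has domain $<\delta^N$ and range in $\delta^N \subseteq \theta^p_{i^N}$, so it is covered by $\circ_2$ and the conclusion comes out as in $\bullet_5$.

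The converse simply reverses each step: $\bullet_1$ (via the identity regime) combined with $\bullet_2$ (via the fitting translation) reassembles $\circ_1$, while $\bullet_3$--$\bullet_5$ similarly reassemble $\circ_2$. The main obstacle will be careful bookkeeping at the ``seam'' $j = i^M$, where $\theta^p_{i^M}$ straddles $\delta^M$: the ordinals $[0, \delta^M)$ lie in the identity regime of $\pi^M$, while $[\delta^M, \theta^p_{i^M})$ correspond via $\pi^M$ to the ``upper'' portion $\set{\kappa^+} \cup (M \cap (\kappa^+, \lambda))$. One has to verify that the $\bullet_i$ faithfully encode $\obar{M}$-membership in both regimes---in particular that the $\kappa^+$-formulated $\bullet_3$ matches the $\theta^p_{i^N}$-formulated hypothesis of $\circ_2$---which is the routine but slightly delicate core of the argument.
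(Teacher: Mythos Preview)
Your proposal is correct and follows essentially the same approach as the paper. The paper's proof is more compressed: it introduces intermediate conditions $\bullet'_1$--$\bullet'_5$ (the $\obar{M}$-versions of $\bullet_1$--$\bullet_5$), asserts that $\bullet_i \Longleftrightarrow \bullet'_i$ is ``immediate'' from the facts $\pi^M\on\delta^M=\id$, $\pi^M(\delta^M)=\kappa^+$, and $\theta_i<\delta^M$ for $i<i^M$, and then observes that $\circ_1$ is $\bullet'_1\wedge\bullet'_2$ while $\circ_2$ is $\bullet'_3\wedge\bullet'_4\wedge\bullet'_5$. Your version unpacks exactly these translations (your ``identity regime'' and ``fitting translation'') and carries out the case split explicitly, including the justification for $\theta^p_j<\delta^M$ that the paper simply states; the content is the same.
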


\vskip12pt

\begin{proof}[\ref{unpacking-the-circ-conditions}]  For $M\in {\mathcal N}^p$ 
we have $\pi^M\on \On = F^p\cdot f^M$. 
Note that $\pi^M\on\delta^M=\id$, $\pi^M(\delta^M)=\kappa^+$ and that
$\forall i< i^M \sss\theta_i<\delta^M$. Observe that it is now immediate that 
for $i\in \set{1,2,3,4,5}$ we have that $\bullet_i$ holds 
if and only if $\bullet'_i$ does, where the $\bullet'_i$ are given by
\begin{itemize}
\item[$\bullet'_1$] $\forall i\le j <i^M\sss {\mathcal F}^p_{ij}\subseteq \obar{M}$ 
\item[$\bullet'_2$] $\forall i <i^M\sss {\mathcal F}^p_{ii^M}\subseteq \obar{M}$ 
\item[$\bullet'_3$] $\forall N\in {\mathcal N}^p\sss \forall\gamma<\delta^N\sss
\forall g\in (\delta^N)^{\gamma} \sss\sss (i^N\le i^M 
\Longrightarrow $
\begin{itemize}
\item[] $(g\in \obar{N} \Longleftrightarrow g\in \obar{M}))$
\end{itemize}
\item[$\bullet'_4$] $\forall N\in {\mathcal N}^p\sss \forall\gamma<\delta^N\sss
\forall g\in (\theta_{i^N})^{\gamma} \sss\sss (i^N=i^M \Longrightarrow $
\begin{itemize}
\item[] $(g\in \obar{N} \Longleftrightarrow g\in \obar{M}))$
\end{itemize}
\item[$\bullet'_5$] $\forall N\in {\mathcal N}^p\sss \forall\gamma<\delta^N\sss
\forall g\in (\theta_{i^N})^{\gamma} \sss\sss (i^N<i^M \Longrightarrow$ 
\begin{itemize}
\item[] $(g\in \obar{N} \Longrightarrow g\in \obar{M}))$
\end{itemize}
\end{itemize}

The conclusion now follows as $\circ_1$ is the conjunction of
$\bullet'_1$ and $\bullet'_2$ and $\circ_2$ is the conjuction on 
$\bullet'_3$, $\bullet'_4$ and $\bullet'_5$. \end{proof}

Before proceeding we give a useful lemma showing that for
a condition $p\in \Pbb$ the relationship of one member of the 
collection of good models ${\mathcal N}^p$ being an element of another
behaves well with respect to the SMS structure $S^p$ of $p$.\vskip6pt

\begin{lemma}\label{good-models-behave-well-with-SMS-structure} Suppose  
$p=(M,{\mathcal F},{\mathcal N})\in \Pbb$ and write $\zeta$ for $\zeta^p$.
If $N$, $N'\in {\mathcal N}$ with $i_{N'} <i_N$ 
and $N'\in N$ then $\exists f\in {\mathcal F}_{i_{N'}i_N} \sss 
(f_{N'} = f_N\cdot f )$.\end{lemma}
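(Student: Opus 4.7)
The plan is to leverage the fact that $N'\in N$ forces a containment of $\lambda$-traces (via Lemma~\ref{in-implies-subset-for-good-models}), which translates into a containment of the ranges of the fitting maps $f^{N'}$ and $f^N$. This yields a uniquely determined order-preserving factor map $f:\theta_{i^{N'}}\to\theta_{i^N}$ with $f^N\cdot f=f^{N'}$, and the job then reduces to showing that this abstractly-defined $f$ actually lies in ${\mathcal F}^p_{i^{N'}i^N}$. I will identify $f$ with the factor produced by the SMS composition axiom, using Lemma~\ref{analogue-of-Vellemans-lemma} to pin it down.

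First I would verify the hypothesis of Lemma~\ref{in-implies-subset-for-good-models}. Because $N'$ fits $F^p\cdot f^{N'}$, we have $\otp(N'\cap\lambda)=\theta_{i^{N'}}<\kappa^+$. Since $N'\in N$ and $N\preq H_\lambda$, the set $N'\cap\lambda$ (equivalently the ordinals in $N'$) is an element of $N$, and its order type is therefore an element of $N\cap\kappa^+=\delta^N$. Thus $\otp(N'\cap\lambda)<\delta^N$, so Lemma~\ref{in-implies-subset-for-good-models} applies and gives $N'\cap\lambda\subseteq N\cap\lambda$. Reading this through the fitting maps, $\rge(F^p\cdot f^{N'})\subseteq\rge(F^p\cdot f^N)$, and since $F^p$ is order-preserving (hence injective), $\rge(f^{N'})\subseteq\rge(f^N)$. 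Define $f:=(f^N)^{-1}\cdot f^{N'}$; this is the unique order-preserving map $\theta_{i^{N'}}\to\theta_{i^N}$ satisfying $f^N\cdot f=f^{N'}$.

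Next I would bring in the SMS structure. By the composition axiom in Definition~\ref{defn-SMS},
\[ {\mathcal F}^p_{i^{N'}\zeta}=\{\,g\cdot h : g\in {\mathcal F}^p_{i^N\zeta},\ h\in {\mathcal F}^p_{i^{N'}i^N}\,\}, \]
so $f^{N'}=g\cdot h$ for some such $g$ and $h$. I want to show $h=f$; then $f\in{\mathcal F}^p_{i^{N'}i^N}$ and we are done. For each $\beta<\theta_{i^{N'}}$, the equality $f^N(f(\beta))=f^{N'}(\beta)=g(h(\beta))$ gives, on taking $\xi:=f(\beta)+1\le\theta_{i^N}$ and $\tau:=h(\beta)+1\le\theta_{i^N}$,
\[ \ssup(f^N{}``\xi)=f^N(f(\beta))+1=g(h(\beta))+1=\ssup(g{}``\tau). \]
Applying Lemma~\ref{analogue-of-Vellemans-lemma} to $f^N,g\in{\mathcal F}^p_{i^N\zeta}$ yields $\xi=\tau$, i.e.\ $f(\beta)=h(\beta)$. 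Since $\beta$ was arbitrary, $f=h\in{\mathcal F}^p_{i^{N'}i^N}$, completing the proof.

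The only mildly delicate point is the first step — checking that $\otp(N'\cap\lambda)$ really sits below $\delta^N$ so that Lemma~\ref{in-implies-subset-for-good-models} is available; everything after that is a calibrated application of the composition axiom together with the analogue of Velleman's lemma, used pointwise to force equality of the $\theta_{i^N}$-side factors rather than of the top-level factors (which is why we do not need $f^N=g$, only $f=h$).
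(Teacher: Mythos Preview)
Your proof is correct and follows essentially the same route as the paper's: obtain $\rge(f^{N'})\subseteq\rge(f^N)$ via Lemma~\ref{in-implies-subset-for-good-models}, factor $f^{N'}$ through level $i^N$ using the SMS composition axiom, and then invoke Lemma~\ref{analogue-of-Vellemans-lemma} to identify the factor. The only cosmetic differences are that the paper justifies $\otp(N'\cap\lambda)<\delta^N$ by citing $\bullet_1$ (which gives $\theta_{i^{N'}}<\delta^N$ directly from $i^{N'}<i^N$) rather than by your elementarity argument, and it applies Lemma~\ref{analogue-of-Vellemans-lemma} in one stroke at $\ssup(\rge(f))$ rather than pointwise.
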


\begin{proof}[\ref{good-models-behave-well-with-SMS-structure}] By 
$\bullet_1$ for $p$ applied to $\theta_{i_N'}$ and $N$ one has
$\otp(N'\cap\lambda)<\delta^N$. By hypothesis $N'\in N$, so, by Lemma (1.7), 
$\rge(F\cdot f_{N'}) = N'\cap\lambda \subseteq N\cap\lambda$
$= \rge(F\cdot f_{N})$. As $F$ is order preserving
$\rge(f_{N'})\subseteq \rge(f_N)$. By the last property of ${\mathcal F}_{i_{N'}\zeta}$ given in
Definition (\ref{defn-main-forcing-notion}), we may factor 
$f_{N'}$ as $h\cdot f$ where $h\in {\mathcal F}_{i_N\zeta}$ and
$f\in {\mathcal F}_{i_{N'}i_N}$. Then $h\on \ssup(\rge(f)) =
f_N\on \ssup(\rge(f))$, by Lemma (\ref{analogue-of-Vellemans-lemma}), 
so $f_{N'} = f_N\cdot f$.\end{proof}
\vskip12pt

\vskip18pt

\section{Preservation of $\kappa^+$.}\label{preservation-of-kappaplus-section}\vskip12pt

\begin{proposition}\label{P-is-proper} $\Pbb$ is proper and hence forcing with
it preserves $\kappa^+$. \end{proposition}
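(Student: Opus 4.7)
The plan is to apply Proposition~\ref{properness-proof-that-lambda-is-preserved} with $\lambda=\kappa^+$, taking $\mathcal{U}$ to be the collection of very good models $N$ with $\kappa\subseteq N$; these are unbounded in $[H_\chi]^{<\kappa^+}$ by Löwenheim--Skolem closure and Lemma~\ref{when-truncations-are-elementary}. Fixing such an $N$ with $p,\Pbb\in N$, I must exhibit an $(N,\Pbb)$-generic extension $p^*\le p$.

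I would build $p^*$ by \emph{topping off $p$ with $N$}. Set $\zeta^{p^*}=\zeta^p+1$, keep $\theta^{p^*}_i=\theta^p_i$ for $i\le\zeta^p$, and put $\theta^{p^*}_{\zeta^{p^*}}=\otp(N\cap\lambda)$ (which lies in $(\theta^p_{\zeta^p},\kappa^+)$ since $\delta^N\subseteq N\cap\lambda$ and $\theta^p_{\zeta^p}\in N$). Take $F^{p^*}:\theta^{p^*}_{\zeta^{p^*}}\to N\cap\lambda$ to be the order-isomorphism, set ${\mathcal F}^{p^*}_{\zeta^p\zeta^{p^*}}=\set{f^\dagger}$ where $f^\dagger$ is the unique order-preserving map satisfying $F^{p^*}\cdot f^\dagger=F^p$ (well defined since $p\in N$ gives $\rge(F^p)\subseteq N\cap\lambda$, and non-cofinal in $\theta^{p^*}_{\zeta^{p^*}}$ since $\ssup(\rge(F^p))+1\in N\cap\lambda$), and close ${\mathcal F}^{p^*}$ under composition. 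Finally, put ${\mathcal N}^{p^*}={\mathcal N}^p\cup\set{N}$ with $N$ witnessed by $(\zeta^{p^*},\id)$. Verifying $p^*\in\Pbb$ reduces, via Lemma~\ref{unpacking-the-circ-conditions}, to checking conditions $\bullet_1$--$\bullet_5$ for pairs involving $N$. These all follow from $p\in N$ (so each $\theta^p_i$ and each ${\mathcal F}^p_{ij}$ lies in $N$), elementarity, and the observation that ${\mathcal N}^p\subseteq N$ (since $\card{{\mathcal N}^p}<\kappa\subseteq N$), combined with Lemmas~\ref{in-implies-subset-for-good-models} and~\ref{maps-in-element-small-model-are-in-larger-good-model}.

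For $(N,\Pbb)$-genericity, let $D\in N$ be dense in $\Pbb$ and $q\le p^*$. Let $i^N_q$, $f^N_q$ witness $N\in{\mathcal N}^q$. Define the \emph{$N$-restriction}
\[
q\rest N\;=\;\big(S^q\rest (i^N_q+1),\; F^q\cdot f^N_q,\; {\mathcal N}^q\cap N\big).
\]
The clauses $\bullet_1$--$\bullet_5$ of $q$ for $N$ say precisely what is needed to ensure $q\rest N\in\Pbb\cap N$ and $q\rest N\le p$. Density of $D$ relativized to $N$ then yields $r\in D\cap N$ with $r\le q\rest N$. A common extension $s\le r,q$ is then built by gluing $r$'s morass and side conditions (for the levels corresponding to $S^q$ at and below $i^N_q$, using the embedding $k_r$ witnessing $r\le q\rest N$) onto $q$'s morass and side conditions (for levels strictly above $i^N_q$). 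The directedness and factorization axioms of Definition~\ref{defn-SMS}, combined with Lemma~\ref{analogue-of-Vellemans-lemma}, ensure that the new morass maps are uniquely determined and that no two are forced to coincide.

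The main obstacle is the compatibility check: verifying that the glued structure is a genuine member of $\Pbb$, in particular that $\bullet_1$--$\bullet_5$ hold for every new pair $(M,M')$ with $M\in{\mathcal N}^r$ and $M'\in{\mathcal N}^q\setminus N$. The key is a sandwich principle: $M\in{\mathcal N}^r\subseteq N$ while $M'$ has $i^{M'}\ge i^N_q$ in $q$, so the containments required between $M$ and $M'$ follow from those already in force between $N$ and $M'$ inside $q$, propagated down to $M$ via Lemma~\ref{good-models-behave-well-with-SMS-structure} together with Lemmas~\ref{in-implies-subset-for-good-models} and~\ref{maps-in-element-small-model-are-in-larger-good-model}.
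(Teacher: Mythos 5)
Your construction of $p^*$ (topping off $p$ with $N$ at a new top level $\theta^*=\otp(N\cap\lambda)$, with $F^{p^*}$ the inverse collapse and $N$ witnessed by $(\zeta^{p^*},\id)$) matches the paper's, and the overall architecture of the genericity argument (restrict $q$ to $N$, extend inside $N$, glue) is also the paper's. But the restriction operation as you define it has a genuine flaw. You set $q\rest N=\big(S^q\rest(i^N_q+1),\,F^q\cdot f^N_q,\,{\mathcal N}^q\cap N\big)$, so the top level of $q\rest N$ is $\theta^q_{i^N_q}=\otp(N\cap\lambda)$. This ordinal lies in $(\delta^N,\kappa^+)$ and hence is \emph{not} an element of $N$, so $q\rest N\notin N$ and you cannot relativize the density of $D$ to $N$ to find $r\le q\rest N$ there. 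Worse, no $r\in N\cap\Pbb$ can extend $q\rest N$ at all: extension preserves levels, so any such $r$ would have $\theta^r_{\zeta^r}\ge\otp(N\cap\lambda)>\delta^N$, while every condition in $N$ has $\theta^r_{\zeta^r}\in N\cap\kappa^+=\delta^N$. The step ``density of $D$ relativized to $N$ yields $r\in D\cap N$ with $r\le q\rest N$'' therefore fails outright.

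The paper's fix is to cut one level \emph{below} the level of $N$: writing $m^*$ for the level of $q$ with $\theta^q_{m^*}=\theta^*$, one shows $m^*=0$ or $m^*=m+1$ is a successor, notes (Lemma \ref{singletons-at-preds-of-origins-of-fitting-maps}) that ${\mathcal F}^q_{mm+1}=\set{f^q_m}$ is a singleton, and takes $q\on N$ to have top level $\theta^q_m<\delta^N$ with top map $F^*\cdot f^q_m$; Subclaim \ref{maps-in-q-restr-N-are-exactly-the-maps-in-q-intersect-N} then shows this really lands in $N$. This is also where the hypothesis you omitted is essential: the paper demands $\cf(N\cap\lambda)=\kappa$ when choosing $N$, precisely so that $\theta^*$ cannot occur as a \emph{limit} level of any $S^q$ (limit levels of a small SMS have cofinality $<\kappa$), which is what forces $m^*$ to be $0$ or a successor and makes the cut-one-below operation available. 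Without that hypothesis there may be no predecessor level to cut at. A secondary point: your justification of $\bullet_5$ for $p^*$ via ``${\mathcal N}^p\subseteq N$ since $\card{{\mathcal N}^p}<\kappa$'' appeals to a cardinality bound that is not part of Definition \ref{defn-main-forcing-notion}; the paper instead builds the needed inclusion $\setof{g}{M\in{\mathcal N}^p,\ \gamma<\delta^M,\ F^p\cdot f^M\cdot g\in M}\subseteq N$ directly into the choice of $N$. The gluing step you sketch is essentially Claim \ref{there-is-something-below-both-s-and-q}, but note that it hinges on the connecting singleton map $f^r_{\zeta^s}=f_N^{-1}\cdot(F^q)^{-1}\cdot F^s$ from the top of $s$ up to the level of $N$ in $q$, which only exists once the restriction has been set up at the correct (predecessor) level.
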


\begin{proof}[\ref{P-is-proper}]  Let $x\in H_\chi$
and $p\in\Pbb$. Suppose $N$ is a very good model with
$x$, $p\in N$, $\setof{g\in (\theta^p_{i^M})^{\gamma}}{M\in {\mathcal N}^p 
\sss\&\sss \gamma<\delta^M \sss\&\sss F^p\cdot f^M\cdot g\in M } \subseteq N$, and  $\cf(N\cap \lambda)=\kappa$.
(It is easy to find such an $N$ by the downward Lowenheim-Skolem theorem.) 
 
As $\theta^p_{\zeta^p}\in N\cap\kappa^+$ one has that
$\theta^p_{\zeta^p}\subseteq N$. Since $F^p\in N$ we have that
$F^p``\theta^p_{\zeta^p}\subseteq N$. 

Let $\theta^*=\otp(N\cap\lambda)$, let $F^*$ be the inverse of the
transitive collapse of $N\cap\lambda$ and let $f^*={F^*}^{-1}\cdot F^p$. 
Now define $p^*=(S^*,{\mathcal F}^*,{\mathcal N}^*)$ by 
$S^*=\tup{ \tupof{ \theta^p_{i} }{ i\le \zeta^p }\concat \theta^*,
\tupof{ {\mathcal F}^p_{ij} }{ i\le j\le \zeta^p }\concat 
\tupof{ \setof{ f^*\cdot f }{ f\in {\mathcal F}^p_{i\zeta^p} } }{i\le \zeta^p} }$, 
${\mathcal F}^*=\setof{ (F^*\cdot f, \theta^{p^*}_i) }{ 
i\le \zeta^p+1 \sss\&\sss f\in {\mathcal F}^*_{i\zeta^p+1} }$ and 
${\mathcal N}^*={\mathcal N}\cup \set{N}$. 
\vskip6pt

\begin{lemma}\label{pstar-is-a-condition}  $p^*\in \Pbb$ \end{lemma}

\begin{proof}[\ref{pstar-is-a-condition}] The witnessing pair for $N$ is $(\zeta^{p^*},\id)$. 
The only thing to check is that $N$ satisfies $\bullet_1$ - $\bullet_5$.  

$\bullet_1$: It is immediate that ${\mathcal F}^*_{ij}\subseteq N$ for
all $i\le j\le \zeta^p$ as ${\mathcal F}^*_{ij}={\mathcal F}^p_{ij}$ for all $i\le j\le \zeta^p$
and ${\mathcal F}^p_{ij}\subseteq N$ since $p\in N$, $\zeta^p$ and each ${\mathcal
F}^p_{ij}$ has size $\lk$ and ${}^{<\kappa}N \subseteq N$.

$\bullet_2$: It is also immediate that $F^*\cdot f^*\cdot g$,
$=F^p\cdot g$, $\in N$ for all $g\in {\mathcal F}_{i\zeta^p}$ as for each $g\in {\mathcal
F}_{i\zeta^p}$ one has $g\in N$, as mentioned in the proof of $\bullet_1$, and $F^p\in N$ (since ${\mathcal
F}^p\in N$ has size $\lk$ and $^{<\kappa}N\subseteq N$, and $F^p=F^p\cdot \id\in {\mathcal F}^p$). 

$\bullet_3$ - $\bullet_5$: Lemma (\ref{maps-in-element-small-model-are-in-larger-good-model}) 
gives $\bullet_3$, $\bullet_4$ is vacuously
true, and $\bullet_5$ is true by the construction of $N$.
\end{proof} 
\vskip8pt

\begin{lemma}\label{pstar-is-NP-generic} $p^*$ is $(N,\Pbb)$-generic. \end{lemma}

\begin{proof}[\ref{pstar-is-NP-generic}] Let $q\le p^*$, and let $m^*\le \zeta^{q}$ be such that 
$\theta^q_{m^*} = \theta^*$. 

By the definition of small SMS we have two cases: $m^*=0$ or $m^*$ is a successor ordinal. 
For if $m^*$ were to be a limit ordinal we would have that $\cf(\theta^*)<\kappa$, contradicting 
the fact that $\cf(N\cap \lambda)=\kappa$. 

In the former case we take $q\on N= \onebb$.  Clearly in the case $q\on N\in \Pbb$. 

Now suppose (until the end of Claim (\ref{q-restricted-to-N-is-a-condition}))
that we are in the other case, so that $m^* =  m +1$ for some $m< \zeta^{q}$. 
In this case $\theta^q_{m+1}=\theta^*$.
Note, as remarked in Lemma (\ref{singletons-at-preds-of-origins-of-fitting-maps}), 
that we must have that ${\mathcal F}^q_{mm+1}$ is
the singleton $\set{f^q_{m}}$.

Define $q\on N$ as follows. Let $S^{q\on N}=
\tup{\tupof{\theta^q_i}{i\le m},\tupof{{\mathcal F}^q_{ij}}{i\le j\le m}}$.
Let $F^{q\on N}= F^*\cdot f^q_{m}$ and ${\mathcal F}^{q\on N}=
\setof{(F^*\cdot f^q_m \cdot f,\theta^q_i)}{i\le m \sss\&\sss 
f\in {\mathcal F}^q_{im}}$. Let ${\mathcal N}^{q\on N}={\mathcal N}^q \cap N$.  Set $q\on
N=(S^{q\on N},{\mathcal F}^{q\on N},{\mathcal N}^{q\on N})$.

\begin{claim}\label{q-restricted-to-N-is-a-condition} $q\on N\in\Pbb$. \end{claim}

\begin{proof}[\ref{q-restricted-to-N-is-a-condition}] We start by showing that the `${\mathcal F}$'-component 
of $q\on N$ is exactly the intersection of the `${\mathcal F}$'-component of $q$ with $N$.

\begin{subclaim}\label{maps-in-q-restr-N-are-exactly-the-maps-in-q-intersect-N} 
${\mathcal F}^{q\on N}= {\mathcal F}^q\cap N$.\end{subclaim}

\begin{proof}[\ref{maps-in-q-restr-N-are-exactly-the-maps-in-q-intersect-N} ] 
In one direction, if $i\le m$ and $f\in {\mathcal F}^q_{im}$ one has
that
$$F^{q\on N} \cdot f = F^* \cdot f^q_m \cdot f = F^q \cdot f^{p^*q} \cdot
f^q_m \cdot f,$$ where $(m+1,f^{p^*q})$ witnesses that $N\in {\mathcal N}^q$.
Consequently $F^{q\on N} \cdot f$ is of the form $F^q\cdot g$ for some $g\in
{\mathcal F}^q_{i\zeta^q}$ and $F^{q\on N} \cdot f\in N$ (by $\bullet_2$ for $N$ and
$q$). Thus ${\mathcal F}^{q\on N}\subseteq {\mathcal F}^q\cap N$.

In the other direction, suppose $i\le \zeta^q$, $g\in {\mathcal F}^q_{i\zeta^q}$ and
$(F^q\cdot g,\theta_i)\in N$. Then $i\le m$, since $\theta_i\in N$, and so $F^q\cdot
g``\theta_i\subseteq N$. Write $g$ as $g= l\cdot f^q_{m}\cdot f$ where $f\in
{\mathcal F}^q_{im}$ and $l\in {\mathcal F}^q_{m+1\zeta^q}$. Then $F^q\cdot l\on
(f^q_m \cdot f``\theta^q_i) = F^q\cdot f_N\on (f^q_m \cdot f``\theta^q_i)$,
where $(m+1,f_N)$ witnesses that $N\in {\mathcal N}^q$ (and hence $f_N$ is the map
$f^{p^*q}$), by Lemma (1.10).  Thus $F^q\cdot g = F^q\cdot f^{p^*q}\cdot
f^q_m \cdot f = F^{*}\cdot f^q_m\cdot f = F^{q\on N}\cdot f$.  Hence ${\mathcal
F}^q\cap N\subseteq {\mathcal F}^{q\on N}$.
\end{proof} 
\vskip6pt

Suppose $N'\in {\mathcal N}^q\cap N$.  Then, by Lemma (2.6), 
$f_{N'}=f_N\cdot f^q_m\cdot f$ for some
$f\in {\mathcal F}^q_{i_{N'}m}$.  Recalling that $f_{N}=f^{p^*q}$ and that $F^{q\on
N}=F^q\cdot f^{p^*q}\cdot f^q_m$, one thus has that $F^q\cdot f_{N'}= F^{q\on
N}\cdot f$. Hence $(i,f)$ is the (unique) witness that $N'\in {\mathcal N}^{q\on
N}$. Consequently $\bullet_1$ and $\bullet_2$ for $q\on N$ are immediate from these
properties for $q$. Clearly, $\bullet_3$ - $\bullet_5$ hold since they do for $q$.

Thus we have shown that $q\on N\in \Pbb$.\end{proof}
\vskip8pt 

Now let $s=(S^s,{\mathcal F}^s,{\mathcal N}^s)\in N\cap \Pbb$ be such that 
$s\le q\on N$. 

\begin{claim}\label{there-is-something-below-both-s-and-q} 
There is some $r\in\Pbb$ with $r\le q$, $s$. \end{claim}

\begin{proof}[\ref{there-is-something-below-both-s-and-q} ] Let $m=\zeta^{q\on N}$. Let
$k:m+1\longrightarrow \zeta^s+1$ witness that $s\le q\on N$. Extend $k$ so that
$\dom(k) =\zeta^q+1$ by setting $k(m+1+i)=\zeta^s+1+i$ for $i\in \otp((m, \zeta^q])$.

Set $\zeta^r=\zeta^s+ \otp((m, \zeta^q])$ and $\theta^r_i=\theta^s_i$ for $i\le \zeta^s$ and
$\theta^r_{\zeta^s+1+i}=\theta^q_{m+1+i}$ for $i\in \otp((m, \zeta^q])$.
The sequence $\tupof{\theta^r_i}{i<\zeta^r}$ is increasing because
$\theta_{\zeta^s}\in N$ and hence
$\theta_{\zeta^s}<\delta^N<\otp(N\cap\lambda)=\theta^q_{m+1}$.

Let $F^q\cdot f_N$ be the inverse of the transitive collapse of 
$\otp(N\cap \lambda)$, where $(m+1,f_N)\in {\mathcal F}^q_{m+1\zeta^q}$ witnesses
that $N\in {\mathcal N}^q$. 

Again, as $s\in N$ one has that $F^s\in N$ and hence
$F^s``\theta_{\zeta^s}\subseteq N$. Thus one can make the following
definition. 

For $i\le j\le \zeta^s$ let ${\mathcal F}^r_{ij}={\mathcal F}^s_{ij}$. Let 
${\mathcal F}^r_{\zeta^s\zeta^s+1} = \set{ f_N^{-1}\cdot ({F^q})^{-1}\cdot F^s}$.
Hence, in the notation for small SMSs from Definition (\ref{defn-SMS}), 
$f^r_{\zeta^s} = f_N^{-1}\cdot ({F^q})^{-1}\cdot F^s$.
For $1\le i\le j\le \otp((m, \zeta^q])$  let ${\mathcal F}^r_{\zeta^s+i\zeta^s+j}={\mathcal F}^q_{ij}$.
Finally, for $i\le j\le \zeta^r$ with $i\le \zeta^s$ and $j>\zeta^s$
set ${\mathcal F}^r_{ij} = \setof{ f \cdot f^r_{\zeta^s} \cdot g }{ 
g\in {\mathcal F}^r_{i\zeta^s} \sss\&\sss f\in {\mathcal F}^r_{\zeta^s+1j} }$.

Then $S^r= \tup{\tupof{\theta^r_i}{i\le \zeta^r}, 
\tupof{{\mathcal F}^r_{ij}}{i\le j\le \zeta^r}}$ is a small SMS. 

Next, set $F^r=F^q$ and ${\mathcal F}^r=
\setof{(F^r\cdot f, \theta_i)}{i\le \zeta^r\sss\&\sss f\in {\mathcal F}_{i\zeta^r}}$.

Lastly, set ${\mathcal N}^r={\mathcal N}^s\cup{\mathcal N}^q$. 

In order to finish the proof we must show that $\bullet_1$ - $\bullet_5$ hold
for $r$. If $M\in {\mathcal N}^s$ then $\bullet_1$ - $\bullet_5$ hold for $r$ since
they do for $s$. So suppose $M\in {\mathcal N}^q\setminus {\mathcal N}^s$. 

We shall write $i^M$ for $(i^M)^r$, $i^N$ for $(i^N)^r$, and so on,
in the remainder of the proof.

$\bullet_1$. Let $f\in {\mathcal F}^r_{ij}$ with $i\le j<i^M$. 

\emph{Case 1(i).} $i\le j< i^N \le i^M$. Then $f\in {\mathcal F}^s_{ij}$. As $s\in
N$ we have ${\mathcal F}^s_{ij}\in N$ and so $f\in N$ (because both $s$ and ${\mathcal
F}^s_{ij}$ are of size $\lk$). As $s\in N$ we have $\theta^s_j<\delta^N$ and so $f\in
((\kappa^+)^{<\kappa^+})^N$. As $i^N\le i^M$ we have 
$((\kappa^+)^{<\kappa^+})^N\subseteq ((\kappa^+)^{<\kappa^+})^M$ by $\bullet_3$
for $q$. Thus $f\in M$. \endof$_{\hbox{\sevenrm (Case 1(i))}}$

\emph{Case 1(ii).} $i\le j = i^N<i^M$. In this case we can factor $f$ as
$f=f_m\cdot g$ with $g\in {\mathcal F}^r_{im}={\mathcal F}^s_{im}$ and $f_m$ is the
(unique) map in ${\mathcal F}^r_{mm+1}$. By definition $f_m = (F^r\cdot
f^N)^{-1}\cdot F_s$, so $F^r\cdot f^N\cdot f_m=F^s$. Since $s\in N$ and $s$,
${\mathcal F}^s$ are of size $\lk$, we have that $F^s\in N$. As $\theta_{\zeta^s}<\delta^N$,
and by the case hypothesis that $i^N<i^M$, we
can apply $\bullet_5$ for $q$ (to $N$ and $M$), obtaining that $f_m\in M$. 
However, we also have that $g\in M$ by Case 1(i). Hence $f=f_m\cdot g\in M$. 
\endof$_{\hbox{\sevenrm (Case 1(ii))}}$

\emph{Case 1(iii).} $i<i^N<j<i^M$. In this case we can factor $f$ as $f=h\cdot
g$ with $g\in {\mathcal F}^r_{ii^N}$ and $h\in {\mathcal F}^r_{i^Nj}$. Then $h\in M$ by
$\bullet_1$ for $q$ and $g\in M$ by Case 1(ii). Hence $f\in M$.
\endof$_{\hbox{\sevenrm (Case 1(iii))}}$

\emph{Case 1(iv)} $i^N\le i\le j<i^M$. In this case 
${\mathcal F}^r_{ij}={\mathcal F}^q_{ij}$ and so $f\in M$ by $\bullet_1$ for $q$.
\endof$_{\hbox{\sevenrm (Case 1(iv)), (Verification of $\bullet_1$)}}$
\vskip6pt

$\bullet_2$. Let $i<i^M$ and $f\in {\mathcal F}^r_{ii^M}$. 

\emph{Case 2(i).} $i^N\le i$.  Then $\bullet_2$ for $f$ and $M$ holds by
$\bullet_2$ for $q$. \endof$_{\hbox{\sevenrm (Case 2(i))}}$

\emph{Case 2(ii).} $i<i^N<i^M$. Factor $f$ as $f=h\cdot f_m\cdot g$ where 
$h\in {\mathcal F}^r_{i^Ni^M}$ and $g\in {\mathcal F}^r_{im}$. Then $F^r\cdot f^M\cdot
h\in M$ by $\bullet_2$ for $q$. Also, $f_m\cdot g\in M$ by $\bullet_1$ for
$r$. So $F^r\cdot f^M\cdot h\cdot f_m\cdot g=F^r\cdot f^M\cdot f\in M$. 
\endof$_{\hbox{\sevenrm (Case 2(ii))}}$

\emph{Case 2(iii).} $i<i^N=i^M$. Factor $f$ as $f=f_m\cdot g$ where 
$g\in {\mathcal F}^r_{im}$. As in Case 2(ii), 
$g\in M$ by $\bullet_1$ for $r$. As in the proof of Case 1(ii), 
$F^r\cdot f^N\cdot f_m = F^s\in N$. So 
$F^r\cdot f^M\cdot f_m\in M$ by $\bullet_4$ for $q$. \hfill\break
\endof$_{\hbox{\sevenrm (Case 2(iii)), (Verification of $\bullet_2$)}}$
\vskip6pt

Now let $K\in {\mathcal N}^r$. If $K\in {\mathcal N}^q$ then each of $\bullet_1$ -
$\bullet_5$ holds for $r$ by the corresponding property for $q$. 
So suppose $K\in {\mathcal N}^s\setminus {\mathcal N}^q$. 

$\bullet_3$. By Lemma (\ref{maps-in-element-small-model-are-in-larger-good-model}) 
we have that $((\lambda)^{<\kappa^+})^K\subseteq N$. By $\bullet_3$ for $q$ we have 
$((\kappa^+)^{<\kappa^+})^N\subseteq ((\kappa^+)^{<\kappa^+})^M$. Thus
$((\kappa^+)^{<\kappa^+})^K\subseteq ((\kappa^+)^{<\kappa^+})^M$. 
\hfill\break \endof$_{\hbox{\sevenrm (Verification of $\bullet_3$)}}$

$\bullet_4$. As $i^K<i^M$ there is nothing to check here. 
\endof$_{\hbox{\sevenrm (Verification of $\bullet_4$)}}$

$\bullet_5$. Suppose $\gamma<\delta^K$, 
$g\in (\theta_{i^K})^\gamma$ and  $F^r\cdot f^K\cdot g\in K$. 
By Lemma (\ref{maps-in-element-small-model-are-in-larger-good-model}), 
since $s$, ${\mathcal N}^s$ and $K$ are all elements of $N$, we have 
$((\lambda)^{<\kappa^+})^K\subseteq N$, and so $F^r\cdot f^K\cdot g\in N$. 

We also have that $f^K=f^N\cdot f_m\cdot (f^K)^s$, so 
$F^r\cdot f^K = F^r\cdot f^N\cdot (f_m\cdot (f^K)^s)$. 
Thus $F^r\cdot f^K\in N$ by $\bullet_2$ for $r$ applied to $N$
and $f_m\cdot (f^K)^s$.

Hence $g$, $ = (F^r\cdot f^N)^{-1} \cdot (F^r\cdot f^N\cdot g)$,
$\in N$. But then $g\in ((\kappa^+)^{<\kappa^+})^N$ and so $g\in M$ by
$\bullet_3$ for $q$. \endof$_{\hbox{\sevenrm (Verification of $\bullet_5$)}}$
\vskip6pt

Thus $r=(S^r,{\mathcal F}^r,{\mathcal N}^r)\in \Pbb$.\end{proof} 
Hence we have shown that $p^*$ is $N$-generic.\end{proof} 
Thus forcing with $\Pbb$ preserves $\kappa^+$.\end{proof}

\vskip12pt

\section{Preservation of $\lambda$ and greater cardinals.}\label{preservation-of-lambda-and-cardinals-above-section}
\vskip12pt

We need with a couple of auxiliary definitions for this section.

\begin{definition}\label{defn-Zp-and-Xpi} For $p\in \Pbb$
let $Z^p=\setof{i\le \zeta^p}{\exists M\in {\mathcal N}^p \sss\sss (i=i^M)}$ and,
for $i\in Z^p$, $X_i^p= \setof{g\in (\theta_{i^M})^{\gamma}}{\exists M\in {\mathcal N}^p\sss (i=i^M\sss\&\sss
\gamma<\delta^M\sss\&\sss g\in \obar{M})} $.
\end{definition}

Note, for each $p\in\Pbb$ we have that $\forall i\le j\le
\zeta^p\sss(X^p_j\ne\emptyset \Longrightarrow X^p_i\subseteq X^p_j)$, by $\circ_2$,
and for $M\in {\mathcal N}^p$ that
$X^p_{i^M}=\setof{g\in (\theta_{i^M})^{\gamma}}{\gamma<\delta^M\sss\&\sss g\in \obar{M}}$.

For $p\in\Pbb$ and $i\in Z^p$ one has that $X_i^p$ is a
subset of $(\kappa^+)^{<\kappa^+}$ of size at most $\kappa$.

\vskip6pt
We start by showing that $\Pbb$ satisfies the criterion of Proposition 
(\ref{properness-proof-that-lambda-is-preserved}) for preserving $\lambda$. 
Referring to that criterion, here for each $p\in\Pbb$ we will be able to take
the relevant $(N,\Pbb)$-generic $p^*$ to be $p$ itself. 
Because of this, as we will see below, a similar argument allows us to show 
the stronger conclusion that $\Pbb$ has the $\lambda$-chain condition.
Nevertheless, we give the more general argument here as a template, because variants of it 
will be necessary when employing $\Pbb$ as a collection of side conditions in 
forcings $\Qbb$ (when the $(N,\Qbb)$-generic $p^*$ will in general 
not always be able to be taken to be the $p\in\Qbb$ itself) which cannot have the 
$\lambda$-chain condition. 

\begin{proposition}\label{forcing-with-P-preserves-lambda} Suppose $2^\kappa<\lambda$. 
Then forcing with $\Pbb$ preserves $\lambda$. \end{proposition}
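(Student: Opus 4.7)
The plan is to verify the hypothesis of Proposition~\ref{properness-proof-that-lambda-is-preserved}, taking, as the author signals, the $(N,\Pbb)$-generic condition $p^*$ to be $p$ itself. Fix a regular $\mu<\lambda$, which we may assume satisfies $\mu\ge \kappa^+$, and take $\mathcal{U}$ to consist of elementary submodels $N \prec (H_\chi, \in, \pt)$ with $\mu\cup\set{\pt,\Pbb}\subseteq N$ and $|N|=\mu$. Such $N$ are routinely constructed and $\mathcal{U}$ is unbounded in $[H_\chi]^{<\lambda}$; here the hypothesis $2^\kappa<\lambda$ is used to guarantee the existence of $\pt$, as indicated before Definition~\ref{defn-very-good-model}, and so that $\mu$ below $\lambda$ is still large enough to swallow the relevant collapses. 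Given $p\in\Pbb$, pick $N\in\mathcal{U}$ with $p\in N$. Since $\kappa^+\subseteq N$, $p\in N$ and $|{\mathcal N}^p|\le\kappa$, one has ${\mathcal N}^p\subseteq N$ and each $M\in{\mathcal N}^p$ is itself a subset of $N$; similarly all of $S^p$ and the map $F^p$ lie in $N$.

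To see that $p$ is $(N,\Pbb)$-generic, let $D\in N$ be dense and $q\le p$. Following the template of the properness argument, I define a restriction $q\on N\in N\cap\Pbb$ by truncating $S^q$ at the largest level $m\le\zeta^q$ such that $\theta^q_i\in N$ for $i\le m$ and ${\mathcal F}^q_{ij}\subseteq N$ for $i\le j\le m$, collapsing $F^q$ through a transition map (playing the role that $F^*\cdot f^q_m$ played in Proposition~\ref{P-is-proper}), and setting ${\mathcal N}^{q\on N}:={\mathcal N}^q\cap N$. Because $p\in N$, we have $m\ge k(\zeta^p)$ so that $q\on N\le p$; the verification that $q\on N\in\Pbb$ runs parallel to Claim~\ref{q-restricted-to-N-is-a-condition}, using Subclaim~\ref{maps-in-q-restr-N-are-exactly-the-maps-in-q-intersect-N} to identify ${\mathcal F}^{q\on N}$ with ${\mathcal F}^q\cap N$. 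By density of $D$ and elementarity of $N$ we then pick $s\in D\cap N$ with $s\le q\on N$.

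The main obstacle is the amalgamation of $s$ and $q$ into a common extension $r$, following the template of Claim~\ref{there-is-something-below-both-s-and-q}: stack the small SMS of $s$ with the upper portion of $S^q$ (levels $m+1$ through $\zeta^q$) via a connecting amalgamation map, take $F^r:=F^q$, and set ${\mathcal N}^r:={\mathcal N}^s\cup{\mathcal N}^q$. The case analysis verifying $\bullet_1$--$\bullet_5$ for each $K\in{\mathcal N}^r$ is the bulk of the work: one splits on whether the witness level of $K$ lies in the $s$-portion or the $q$-portion of $S^r$, and uses Lemmas~\ref{in-implies-subset-for-good-models}, \ref{maps-in-element-small-model-are-in-larger-good-model} and \ref{good-models-behave-well-with-SMS-structure} to move maps across transitive collapses, exactly as in Cases 1(ii)--1(iii) and 2(ii)--2(iii) of Proposition~\ref{P-is-proper}. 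The crucial departure from the properness argument is that $N$ is \emph{not} adjoined as a new top level of a condition $p^*$; rather, $p$ itself is the generic, so no new model enters ${\mathcal N}^r$ and the amalgamation is performed directly above $p$. I expect the hardest sub-case to be verifying $\bullet_5$ for models in ${\mathcal N}^s\setminus{\mathcal N}^q$ against models in ${\mathcal N}^q\setminus{\mathcal N}^p$, where the facts that ${\mathcal N}^p\subseteq N$ and $p\in N$ are essential for transferring maps out of the transitive collapse of the $s$-side model into the target good model on the $q$-side.
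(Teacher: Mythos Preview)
Your proposal has a genuine gap, and the paper's argument is structurally different from what you outline.

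First, your truncation of $S^q$ is vacuous. Since you take $\mu\ge\kappa^+$, you have $\kappa^+\subseteq N$, and every $\theta^q_i$ and every map in every ${\mathcal F}^q_{ij}$ is a subset of $\kappa^+$; hence all of $S^q$ already lies in $N$ and your ``largest level $m$'' is just $\zeta^q$. The object that fails to lie in $N$ is $F^q$, not $S^q$, so the restriction you describe does not produce a nontrivial $q\on N$. In the properness proof the truncation works because there $N$ is a size-$\kappa$ very good model with $\delta^N<\kappa^+$ and, crucially, $N$ \emph{fits} a specific map $F^q\cdot f_N$ in $q$ because $N\in{\mathcal N}^{p^*}\subseteq{\mathcal N}^q$. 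None of that is available here.

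Second, and more seriously, the amalgamation step you sketch cannot go through. In Claim~\ref{there-is-something-below-both-s-and-q} the verification of $\bullet_3$ and $\bullet_5$ for $K\in{\mathcal N}^s\setminus{\mathcal N}^q$ against $M\in{\mathcal N}^q$ uses $\bullet_3$ for $q$ applied to the pair $(N,M)$, which is only available because $N\in{\mathcal N}^q$. You explicitly note that $N$ is not adjoined to any condition, so this bridge is missing; the facts ``${\mathcal N}^p\subseteq N$ and $p\in N$'' that you hope will substitute give no control over $((\kappa^+)^{<\kappa^+})^M$ for $M\in{\mathcal N}^q\setminus{\mathcal N}^p$.

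The paper's route avoids both problems. It takes $N\prec H_\chi$ with $N\cap\lambda\in\lambda$, $\cf(N\cap\lambda)\ge\kappa^+$ and $N^\kappa\subseteq N$. Given $q\le p$ in $D$, the entire ``type'' of $q$ --- namely $S^q$, the set $Y=\rge(F^q)\cap N$ (an initial segment of $\rge(F^q)$ by transitivity below $\lambda$), $Z^q$, and the sequence $\tupof{X^q_i}{i\in Z^q}$ of Definition~\ref{defn-Zp-and-Xpi} --- is a $\kappa$-sized object and hence lies in $N$. Elementarity then yields $s\in N\cap D$ with $s\le p$, $S^s=S^q$, the same $Y$ as initial segment of $\rge(F^s)$, $Z^s=Z^q$, and $X^s_i=X^q_i$ for all $i$. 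One then amalgamates $s$ and $q$ by adding a single almost-exact amalgamation-pair level on top of $S^q=S^s$. The point is that $\circ_2$ for ${\mathcal N}^r={\mathcal N}^s\cup{\mathcal N}^q$ follows immediately from the equality $X^s_i=X^q_i$: for any $K\in{\mathcal N}^s$ and $M\in{\mathcal N}^q$ there are $M'\in{\mathcal N}^s$, $K'\in{\mathcal N}^q$ at the same levels, and $\circ_2$ transfers through the shared $X_i$'s. No model $N$ inside a condition is needed.
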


\begin{proof}[\ref{forcing-with-P-preserves-lambda}] Let $x\in H_\chi$ and $\mu<\lambda$. 
Suppose that $D$ is a dense set in $\Pbb$ and $x$, $p$, $D$, $\Pbb\in N$
where $N\preq H_{\chi}$, $\mu\le \delta^N=N\cap\lambda\in\lambda$ and
$\cf(\delta^N)\ge\kappa^+$, $\card{N}<\lambda$, and $N^{\kappa}\subseteq N$. 

\begin{claim}\label{conds-in-D-below-p-compat-with-some-cond-below-p-in-D-cap-N} 
If $q\le p$ and $q\in D$ there is some
$s\in D\cap N$ such that $s\le p$ and $s$ and $q$ are compatible.\end{claim}

\begin{proof}[\ref{conds-in-D-below-p-compat-with-some-cond-below-p-in-D-cap-N}] 
As $\theta^q_{\zeta^q}<\kappa^+$ we have that  $F^q``\theta^q_{\zeta^q}$ is of size at most $\kappa$, 
so since $N^\kappa\subseteq N$ we have that $Y=N\cap F^q``\theta^q_{\zeta^q} \in N$. 
Also, as $N\cap \lambda\in \lambda$ we have that
$Y$ is an initial segment of $F^q``\theta^q_{\zeta^q}$. Also, again as $N^\kappa\subseteq N$, 
each $X_i^q$ for $i\in Z^q$ is an element of $N$. 

Let $\phi(x)$ be the formula which is the conjunction of the following
subformulae:
$x\in \Pbb$; $S^x=S^q$; $x\le p$; $Y$ is an initial segment
of $F^x``\theta^x_{\zeta^x}$; $Z^x=Z^q$; 
and $\tupof{X_i^x}{i\in Z^x}=\tupof{X_i^q}{i\in Z^q}$.

Since $q$ witnesses
$H_{\chi}\thinks \exists x \phi(x)$ there is some $s\in N$ such
that $N\thinks \phi(s)$. 

It is easy now to amalgamate the morass parts of $s$ and $q$ - our
comments about $Y$ have ensured that we have a head-tail-tail
amalgamation which we can use in the morass. In more detail, define $r$
by setting $\zeta^r=\zeta^q+1$, $S^r\on \zeta^r = S^q = S^s$, 
$\theta^r_{\zeta^r}=\otp(\rge(F^s)\cup\rge(F^q))+1$,
${\mathcal F}^r_{\zeta^q\zeta^r}=\set{\id, h^r_{\zeta^q}}$, where the splitting point of 
${\mathcal F}^r_{\zeta^q\zeta^r}$, $\sigma^r_{\zeta^q}$, is $\otp(Y)$, 
${\mathcal F}^r_{i\zeta^r}=\setof{f\cdot g}{f\in 
{\mathcal F}^r_{\zeta^q\zeta^r}\sss\&\sss g\in {\mathcal F}^r_{i\zeta^q}}$ for $i<\zeta^q$,
$F^r\on \otp(\rge(F^s)\cup\rge(F^q))$ is the inverse of the transitive collape of
$\rge(F^s)\cup\rge(F^q)$, $F^r (\otp(\rge(F^s)\cup\rge(F^q)))=\ssup(\rge(F^s)\cup\rge(F^q))$, 
and ${\mathcal N}^r={\mathcal N}^s\cup {\mathcal N}^q$.

\begin{subclaim}\label{r-is-in-P}
${\mathcal N}^r$ is such that $r\in \Pbb$, whence
$r$ witnesses that $s$ and $q$ are compatible. \end{subclaim}

\begin{proof}[\ref{r-is-in-P}] The property $\circ_1$ holds for $r$ 
immediately from $\circ_1$ for $q$ and $s$. We will be finished if we
show that $\circ_2$ also holds for $r$.

Suppose that $M\in {\mathcal N}^q$ and $K\in {\mathcal N}^s$. (In all other cases 
$\circ_2$ holds by virtue of holding for $q$ or $s$.)

Since $Z^s=Z^q$ there are $M'\in {\mathcal N}^s$ and $K'\in {\mathcal
N}^q$ such that $i^M=i^{M'}$ and $i^K=i^{K'}$ and, as $s$, $q\in \Pbb$
we have that $\circ_2$ holds for each of the two 
pairs $K$, $M'$ and $K'$, $M$. Thus, if $i^K\le i^M$ we
have that $X^q_{i^K}=X^s_{i^K}$ implies that $\circ_2$ holds for $K$,
$M$, while if $i^K\ge i^M$ then $X^q_{i^M}=X^s_{i^M}$ implies that $\circ_2$ holds
for $K$, $M$. \end{proof} Thus we have shown Claim 
(\ref{conds-in-D-below-p-compat-with-some-cond-below-p-in-D-cap-N}) \end{proof}
Hence forcing with $\Pbb$  preserves $\lambda$. \end{proof}

\begin{proposition}\label{preservation-cards-above-lambda} If $2^\kappa\le\lambda$ then 
cardinals above $\lambda$ are preserved. \end{proposition}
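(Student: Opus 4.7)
My plan is to establish the stronger statement that $\Pbb$ has the $\lambda^+$-chain condition, from which preservation of all cardinals above $\lambda$ follows at once. The argument splits on whether $2^\kappa=\lambda$ or $2^\kappa<\lambda$.

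In the case $2^\kappa=\lambda$ a direct cardinality count suffices: each condition $(S^p,F^p,{\mathcal N}^p)$ is coded by $S^p\in H_{\kappa^+}$, by $F^p\in\lambda^{<\kappa^+}$, and by a set of fewer than $\kappa$ very good models, each determined by its intersection with $\lambda$ (a member of $[\lambda]^\kappa$). Since $\lambda^\kappa=(2^\kappa)^\kappa=\lambda$ and $|H_{\kappa^+}|=\lambda$, one obtains $|\Pbb|\le\lambda<\lambda^+$, and the $\lambda^+$-c.c.\ is trivial.

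In the case $2^\kappa<\lambda$ I would upgrade the amalgamation argument of Proposition~\ref{forcing-with-P-preserves-lambda} into a $\Delta$-system argument, obtaining in fact the stronger $\lambda$-chain condition announced in the introduction. Given any family $\{p_\alpha:\alpha<\lambda\}$ of conditions, I would first pigeonhole on the invariants $S^{p_\alpha}$ and $\tupof{X^{p_\alpha}_i}{i\in Z^{p_\alpha}}$, both of which live in $H_{\kappa^+}$ and so take at most $2^\kappa<\lambda$ values; one may then assume these are constant on an unbounded subfamily. Next, using the cardinal arithmetic afforded by $2^\kappa<\lambda$, apply the $\Delta$-system lemma to the supports $\supp(p_\alpha):=\rge(F^{p_\alpha})\cup\bigcup\{M\cap\lambda:M\in{\mathcal N}^{p_\alpha}\}\in[\lambda]^{\le\kappa}$ to obtain an unbounded $\Delta$-subfamily with common root $Y$, arranged (by a further cofinal pigeonhole) to be an initial segment of each $\rge(F^{p_\alpha})$. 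Pick two members $p_\alpha$, $p_\beta$ of the thinned family and amalgamate them exactly as in Subclaim~\ref{r-is-in-P} of Proposition~\ref{forcing-with-P-preserves-lambda}: adjoin a single new top level above the shared small SMS with an almost-exact amalgamation pair of splitting point $\otp(Y)$, let $F^r$ be the inverse transitive collapse of $\rge(F^{p_\alpha})\cup\rge(F^{p_\beta})$ (extended by one value), and set ${\mathcal N}^r={\mathcal N}^{p_\alpha}\cup{\mathcal N}^{p_\beta}$.

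The main technical obstacle, in the second case, is verification of $\circ_2$ for cross-pairs of models $K\in{\mathcal N}^{p_\alpha}$ and $M\in{\mathcal N}^{p_\beta}$ in the amalgamation. This is precisely the point handled in Subclaim~\ref{r-is-in-P}, and the pigeonhole on the $\tupof{X^{p_\alpha}_i}{i\in Z^{p_\alpha}}$ sequence has been arranged so that the matching identities $X^{p_\alpha}_i=X^{p_\beta}_i$ at corresponding levels, on which that argument relies, hold automatically; the remaining clauses $\circ_1$ and the morass axioms for the amalgamated small SMS are immediate from their validity on each side.
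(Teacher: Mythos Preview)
Your overall strategy---establish the $\lambda^+$-chain condition---is exactly what the paper does, though the paper's proof is literally the single sentence ``As $2^{\kappa}\le\lambda$ the forcing has the $\lambda^+$ chain condition.'' Your Case~2 argument is correct and is essentially the content of Proposition~\ref{P-has-lambda-cc} (the $\lambda$-cc under $2^\kappa<\lambda$), which you anticipate.

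Your Case~1 counting argument, however, has two gaps. First, the definition of $\Pbb$ (Definition~\ref{defn-main-forcing-notion}) places no explicit cardinality bound on ${\mathcal N}^p$; the clause ``for every $M\in{\mathcal N}^p$ there is a unique $(i^M,f^M)$'' asserts uniqueness of the pair given the model, not injectivity of $M\mapsto(i^M,f^M)$, so two models with the same $M\cap\lambda$ could in principle both appear. Second, a very good model $M\preq(H_\lambda,\in,\pt)$ is not obviously determined by $M\cap\lambda$: the well-ordering $\pt$ only codes $(\lambda)^{<\kappa^+}$, not all of $H_\lambda$, and without a fixed Skolemisation you cannot recover the non-ordinal part of $M$ from its ordinals. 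Consequently the count of possible ${\mathcal N}^p$ does not obviously collapse to $\lambda$, and $|\Pbb|\le\lambda$ is unjustified as written.

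The fix is simply to run your Case~2 argument uniformly. When $2^\kappa=\lambda$ one has $\lambda^\kappa=(2^\kappa)^\kappa=2^\kappa=\lambda$, so there exist $N\preq H_\chi$ with $|N|=\lambda$ and $N^\kappa\subseteq N$, and the amalgamation argument of Proposition~\ref{P-has-lambda-cc} goes through verbatim to give the $\lambda^+$-cc (note that argument uses only that $\rge(F^q)$ and each $X_i^q$ have size $\le\kappa$, never a bound on $|{\mathcal N}^q|$). So the case split is unnecessary and the counting can be discarded.
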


\begin{proof}[\ref{preservation-cards-above-lambda}] As $2^{\kappa}\le \lambda$ 
the forcing has the $\lambda^+$ chain condition.\end{proof}

Of course, if $\lambda$ is strongly inaccessible then clearly the forcing $\Pbb$ has the
$\lambda$-chain condition. However, as mentioned above in the preamble to 
Proposition (\ref{forcing-with-P-preserves-lambda}), we can in fact show that $\Pbb$ 
always has the $\lambda$-chain condition. 

\begin{proposition}\label{P-has-lambda-cc} If $2^\kappa<\lambda$ then 
$\Pbb$ has the $\lambda$-chain condition.
\end{proposition}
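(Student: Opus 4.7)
The plan is to replay the argument of Claim \ref{conds-in-D-below-p-compat-with-some-cond-below-p-in-D-cap-N} from the proof of Proposition \ref{forcing-with-P-preserves-lambda}, with the dense set $D$ there replaced by a putative antichain of size $\lambda$ and with the auxiliary condition $p$ taken to be $\onebb$. The observation that upgrades the $\lambda$-preservation argument to a genuine chain-condition argument is that, under the hypothesis $2^\kappa<\lambda$, one has $\card{H_{\kappa^+}}=2^\kappa<\lambda$, and the combinatorial data $S^q$, $Z^q$ and $\tupof{X^q_i}{i\in Z^q}$ attached to any condition $q\in\Pbb$ all lie in $H_{\kappa^+}$.

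So I suppose toward a contradiction that $A\subseteq\Pbb$ is an antichain with $\card{A}=\lambda$. I choose an elementary submodel $N\preq H_\chi$ with $A$, $\Pbb\in N$, $H_{\kappa^+}\subseteq N$, $\card{N}<\lambda$, $N\cap\lambda\in\lambda$, $\cf(N\cap\lambda)\ge\kappa^+$ and $N^{\kappa}\subseteq N$; this is possible because $\card{H_{\kappa^+}}<\lambda$. Since $\card{A}>\card{N}$, I pick $q\in A\setminus N$ and set $Y=N\cap F^q``\theta^q_{\zeta^q}$, which is an initial segment of $F^q``\theta^q_{\zeta^q}$ of size $\le\kappa$ and hence lies in $N$ by $N^{\kappa}\subseteq N$.

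Next I let $\phi(x)$ be the conjunction of: $x\in A$; $S^x=S^q$; $Y$ is an initial segment of $F^x``\theta^x_{\zeta^x}$; $Z^x=Z^q$; and $\tupof{X^x_i}{i\in Z^x}=\tupof{X^q_i}{i\in Z^q}$. Every parameter of $\phi$ lies in $N$, and $q$ witnesses $\exists x\,\phi(x)$ in $H_\chi$, so by elementarity there is some $s\in N\cap A$ with $\phi(s)$. The head-tail-tail amalgamation defined in the proof of Claim \ref{conds-in-D-below-p-compat-with-some-cond-below-p-in-D-cap-N} then produces an $r\in\Pbb$ below both $s$ and $q$; Subclaim \ref{r-is-in-P} applies verbatim, since the only inputs it uses beyond $q\in\Pbb$ and $s\in\Pbb$ are $Z^s=Z^q$ and $\tupof{X^s_i}{i\in Z^s}=\tupof{X^q_i}{i\in Z^q}$, both guaranteed by $\phi(s)$. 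Thus $s$ and $q$ are compatible, but $s\in N$ while $q\notin N$, so $s\ne q$, contradicting that $A$ is an antichain.

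The only genuine point to verify is that $S^q$, $Z^q$ and each $X^q_i$ really do sit in $H_{\kappa^+}$: this is immediate, since $\zeta^q<\kappa$, each $\theta^q_i<\kappa^+$, each $\card{{\mathcal F}^q_{ij}}<\kappa$ with entries from $(\kappa^+)^{<\kappa^+}$, and the remark after Definition \ref{defn-Zp-and-Xpi} tells us each $X^q_i$ is a subset of $(\kappa^+)^{<\kappa^+}$ of size $\le\kappa$. I do not expect any real obstacle: the whole content of the proof is the observation that the combinatorial fingerprint of a condition lives below $\lambda$, after which the amalgamation machinery of the previous section does the work.
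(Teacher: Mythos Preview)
Your proposal is correct and follows essentially the same route as the paper's proof: pick $q\in A\setminus N$ for a suitable elementary submodel $N$, reflect the fingerprint $(S^q,Z^q,\tupof{X^q_i}{i\in Z^q},Y)$ into $N$ to find $s\in A\cap N$, and amalgamate via the head-tail-tail construction, invoking Subclaim~\ref{r-is-in-P} verbatim. The only cosmetic difference is that you secure the parameters in $N$ by explicitly demanding $H_{\kappa^+}\subseteq N$ (using $\card{H_{\kappa^+}}=2^\kappa<\lambda$), whereas the paper achieves the same end from $N^{\kappa}\subseteq N$ together with $\kappa^+\subseteq N\cap\lambda$; both are equivalent here.
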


\begin{proof}[\ref{P-has-lambda-cc}] Suppose that ${\mathcal A}$ is an antichain in $\Pbb$ and 
$\card{\mathcal A}=\lambda$.
Let $\chi$ be large enough so that ${\mathcal A}$, $\Pbb\in H_\chi$ (\emph{e.g.}, $\chi = (2^{\lambda})^+$).
Let $N\preq H_{\chi}$ be such that $\delta^N=N\cap\lambda\in\lambda$ and
$\cf(\delta^N)\ge\kappa^+$, $\card{N}<\lambda$, and $N^{\kappa}\subseteq N$
and ${\mathcal A}$, $\Pbb\in N$. Choose $q\in {\mathcal A}\setminus N$.

\begin{claim}\label{conds-in-A-minus-N-with-some-cond-in-A-cap-N} 
There is some $s\in {\mathcal A}\cap N$ such that $s$ and $q$ are compatible.\end{claim}

\begin{proof}[\ref{conds-in-A-minus-N-with-some-cond-in-A-cap-N}]
As $\theta^q_{\zeta^q}<\kappa^+$ we have that  $F^q``\theta^q_{\zeta^q}$ is of size at most $\kappa$, 
so since $N^\kappa\subseteq N$ we have that $Y=N\cap F^q``\theta^q_{\zeta^q} \in N$. 
Also, as $N\cap \lambda\in \lambda$ we have that
$Y$ is an initial segment of $F^q``\theta^q_{\zeta^q}$.
As $N^\kappa\subseteq N$ we also have that each $X_i^q\in N$.

Let $\phi(x)$ be the formula which is the conjunction of the following
subformulae:
$x\in \Pbb$; $x\in {\mathcal A}$; $S^x=S^q$;  $Y$ is an initial segment
of $F^x``\theta^x_{\zeta^x}$; $Z^x=Z^q$; 
and $\tupof{X_i^x}{i\in Z^x}=\tupof{X_i^q}{i\in Z^q}$.

As $q$ witnesses $H_{\chi}\thinks \exists x \phi(x)$ there is some $s\in N$ such
that $N\thinks \phi(s)$. 

Define $r$ exactly as in the proof of Claim 
(\ref{conds-in-D-below-p-compat-with-some-cond-below-p-in-D-cap-N}):
by setting $\zeta^r=\zeta^q+1$, $S^r\on \zeta^r = S^q = S^s$, 
$\theta^r_{\zeta^r}=\otp(\rge(F^s)\cup\rge(F^q))+1$,
${\mathcal F}^r_{\zeta^q\zeta^r}=\set{\id, h^r_{\zeta^q}}$, where the splitting point of 
${\mathcal F}^r_{\zeta^q\zeta^r}$, $\sigma^r_{\zeta^q}$, is $\otp(Y)$, 
${\mathcal F}^r_{i\zeta^r}=\setof{f\cdot g}{f\in 
{\mathcal F}^r_{\zeta^q\zeta^r}\sss\&\sss g\in {\mathcal F}^r_{i\zeta^q}}$ for $i<\zeta^q$,
$F^r\on \otp(\rge(F^s)\cup\rge(F^q))$ is the inverse of the transitive collape of
$\rge(F^s)\cup\rge(F^q)$, $F^r (\otp(\rge(F^s)\cup\rge(F^q)))=\ssup(\rge(F^s)\cup\rge(F^q))$, 
and ${\mathcal N}^r={\mathcal N}^s\cup {\mathcal N}^q$.

\begin{subclaim}\label{r-is-in-P-cc-argt}
${\mathcal N}^r$ is such that $r\in \Pbb$, whence
$r\in \Pbb$ and $r\le q$, $s$. \end{subclaim}

\begin{proof}[\ref{r-is-in-P-cc-argt}] Verbatim as in the proof of Subclaim (\ref{r-is-in-P}).
\end{proof} Thus we have shown there is some
$s\in {\mathcal A}\cap N$ compatible with $q$.\end{proof}
This contradicts the assumption that ${\mathcal A}$ is an antichain. Thus we have shown that
$\Pbb$ contains no antichains of size $\lambda$.\end{proof}

\vskip18pt

\section{Preservation of $\kappa$ and all smaller cardinals}\label{preservation-of-kappa-and-smaller-cardinals}

In this section we show that if $\kappa$ is a regular cardinal then
$\Pbb$ is $\lk$-closed. The proof is similar to the proof in the following section that
forcing with $\Pbb$ adds a $(\kappa^+,1)$-simplified morass.

Suppose that $\kappa$ is a regular cardinal. 

\begin{proposition}\label{less-than-kappa-closure} 
The forcing with $\Pbb$ is $\lk$-closed.
\end{proposition}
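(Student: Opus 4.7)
The plan is to take an arbitrary descending sequence $\tupof{p_\xi}{\xi<\tau}$ with $\tau<\kappa$ and construct a lower bound $p$ by a direct-limit construction with a fresh top level. Successor $\tau$ is trivial (take $p=p_{\tau-1}$), so assume $\tau$ is a limit ordinal. The chain of witnessing embeddings $k_{\xi\eta}:\zeta^{p_\xi}+1\to\zeta^{p_\eta}+1$ has a direct limit which, since $\kappa$ is regular and $\tau,\zeta^{p_\xi}<\kappa$, is an ordinal $\zeta^*<\kappa$ with coherent embeddings $k_\xi:\zeta^{p_\xi}+1\to\zeta^*$. I set $\zeta^p=\zeta^*$ and copy the $\theta^{p_\xi}_i$ and ${\mathcal F}^{p_\xi}_{ij}$ (for $i,j<\zeta^{p_\xi}$) into $p$ via $k_\xi$ in the obvious way; coherence of the chain makes this well defined.

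For the new top level, set $A=\bigcup_{\xi<\tau}F^{p_\xi}``\theta^{p_\xi}_{\zeta^{p_\xi}}$, $\theta^p_{\zeta^p}=\otp(A)$, and let $F^p:\theta^p_{\zeta^p}\to\lambda$ be the inverse of the transitive collapse of $A$, so $\rge(F^p)=A$. Then $f^{p_\xi p}:=(F^p)^{-1}\cdot F^{p_\xi}$ is a canonical order-preserving map from $\theta^{p_\xi}_{\zeta^{p_\xi}}$ into $\theta^p_{\zeta^p}$ whose range is an initial segment of $\theta^p_{\zeta^p}$, and $F^{p_\xi}=F^p\cdot f^{p_\xi p}$. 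Define ${\mathcal F}^p_{i\zeta^p}=\setof{f^{p_\xi p}\cdot g}{\xi<\tau,\ i\in\rge(k_\xi),\ g\in{\mathcal F}^{p_\xi}_{k_\xi^{-1}(i)\zeta^{p_\xi}}}$, well defined by the coherence of the descent. Let ${\mathcal N}^p=\bigcup_{\xi<\tau}{\mathcal N}^{p_\xi}$; since the side-condition collection grows under descent, this is an increasing union, and each $M\in{\mathcal N}^{p_\xi}$ is witnessed in $p$ by $(k_\xi(i^M_{p_\xi}),f^{p_\xi p}\cdot f^M_{p_\xi})$, independently of the choice of $\xi$.

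Verification that $p\in\Pbb$ has three parts. First, $S^p$ is a small $(\kappa^+,1)$-SMS: the internal axioms below $\zeta^p$ are inherited level-by-level from the $S^{p_\xi}$'s; the cofinality constraint $\cf(\theta^p_{\zeta^p})<\kappa$ at the limit level $\zeta^p$ holds because the $\tau$-sequence of $F^{p_\xi}``\theta^{p_\xi}_{\zeta^{p_\xi}}$'s is cofinal in $A$ and $\tau<\kappa$; directness at $\zeta^p$ follows because any two maps $f_0,f_1\in{\mathcal F}^p_{\cdot\,\zeta^p}$ arise from a common $p_\xi$ (take $\xi$ with both source levels in $\rge(k_\xi)$) and both factor through $f^{p_\xi p}$ at the intermediate level $k_\xi(\zeta^{p_\xi})<\zeta^p$; and Lemma \ref{individual-SMS-maps-are-not-cofinal} holds for the new maps because $\rge(f^{p_\xi p})$ is a proper initial segment of $\theta^p_{\zeta^p}$ in the genuinely non-stable case, while in a stable case the new maps coincide with those in $p_{\xi^*}$ for the stabilisation index. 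Second, the side-condition properties $\bullet_1$--$\bullet_5$ transfer to $p$: for any $M,N\in{\mathcal N}^p$ there is some $\xi^*$ with $M,N\in{\mathcal N}^{p_{\xi^*}}$, and since each ${\mathcal F}^p_{ij}$ with $i\le j<i^M_p$ is a directed union of sets ${\mathcal F}^{p_{\xi'}}_{k_{\xi'}^{-1}(i)k_{\xi'}^{-1}(j)}\subseteq\bar M$ (by the corresponding clause for each $p_{\xi'}$ with $\xi'\ge\xi^*$ containing $M$), the containments required by $\circ_1$ and $\circ_2$ for $p$ follow. Third, $p\le p_\xi$ is witnessed by $(k_\xi,f^{p_\xi p})$ for each $\xi<\tau$, with the singleton/amalgamation-pair rigidity at newly intervening successor pairs handled by the direct-limit construction.

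The main obstacle is the tension between the cofinality constraint on the new limit level $\zeta^p$ (which demands $\cf(\theta^p_{\zeta^p})<\kappa$) and the requirement that each $f^{p_\xi p}$ be non-cofinal into $\theta^p_{\zeta^p}$ while collectively generating enough maps to secure directness and preserve $\circ_2$ across models attached to different $p_\xi$'s. The key technical choice is to take $\theta^p_{\zeta^p}=\otp(A)$ itself: the smallness of $\cf(\tau)$ then carries over to $\cf(\theta^p_{\zeta^p})$, while each $f^{p_\xi p}$'s initial-segment range sits strictly below $\theta^p_{\zeta^p}$ precisely because $A$ is a proper $\tau$-union of $F^{p_\xi}$-ranges. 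This parallels, in the simpler linear setting, the limit-stage construction used in \S\ref{proof-we-add-a-SM-section} to show that the generic object is a $(\kappa^+,1)$-simplified morass.
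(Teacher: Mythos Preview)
Your direct-limit construction handles the new top level $\zeta^p$ correctly, but there is a genuine gap at \emph{intermediate} limit levels of the direct limit. You write that ``the internal axioms below $\zeta^p$ are inherited level-by-level from the $S^{p_\xi}$'s,'' but the cofinality constraint in the definition of a small SMS (that $\cf(\theta_i)<\kappa$ whenever $i$ is a limit) is \emph{not} inherited this way. A level $\xi<\zeta^p$ can be a limit ordinal in the direct limit even though its preimage $k_\eta^{-1}(\xi)$ is a successor (or zero) in every $p_\eta$ in which it appears. At such a level, $\theta^p_\xi=\theta^{p_\eta}_{k_\eta^{-1}(\xi)}$ is unconstrained in each $p_\eta$ and may well have cofinality $\kappa$, so the putative $S^p$ fails to be a small SMS. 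The paper gives exactly this scenario as an explicit example: a sequence in which $k_{\alpha_l}(i)=i$ for $i<l$ but $k_{\alpha_l}(l)=\omega$, with $\cf(\theta^{p_{\alpha_l}}_l)=\kappa$. No individual $p_{\alpha_l}$ sees level $\omega$ as a limit, yet the direct limit does.

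The paper's fix is an interleaving procedure: at each such problematic limit level $\xi$ one inserts a new level just below it, choosing a $\theta^*_\xi$ of cofinality $<\kappa$ strictly between the supremum of the ranges of all maps into the old $\theta_\xi$ and $\theta_\xi$ itself. This is precisely where Lemma~\ref{individual-SMS-maps-are-not-cofinal} does real work, guaranteeing that such a $\theta^*_\xi$ exists. Your proof needs this extra step (or an argument that the bad configuration cannot occur, which it can). A smaller point: your claim that $\rge(f^{p_\xi p})$ is an initial segment of $\theta^p_{\zeta^p}$ is also false in general, since $\rge(F^{p_\xi})$ need not be downward-closed in $A$; but this is not where the argument breaks.
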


\begin{proof}[\ref{less-than-kappa-closure}] Let $\mu$ be a cardinal with $\mu<\kappa$. 
Let $\tupof{p_\alpha}{\alpha<\mu}$ be a descending sequence 
of conditions from $\Pbb$. For $\alpha<\beta<\mu$ let $k_{\alpha\beta}$ witness that 
$p_\beta\le p_\alpha$. Note that 
if $\alpha<\beta<\gamma<\mu$ then $k_{\alpha\gamma}=k_{\beta\gamma}\cdot k_{\alpha\beta}$. 

We start by constructing a small SMS segment $S$ into which each $S^{p_\alpha}$ is embedded.

Define two relations $\sim$ and $<$ on the set 
$\setof{(\alpha,i)}{\alpha<\mu\sss\&\sss i\le \zeta^{p_\alpha}}$,
by $(\alpha,i)\sim (\beta,j)$ if $\alpha\le\beta$ and $k_{\alpha\beta}(i)=j$ or 
$\beta\le\alpha$ and $k_{\beta\alpha}(j)=i$, and, similarly, $(\alpha,i) < (\beta,j)$ 
and $\alpha\le\beta$ and $k(i)<j$ or $\beta\le\alpha$ and $k(j)<i$. Clearly $\sim$ is an
equivalence relation and $<$ is a linear ordering which respects
$\sim$, inducing a linear ordering on the $\sim$-equivalence classes
${\mathcal E}=\setof{[(\alpha,i)]_\sim}{\alpha<\mu\sss\&\sss i\le \zeta^{p_\alpha}}$.  

For $x=[(\alpha,i)]_\sim\in {\mathcal E}$ let $\theta_x = \theta^{p_\alpha}_i$. Note, by
the definition of $\sim$ and of $\le_{\Pbb}$, that $\theta_x$ is
independent of the choice of the pair $(\alpha,i)$. Note, also, that 
$x = [(\alpha,i)]_{\sim} < y = [(\beta,j)]_{\sim}$ if and only if
$\theta_x<\theta_y<\kappa^+$. Consequently, $<$ well-orders ${\mathcal E}$.

For $x$, $y\in {\mathcal E}$ define
${\mathcal F}_{xy}=\bigcup\setof{f\in {\mathcal F}^{p_\alpha}_{ij}}{\alpha<\mu\sss\&\sss 
x=[(\alpha,i)]_\sim \sss\&\sss y=[(\alpha,j)]_\sim }$.
Let ${\mathcal F}_x= \setof{F^{p_\alpha}\cdot f}{x=[(\alpha,i)]_\sim \sss\&\sss f\in {\mathcal F}^{p_\alpha}_{i\zeta^p_{\alpha}} }$.

Let $\setof{x_{\xi}}{\xi<\zeta'}$ be the $<$-increasing ordering of ${\mathcal E}$. 
Note that as $\card{\setof{(\alpha,i)}{\alpha<\mu\sss\&\sss i\le \zeta^{p_\alpha}}} < \kappa$ 
we must have that $\zeta'<\kappa$. 

If $\zeta'$ is a successor, \emph{i.e}, there is a $<$-maximal element of ${\mathcal E}$,
then set $\zeta$ to be the predecessor of $\zeta$. Otherwise, let $\zeta=\zeta'$. 

For $\alpha<\mu$ define $k_\alpha:\zeta^{p_{\alpha}}+1\longrightarrow \zeta+1$
by $k_\alpha(i)= $ that $\xi$ such that $x_\xi = [(\alpha,i)]_{\sim}$ for $i<\zeta^{p_{\alpha}}$.

For $\xi<\zeta'$ let $\theta_\xi=\theta_{x_\xi}$. For $\nu\le\xi<\zeta'$ let 
${\mathcal F}_{\nu\xi}={\mathcal F}_{x_\nu x_\xi}$. If $\zeta'=\zeta+1$ and $x_\zeta=[(\alpha,i)]$ for 
$\alpha<\mu$ and $i=\zeta^{p_\alpha}$, let  $F= F^{p_{\alpha}}$.

If $\zeta=\zeta'$ let $\theta_\zeta = \otp(\bigcup\setof{\rge(F^{p_\alpha})}{\alpha<\mu})$, let
$F$ be the inverse of the transitive collapse of $\bigcup\setof{\rge(F^{p_\alpha})}{\alpha<\mu}$
and for $\xi<\zeta$ let ${\mathcal F}_{\xi\zeta} = 
\bigcup\setof{F^{-1}\cdot F^{p_{\alpha}}\cdot f}{\alpha<\mu \sss\&\sss 
k_\alpha(i)=\xi \sss\&\sss f\in {\mathcal F}^{p_\alpha}_{i\zeta^{p_\alpha}} }$.

We shall show that the structure $\tup{\tupof{\theta_\xi}{\xi\le \zeta}, 
\tupof{{\mathcal F}_{\nu\xi}}{\nu\le \xi\le \zeta}}$ has almost all of the properties
of a small SMS and then show how to gently interleave limit levels of small cofinality
where necessary.

We start with the structure of the sets ${\mathcal F}_{\xi\xi+1}$ for $\xi<\zeta$.

Given $\xi<\zeta$ let $\alpha<\mu$ be such that for some $i<\zeta^{p_{\alpha}}$ 
one has $x_\xi=[(\alpha,i)]_\sim$ and $x_{\xi+1}=[(\alpha,i+1)]_\sim$.
Then, by the definition of $\Pbb$, ${\mathcal F}^{p_\alpha}_{ii+1}$ is either a
singleton (consisting of a order-preserving map) or an amalgamation pair, and 
for all $\beta\in (\alpha,\mu)$ we have $k_{\alpha\beta}(i+1)=k_{\alpha\beta}(i) + 1$ and
${\mathcal F}^{p_\beta}_{k_{\alpha\beta}(i)k_{\alpha\beta}(i)+1}= {\mathcal F}^{p_\alpha}_{ii+1}$. 
Hence we must then have that ${\mathcal F}_{\xi\xi+1}= {\mathcal F}^{p_\alpha}_{ii+1}$ and is a
singleton (consisting of a order-preserving map) or an amalgamation pair.  
(Note that, by the definition of small SMS, if ${\mathcal F}_{\xi\xi+1}={\mathcal F}^{p_\alpha}_{ii+1} =\set{f_i}$ 
is a singleton then $f_i$ is not cofinal.)

Now we show  the ${\mathcal F}_{\nu\xi}$ have the required factorization property.

Suppose $\nu\le\tau\le\xi<\zeta'$ and $f\in {\mathcal F}_{\nu\tau}$ and $g\in {\mathcal F}_{\tau\xi}$. 
As $\mu$ is a limit ordinal there is some $\alpha<\mu$ with some 
$i\le j \le l\le \zeta^{p_\alpha}$ such that $k_\alpha(i)=\nu$, $k_\alpha(j)=\tau$ and $k_\alpha(l)=\xi$
(\emph{i.e.}, $x_\nu=[(\alpha,i)]_\sim$, $x_\tau=[(\alpha,j)]_\sim$ and $x_\xi=[(\alpha,l)]_\sim$),
$f\in {\mathcal F}^{p_{\alpha}}_{ij}$ and $g\in {\mathcal F}^{p_{\alpha}}_{jl}$. Thus $g\cdot f\in
{\mathcal F}^{p_{\alpha}}_{il}$, and hence $g\cdot f\in {\mathcal F}_{\nu\xi}$. 

Similarly, suppose $\nu\le\tau\le\xi<\zeta$ and $h\in {\mathcal F}_{\nu\xi}$.  
There is some $\alpha<\mu$ and $i\le j \le l\le \zeta^{p_{\alpha}}$ 
such that $k_\alpha(i)=\nu$, $k_\alpha(j)=\tau$, $k_\alpha(l)=\xi$
and $h\in {\mathcal F}^{p_{\alpha}}_{il}$. 
But then, as $S^{p_{\alpha}}$ is a small SMS, 
there are $g\in {\mathcal  F}^{p_{\alpha}}_{jl}$, $\subseteq {\mathcal F}_{\tau\xi}$, and 
$f\in {\mathcal F}^{p_{\alpha}}_{ij}$, $\subseteq {\mathcal F}_{\nu\tau}$, such that 
$g\cdot f= h$.

Now suppose that $\zeta=\zeta'$ and $\nu\le\tau<\zeta$. If 
$f\in {\mathcal F}_{\nu\tau}$ and $g\in {\mathcal F}_{\tau\zeta}$ there is some $\alpha<\mu$,
$i\le j \le l\le \zeta^{p_{\alpha}}$ and $h\in {\mathcal F}^{p_\alpha}_{j\zeta^{p_{\alpha}}}$
such that $k_\alpha(i)=\nu$, $k_\alpha(j)=\tau$, $f\in {\mathcal F}^{p_\alpha}_{ij}$ and 
$g=F^{-1}\cdot F^{p_{\alpha}} \cdot h$. Then $g\cdot f = F^{-1}\cdot F^{p_{\alpha}} \cdot (h\cdot f)$,
where $h\cdot f \in {\mathcal F}^{p_\alpha}_{i\zeta^{p_{\alpha}}} \subseteq {\mathcal F}_{\nu\tau}$, and 
hence $g\cdot f\in {\mathcal F}_{\nu\zeta}$.

Similarly, if $\zeta=\zeta'$, $\nu\le\tau<\zeta$ and $f\in {\mathcal F}_{\nu\zeta}$
there is some $\alpha<\mu$, $i\le j \le l\le \zeta^{p_{\alpha}}$,
$h\in {\mathcal F}^{p_\alpha}_{ij}$ and $g\in {\mathcal F}^{p_\alpha}_{j \zeta^{p_{\alpha}}}$ 
such that $k_\alpha(i)=\nu$, $k_\alpha(j)=\tau$, and $f=(F^{-1}\cdot F^{p_{\alpha}}\cdot g)\cdot h$.

Next, we show directedness at limits. 

Suppose $\nu$, $\tau<\xi<\zeta'$, $\xi$ is a limit ordinal, $f\in {\mathcal F}_{\nu\xi}$ and $g\in {\mathcal F}_{\tau\xi}$.
Let $\alpha<\mu$ be such that there are $i\le j< l\le \zeta^{p_{\alpha}}$ 
with $k_\alpha(i)=\nu$, $k_\alpha(j)=\tau$, 
$f\in {\mathcal F}^{p_\alpha}_{il}$ and $g\in {\mathcal F}^{p_\alpha}_{jl}$. 

If $l$ is a successor then ${\mathcal F}^p_{l-1l}$ is a singleton, say
$\set{h_{l-1}}$ (as otherwise $k_{\alpha\beta}(l)=k_{\alpha\beta}(l-1)+1$ for all
$\beta\in (\alpha,\mu)$ and hence $k_\alpha(l)=\xi$ is a successor). 
Furthermore, as $S^p$ is a SMS there are $f'\in {\mathcal F}^{p_{\alpha}}_{il-1}$ and $g'\in {\mathcal F}^{p_{\alpha}}_{jl-1}$ such that 
$f=h_{l-1}\cdot f'$ and $g=h_{l-1}\cdot g'$. But then 
$k_\alpha(l-1) \in [\nu\cup\tau,\xi)$ and $f'\in {\mathcal F}_{\nu k_\alpha(l-1)}$, 
$g'\in {\mathcal F}_{\tau k_\alpha(l-1)}$ and $h_{l-1}\in {\mathcal F}_{k_\alpha(l-1)\xi}$. 

If $l$ is a limit ordinal, by the definition of small SMS, 
there is some $m\in [\max(\set{i,j}), l)$, and
$h\in {\mathcal F}^{p_{\alpha}}_{ml}$, $f'\in {\mathcal F}^{p_{\alpha}}_{im}$ and $g'\in {\mathcal F}^{p_{\alpha}}_{jm}$ such that
$f = h \cdot f'$ and $g= h\cdot g'$. Hence $f'\in {\mathcal F}_{\nu k_\alpha(m)}$, 
$g'\in {\mathcal F}_{\tau k_\alpha(l-1)}$ and $h\in {\mathcal F}_{k_\alpha(m)\xi}$. 

Now suppose that $\nu$, $\tau<\zeta=\zeta'$, $f\in {\mathcal F}_{\nu\zeta}$ and $g\in {\mathcal F}_{\tau\zeta}$.
Then there is some $\alpha<\mu$ and $i\le j\le \zeta^{p_{\alpha}}$ 
with $k_\alpha(i)=\nu$, $k_\alpha(j)=\tau$, and $k_\alpha(\zeta^{p_{\alpha}})=\xi<\zeta$, and some maps
$f'\in {\mathcal F}_{i\zeta^{p_{\alpha}}}$ and $g'\in {\mathcal F}_{j\zeta^{p_{\alpha}}}$ with
$f=F^{-1}\cdot F^{p_{\alpha}}\cdot f'$ and $g=F^{-1}\cdot F^{p_{\alpha}}\cdot g'$. As 
$F^{-1}\cdot F^{p_{\alpha}}\in {\mathcal F}_{\xi\zeta}$ we are done.

We have now shown that $\tup{\tupof{\theta_\xi}{\xi\le \zeta},
\tupof{{\mathcal F}_{\nu\xi}}{\nu\le \xi\le \zeta}}$ has all of the
properties of a small SMS, except possibly the property that if
$\xi\le\zeta$ is a limit ordinal then $\cf(\theta_\xi)<\kappa$.  In
order to guarantee this last property, in general, we must add
additional levels to ${\mathcal E}$.

In order to see why consider the example of some $\tupof{\alpha_l}{l<\omega}$
in which for all $l<\omega$ we have $k_{\alpha_l}(i) = i$ for all $i<l$ but 
$\omega= k_{\alpha_l}(l)$ and $\cf(\theta^{p_{\alpha_l}}_l)=\kappa$.  
We then need to insert a `new' $\omega$-th level in order to ensure 
that the $\theta$ of the $\omega$-th level of the lower bound we are constructing 
does not have cofinality $\kappa$.

This interleaving of additional levels is not hard to do, 
but writing down the details formally, as we do in the following four paragraphs, 
is a little involved.

So, set $\zeta^*=\zeta+1$ if $\cf(\theta_{x_\zeta})=\kappa$, and otherwise let $\zeta^*=\zeta$.
For $\alpha<\mu$ we define  $k^*_\alpha:\zeta^{p_{\alpha}}\longrightarrow \zeta^*$ as follows. 
For each $i\le\zeta^{p_{\alpha}}$ let $\xi_i$ be the greatest limit ordinal such that 
$\xi_i\le k_\alpha(i)$ if it exists
and undefined otherwise. If $\xi_i$ is defined and $\cf(\theta_{\xi_i})=\kappa$ then 
$k^*_\alpha(i)=k_\alpha(i)+1$. Otherwise we take $k^*_\alpha(i)=k_\alpha(i)$. 

For $\xi\le\zeta^*$ such that $\xi\in\bigcup\setof{\rge{k^*_\alpha}}{\alpha<\mu}$ we let
$\theta^*_\zeta = \theta^{p_{\alpha}}_j$ for any pair $(\alpha,j)$
such that $k^*_{\alpha}(j)=\xi$. If $\nu\le\xi\le\zeta^*$ and $\nu$,
$\xi\in\bigcup\setof{\rge{k^*_\alpha}}{\alpha<\mu}$ let $\alpha<\mu$
be such that $\nu$, $\xi\in\rge{k^*_\alpha}$, let 
$\nu'= k_\alpha((k^*_{\alpha})^{-1}(\nu)$ and 
$\xi'= k_\alpha((k^*_{\alpha})^{-1}(\xi)$ and set 
${\mathcal F}^*_{\nu\xi}= {\mathcal F}_{\nu'\xi'}$.

Now suppose otherwise, that $\xi\le\zeta^*$ but 
$\xi\ni\bigcup\setof{\rge{k^*_\alpha}}{\alpha<\mu}$, hence $\xi$ is a limit ordinal.
Let $\alpha^*$ be least such that $\xi\in\rge(k_{\alpha^*})$.  
For all $\alpha\in [\alpha^*,\mu)$ and all 
$f\in {\mathcal F}^{p_\alpha}_{ij}$, where $k_\alpha(j)=\xi$ let $\gamma_f=\ssup(\rge(f))$.
Lemma (\ref{individual-SMS-maps-are-not-cofinal}) shows that for all such $f$ we have $\gamma_f<\theta_\xi$. 
Hence, as $\cf(\theta_\xi)=\kappa$, we have 
$\ssup(\setof{\gamma_f}{\alpha\in [\alpha^*,\mu)\sss\&\sss
k_\alpha(j)=\xi \sss\&\sss f\in {\mathcal F}^{p_\alpha}_{ij}})<\theta_\xi$. 

So we can choose $\theta^*_\xi$ such that
$\ssup(\setof{\gamma_f}{\alpha\in [\alpha^*,\mu)\sss\&\sss
k_\alpha(j)=\xi \sss\&\sss f\in {\mathcal F}^{p_\alpha}_{ij}})<\theta^*_\xi<\theta_\xi$ and $\cf(\theta^*_\xi)<\kappa$.
We set ${\mathcal F}^*_{\xi\xi+1} = \set{\id}$
and define the other sets of maps in the 
obvious way using composition to ensure the last clause in the definition of small SMS holds: 
\emph{e.g.}, for $\nu\in\bigcup\setof{\rge{k^*_\alpha}}{\alpha<\mu}$
if $\nu<\xi$ we take
${\mathcal F}^*_{\nu\xi+1}=\setof{\id\cdot f}{f\in {\mathcal F}^*_{\nu\xi}}$
and if $\xi<\nu$ we take 
${\mathcal F}^*_{\xi\nu}=\setof{f\cdot \id}{f\in {\mathcal F}^*_{\xi+1\nu}}$.

Now set $S=\tup{\tupof{\theta^*_\xi}{\xi\le \zeta^*}, 
\tupof{{\mathcal F}^*_{\nu\xi}}{\nu\le \xi\le \zeta^*}}$.
It is clear that $S$ is an small SMS.

We also define $\mathcal{N}=\setof{N}{\exists \alpha<\mu \sss\sss  N\in \mathcal{N}^{p_\alpha}}$
and set $p=(S,F,\mathcal{N})$.

In order to finish the proof we must show that $p\in\Pbb$ and that
$p\le p_\alpha$ for each $\alpha<\mu$.

For the former all that is left to show is that $\mathcal{N}$ is as required. 

Suppose $M\in \mathcal{N}$. Let $\alpha<\mu$ be such that $M\in \mathcal{N}^{p_\alpha}$., and 
let $i^M\le \zeta^{p_\alpha}$ and $f\in {\mathcal F}^{p_\alpha}_{i\zeta^{p_\alpha}}$ witness this.
Then $M$ fits $F^{p_\alpha} \cdot f $, $=F\cdot (F^{-1}\cdot F^{p_\alpha} \cdot f )$ and the 
uniqueness of $(k^*_\alpha(i^M), F^{-1}\cdot F^{p_\alpha} \cdot f)$ follows from
Lemma (\ref{individual-SMS-maps-are-not-cofinal}). 

Suppose $i'\le j'\le k^*_\alpha(i^M)$ and $f\in {\mathcal F}_{i'j'}$. 
If $i'\in \bigcup\setof{\rge(k^*_\beta)}{\beta\in (\alpha\mu)}$ let $i=i'$ and otherwise
let $i=i'+1$. Define $j$ similarly. Then 
there is some $\beta\in (\alpha,\mu)$ and $\obar{i}$, $\obar{j}\le \zeta^{p_\beta}$
such that $k(\obar{i})=i$, $k(\obar{j})=j$ and $f\in {\mathcal F}^{p_\beta}_{\obar{i}\obar{j}}$. 
As $M\in {\mathcal N}^{p_\beta}$ we have that $f\in \obar{M}$.

Similarly, if $N\in {\mathcal N}$, $\gamma<\delta^N$, $g\in (\theta_{i^N})^\gamma$,
$i^N\le i^M$ and $g\in \obar{N}$ there is some $\beta\in (\alpha,\mu)$ such that
$N\in {\mathcal N}^{p_\beta}$ and $i^N\in\rge(k^*_\beta)$ and hence $g\in \obar{M}$.

Now let $\alpha<\mu$. We check that $p\le p_\alpha$.

We take the witnessing '$k$' to be $k^*_\alpha$. The three bulletted properties of $k^*_\alpha$ 
are immediate from the definition of $k^*_\alpha$ and that of $S$. We take
$f^{p_\alpha p}$ to be $F^{-1}\cdot F^{p_\alpha}$, making it immediate that $F^{p_\alpha} = 
F\cdot f^{p_\alpha p}$. It is also immediate from the definition of ${\mathcal N}$ that
${\mathcal N}^{p_\alpha}\subseteq {\mathcal N}$. 

Finally, if $N\in {\mathcal N}$ is witnessed by $(k^*_\alpha(i),f^{p_\alpha p}\cdot g)$ for
some $g\in {\mathcal F}^{p_\alpha}_{i\zeta^{p_\alpha}}$ then 
we must show that $N\in {\mathcal N}^p$.

Let $\beta \in [\alpha,\mu)$ be such that $N \in {\mathcal N}^{p_\beta}$. By the definition 
of $k^*_\alpha$ and the fact that $k^*_\alpha = k^*_\beta \cdot k_{\alpha\beta}$,
and since $F^{p_\alpha} = F^{p_\beta}\cdot f^{p_\alpha p_\beta}$, 
we have that this is witnessed by $(k_{\alpha\beta}(i),f^{p_\alpha p_\beta}\cdot g)$. 
As $p_\beta \le p_\alpha$ we have (by the final property in the list of requirements
defining $\le_{\Pbb}$) that $N\in {\mathcal N}^{p_\alpha}$ (and 
$(i,g)$ witnesses that $N$ fits $F^g\cdot g$). 
\end{proof}

\section{Showing $\Pbb$ adds an $(\kappa^+,1)$-simplified morass.}\label{proof-we-add-a-SM-section}
\vskip12pt

\begin{lemma}\label{density-lemma} Let $p\in \Pbb$, $\theta\in (\otp(\rge(F^p)\cup\set{\zeta})), \kappa^+)$
and $\zeta<\lambda$. 
Then there is some $q\in \Qbb$ such that $\zeta^q=\zeta^p+1$,  
$\theta=\theta^q_{\zeta^q}$ and $\zeta\in \rge(F^q)$.\end{lemma}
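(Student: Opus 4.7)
The plan is to extend $p$ by appending a single new top level of order type $\theta$, using a singleton map, and to choose the new ``top'' map $F^q$ so that both $\rge(F^p)$ and $\zeta$ fall into its range. The hypothesis $\theta>\otp(\rge(F^p)\cup\set{\zeta})$ is precisely what gives us enough room to do this.

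Concretely, set $\zeta^q=\zeta^p+1$, $\theta^q_i=\theta^p_i$ for $i\le\zeta^p$, and $\theta^q_{\zeta^q}=\theta$. Let $A=\rge(F^p)\cup\set{\zeta}\subseteq\lambda$ and $\alpha=\otp(A)<\theta$, and let $g:\alpha\longrightarrow_{\hbox{\sevenrm o.p.}}\lambda$ enumerate $A$. Since $\theta<\kappa^+\le\lambda$, we may extend $g$ to an order-preserving map $F^q:\theta\longrightarrow\lambda$ (the extension beyond $\alpha$ can be chosen freely among ordinals in $\lambda\setminus(\ssup(A)+1)$). Define $f^{pq}:\theta^p_{\zeta^p}\longrightarrow\theta$ by $f^{pq}(\xi)=(F^q)^{-1}(F^p(\xi))$, which is well-defined and order-preserving because $\rge(F^p)\subseteq\rge(F^q)$; moreover $\rge(f^{pq})\subseteq\alpha<\theta$, so $f^{pq}$ is not cofinal in $\theta$. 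Take ${\mathcal F}^q_{ij}={\mathcal F}^p_{ij}$ for $i\le j\le\zeta^p$, ${\mathcal F}^q_{\zeta^p\zeta^q}=\set{f^{pq}}$, and ${\mathcal F}^q_{i\zeta^q}=\setof{f^{pq}\cdot g}{g\in{\mathcal F}^p_{i\zeta^p}}$ for $i\le\zeta^p$. Let ${\mathcal N}^q={\mathcal N}^p$ and $q=(S^q,F^q,{\mathcal N}^q)$.

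That $S^q$ is a small SMS is immediate: the only new level $\zeta^q$ is a successor (so the limit cofinality clause does not apply), and ${\mathcal F}^q_{\zeta^p\zeta^q}$ is a singleton whose map is not cofinal, as required. For each $M\in{\mathcal N}^p$ its witness in $q$ is $(i^M,f^{pq}\cdot f^M)$, and since $i^M\le\zeta^p<\zeta^q$ the conditions $\bullet_1$--$\bullet_5$ of Lemma (\ref{unpacking-the-circ-conditions}) for $q$ at $M$ reduce either to statements about levels $\le\zeta^p$ (unchanged from $p$) or to the equality $F^q\cdot(f^{pq}\cdot f^M)=F^p\cdot f^M$, so they follow at once from the corresponding facts for $p$. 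Hence $q\in\Pbb$.

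For $q\le p$, take $k$ to be the identity on $\zeta^p+1\subseteq\zeta^q+1$; the three bulletted clauses about $k$ hold trivially, $f^{pq}$ is the required amalgamating map with $F^p=F^q\cdot f^{pq}$, ${\mathcal N}^p={\mathcal N}^q$, and the last clause about new models in ${\mathcal N}^q$ is vacuous. Finally $\theta^q_{\zeta^q}=\theta$ and $\zeta\in A\subseteq\rge(F^q)$ by construction. The only thing one must be a little careful with is the verification that $f^{pq}$ is not cofinal into $\theta$, but this is forced by the strict inequality $\alpha<\theta$, which in turn comes from the hypothesis on $\theta$; no other step presents any real obstacle.
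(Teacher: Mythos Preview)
Your proof is correct and follows essentially the same approach as the paper's: both append a single new top level with a singleton map, keep ${\mathcal N}^q={\mathcal N}^p$, and arrange $F^q$ so that $\rge(F^p)\cup\set{\zeta}\subseteq\rge(F^q)$. The only cosmetic difference is that the paper splits into the cases $\zeta\in\rge(F^p)$ and $\zeta\notin\rge(F^p)$ and writes down explicit formulas for $f_{\zeta^p}$ and $F^q$ in each, whereas you handle both at once by enumerating $A=\rge(F^p)\cup\set{\zeta}$ and setting $f^{pq}=(F^q)^{-1}\cdot F^p$; this is a minor streamlining, not a different argument.
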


\begin{proof}[\ref{density-lemma}] Define $q$ as follows. Let $\zeta^q=\zeta^p+1$.

If $\zeta\ni \rge(F^p)$ let $\gamma=\otp(\rge(F^p)\cap \zeta))$.
Define $f_{\zeta^p}:\theta^p_{\zeta^p}\longrightarrow \theta$ by
$f_{\zeta^p}\on\gamma=\id$ and $f_{\zeta^p}(\gamma+\xi)=\gamma+1+\xi$ for
$\xi\in\otp(\theta^p_{\zeta^p}\setminus \gamma)$.
Define $F^q:\theta\longrightarrow \lambda$
by $F^q(\xi)=F^p(\xi)$ for $\xi<\gamma$, $F^q(\gamma)=\zeta$,
$F^q(\gamma+1+\xi) =F^p(\gamma+\xi)$ for
$\xi\in \otp(\theta^p_{\zeta^p}\setminus \gamma)$ 
and $F^q(\theta^p_{\zeta^p} + \xi)= \ssup(F^p``\theta^p_{\zeta^p})+\xi$ for
$\xi\in \otp(\theta\setminus(\gamma+1+\otp(\theta^p_{\zeta^p} \setminus \gamma)))$.

If, on the other hand, $\zeta\in \rge(F^p)$ let
$f_{\zeta^p}=\id$ and let $F^q:\theta\longrightarrow \lambda$ be
given by $F^q(\xi)=F^p(\xi)$ for $\xi<\theta^p_{\zeta^p}$ and 
$F^q(\theta^p_{\zeta^p} + \xi)= \ssup(F^p``\theta^p_{\zeta^p})+\xi$ for 
$\xi\in \otp(\theta\setminus\theta^p_{\zeta^p})$. 

Finally, let 
\[S^q=\tup{ \tupof{ \theta^p_{i} }{ i\le \zeta^p }\concat 
\theta,
\tupof{ {\mathcal F}^p_{ij} }{ i\le j\le \zeta^p }\concat
\tupof{ \setof{f_{\zeta^p} \cdot f }{ f\in {\mathcal F}^p_{i\zeta^p} } }{i\le \zeta^p} },\] 
${\mathcal F}^*=\setof{ (F^q\cdot f, \theta^{q}_i) }{ 
i\le \zeta^p+1 \sss\&\sss f\in {\mathcal F}^q_{i\zeta^p+1} }$ and 
${\mathcal N}^q={\mathcal N}^p$.\end{proof}

\vskip12pt

\begin{proposition}\label{P-adds-a-simplified-morass} Forcing with $\Pbb$ adds an 
$(\kappa^+,1)$-simplified morass.\end{proposition}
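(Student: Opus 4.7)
The plan is to extract the morass directly from the small SMS segments appearing in the conditions of a $\Pbb$-generic $G$. Define $\Theta_G=\bigcup_{p\in G}\setof{\theta^p_i}{i\le\zeta^p}$; by Lemma \ref{density-lemma} this set is cofinal in $\kappa^+$, and a counting argument (each $p\in G$ contributes fewer than $\kappa$ levels while density supplies $\kappa^+$ many new top levels) shows that $\Theta_G$ has order type exactly $\kappa^+$. Enumerate it increasingly as $\tupof{\theta_\alpha}{\alpha<\kappa^+}$ and set $\theta_{\kappa^+}=\lambda$, which equals $\kappa^{++}$ in $V[G]$ by the cardinal preservation results of Sections \ref{preservation-of-kappaplus-section}--\ref{preservation-of-kappa-and-smaller-cardinals} together with the well-known fact that the existence of a simplified morass collapses all cardinals strictly between $\kappa^+$ and the top level. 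For $\alpha\le\beta<\kappa^+$ set $\mathcal{F}_{\alpha\beta}=\bigcup\setof{\mathcal{F}^p_{ij}}{p\in G,\, \theta^p_i=\theta_\alpha,\, \theta^p_j=\theta_\beta}$, for $\alpha<\kappa^+$ set $\mathcal{F}_{\alpha\kappa^+}=\setof{F^p\cdot f}{p\in G,\, \theta^p_i=\theta_\alpha,\, f\in\mathcal{F}^p_{i\zeta^p}}$, and $\mathcal{F}_{\kappa^+\kappa^+}=\set{\id}$.

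Most axioms of Definition \ref{defn-omega1-SM} follow routinely. The identity clause and the factorization clause $\mathcal{F}_{\alpha\gamma}=\setof{g\cdot f}{f\in\mathcal{F}_{\alpha\beta},\, g\in\mathcal{F}_{\beta\gamma}}$ come from the corresponding small SMS properties together with density arguments realising any finite collection of levels in a single $p\in G$. The singleton-or-amalgamation-pair clause for $\mathcal{F}_{\alpha\alpha+1}$ holds because if $\theta_\alpha,\theta_{\alpha+1}$ are consecutive in $\Theta_G$ then, in any $p\in G$ containing both, they must occupy consecutive indices (else some $\theta^p_k$ would appear in $\Theta_G$ strictly between them), and the third bulleted clause of the ordering on $\Pbb$ then pins down $\mathcal{F}_{\alpha\alpha+1}$ uniquely and preserves it under further extension. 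Directedness at a limit index $\varepsilon$ reduces, via amalgamation within $G$, to directedness inside a common $p\in G$ realising $\theta_\alpha$, $\theta_\beta$, $\theta_\varepsilon$ and both maps $e_\alpha$, $e_\beta$; a little extra care is required when the index of $\theta_\varepsilon$ in $p$ is a successor rather than a limit, where one exploits the singleton/amalgamation-pair structure of $\mathcal{F}^p_{l-1,l}$ and, if necessary, refines $p$ to a stronger $r\in G$ in which intermediate morass levels have been inserted to realise $\theta_\varepsilon$ as a limit of $S^r$. Coverage $\bigcup_{\alpha<\kappa^+,\, f\in\mathcal{F}_{\alpha\kappa^+}}f``\theta_\alpha=\kappa^{++}$ follows directly from the second clause of Lemma \ref{density-lemma}, which allows any desired $\zeta<\lambda$ to be forced into $\rge(F^p)$ for some $p\in G$.

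The step I expect to be the main obstacle is the cardinality bound $\card{\mathcal{F}_{\alpha\beta}}<\kappa^+$. Each individual $\mathcal{F}^p_{ij}$ is of size less than $\kappa$ by fiat, but the union ranges over the full generic and nothing in the direct construction controls the total number of maps. The resolution uses the side-condition machinery essentially. A density argument modelled on the proof of Lemma \ref{pstar-is-a-condition} produces some $p_0\in G$ realising both $\theta_\alpha$ and $\theta_\beta$ as levels of $S^{p_0}$ and containing a very good model $M\in\mathcal{N}^{p_0}$ with $\theta_\beta<\delta^M$. Because the ordering requires $\mathcal{N}^{p_0}\subseteq\mathcal{N}^p$ for every $p\le p_0$, the model $M$ persists through every stronger $p\in G$, and clause $\bullet_1$ of Lemma \ref{unpacking-the-circ-conditions} then forces the relevant $\mathcal{F}^p_{i^p j^p}\subseteq\obar{M}$ for every such $p$. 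Hence $\mathcal{F}_{\alpha\beta}\subseteq\obar{M}$, which has cardinality $\kappa$ in $V$ and therefore also in $V[G]$, using that $\kappa$ is preserved (by Proposition \ref{less-than-kappa-closure} when $\kappa$ is regular, and trivially when $\kappa=\omega$). This yields the required bound and completes the verification.
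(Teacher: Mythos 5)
Your construction is essentially the paper's own: the paper indexes the levels by $\sim$-equivalence classes of pairs $(p,i)$ with $p\in G$, which are canonically order-isomorphic to your set $\Theta_G$ of realized $\theta$-values, and the definitions of $\mathcal{F}_{\alpha\beta}$ and $\mathcal{F}_{\alpha\kappa^+}$ together with the verifications (successor structure via the third clause of $\le_{\Pbb}$, factorization, directedness at limits with the separate treatment of the successor-index case, coverage via Lemma (\ref{density-lemma}), and the collapse of $\lambda$ to $\kappa^{++}$ by writing each $\xi<\lambda$ as a union of $\kappa^+$ many sets of size at most $\kappa$) proceed exactly as you describe. Your explicit verification of the clause $\card{\mathcal{F}_{\alpha\beta}}<\kappa^+$ --- trapping all the relevant $\mathcal{F}^p_{ij}$ inside $\obar{M}$ for a persistent very good model $M$ with $\theta_\beta<\delta^M$ via $\bullet_1$ --- is correct, and is a point the paper's proof leaves implicit.
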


\begin{proof}[\ref{P-adds-a-simplified-morass}] Let $G$ be $\Pbb$-generic. 
Define two relations 
$\sim$ and $<$ on the set $\setof{(p,i)}{p\in G\sss\&\sss i\le \zeta^p}$,
by $(p,i)\sim (q,j)$ if there is some $r\in G$ with $r\le p$, $q$ 
witnessed by $k:\zeta^p\longrightarrow \zeta^r$ and $l:\zeta^q\longrightarrow \zeta^r$
and $k(i)=l(j)$ and, similarly, $(p,i) < (q,j)$ 
if there is some $r\in G$ with $r\le p$, $q$ 
witnessed by $k:\zeta^p\longrightarrow \zeta^r$ and $l:\zeta^q\longrightarrow \zeta^r$
and $k(i)<l(j)$. Clearly $\sim$ is an
equivalence relation and $<$ is a linear ordering which respects
$\sim$, inducing a linear ordering on the $\sim$-equivalence classes
${\mathcal E}=\setof{[(p,i)]_\sim}{p\in G\sss\&\sss i\le \zeta^p}$.  

For $x=[(p,i)]_\sim\in {\mathcal E}$ let $\theta_x = \theta^p_i$. Note, by
the definition of $\sim$ and of $\le_{\Pbb}$, that $\theta_x$ is
independent of the choice of the pair $(p,i)$. Note, also, that $x = [
p,i)]_{\sim} < y = [(q,j)]_{\sim}$ if and only if
$\theta_x<\theta_y<\kappa^+$. Consequently, $<$ well-orders ${\mathcal E}$
and, by Lemma (\ref{density-lemma}), it does so in order-type $\kappa^+$, since 
$\setof{\theta_x}{x\in {\mathcal E}}$ is unbounded in $\kappa^+$.
Let $\setof{x_{\alpha}}{\alpha<\kappa^+}$ be the $<$-increasing
ordering of ${\mathcal E}$. 

For $x$, $y\in {\mathcal E}$ define
${\mathcal F}_{xy}=\bigcup\setof{f\in {\mathcal F}^p_{ij}}{p\in G
\sss\&\sss x=[(p,i)]_\sim \sss\&\sss y=[(p,j)]_\sim }$.
Let ${\mathcal F}_x= \setof{F^p\cdot f}{p\in G\sss\&\sss
x=[(p,i)]_\sim \sss\&\sss f\in {\mathcal F}^p_{i\zeta^p} }$.

For $\alpha<\kappa^+$ let $\theta_\alpha=\theta_{x_\alpha}$. 
Let $\theta_{\kappa^+}=\lambda$. For $\alpha\le\beta<\kappa^+$ let
${\mathcal F}_{\alpha\beta}={\mathcal F}_{x_{\alpha}x_{\beta}}$. For
$\alpha<\kappa^+$ let ${\mathcal F}_{\alpha\kappa^+}= 
{\mathcal F}_{x_\alpha}$. 

It remains to check that 
$M=\tup{\tupof{\theta_\alpha}{\alpha\le \kappa^+}, 
\tupof{{\mathcal F}_{\alpha\beta}}{\alpha\le \beta\le \kappa^+}}$ 
is an $(\kappa^+,1)$-simplified morass.

Given $\alpha<\kappa^+$ let $p\in G$ be such that for some $i<\zeta^p$ 
one has $x_\alpha=[(p,i)]_\sim$ and $x_{\alpha+1}=[(p,i+1)]_\sim$.
Then, by the definition of $\Pbb$, ${\mathcal F}^p_{ii+1}$ is either a
singleton (consisting of a order-preserving map) or an amalgamation pair, 
and by the definition of
$\le_{\Pbb}$, one must then have that ${\mathcal F}_{\alpha\alpha+1}$ is a
singleton (consisting of a order-preserving map) or an amalgamation
pair.  

Suppose $\alpha\le\beta\le\gamma\le\kappa^+$ and $f\in {\mathcal
F}_{\alpha\beta}$ and $g\in {\mathcal F}_{\beta\gamma}$. If $\gamma< \kappa^+$ then by the
directedness of $G$ there is some $p\in G$ and 
$i\le j \le k\le \zeta^p$ such that $x_\alpha=[(p,i)]_\sim$, 
$x_\beta=[(p,j)]_\sim$, $x_\gamma=[(p,k)]_\sim$,
$f\in {\mathcal F}^p_{ij}$ and $g\in {\mathcal F}^p_{jk}$. Thus $g\cdot f\in
{\mathcal F}^p_{ik}$, and hence $g\cdot f\in {\mathcal F}_{\alpha\gamma}$. 
Similarly, if $\gamma=\kappa^+$ there is some $p\in G$ and 
$i\le j \le \zeta^p$ such that $x_\alpha=[(p,i)]_\sim$, $x_\beta=[(p,j)]_\sim$, 
$f\in {\mathcal F}^p_{ij}$ and $g = F^p\cdot g'$ where $g'\in {\mathcal F}^p_{j\zeta^p}$.
Thus $g\cdot f = F^p\cdot g'\cdot f\in {\mathcal F}_{\alpha\kappa^+}$. 

Similarly, suppose $\alpha\le\beta\le\gamma\le\kappa^+$ and $h\in {\mathcal
F}_{\alpha\gamma}$. If $\gamma<\kappa^+$ there is some $p\in G$ and $i\le j \le k\le \zeta^p$ 
such that $x_\alpha=[(p,i)]_\sim$, 
$x_\beta=[(p,j)]_\sim$, $x_\gamma=[(p,k)]_\sim$ and $h\in {\mathcal
F}^p_{ik}$. But then, as $S^p$ is a small SMS, 
there are $g\in {\mathcal  F}^p_{jk}$, $\subseteq {\mathcal F}_{\beta\gamma}$, and 
$f\in {\mathcal F}^p_{ij}$, $\subseteq {\mathcal F}_{\alpha\beta}$, such that 
$g\cdot f= h$. Likewise, if $\gamma=\kappa^+$ there is some 
$p\in G$ and $i\le j \le \zeta^p$ such that $x_\alpha=[(p,i)]_\sim$, 
$x_\beta=[(p,j)]_\sim$, and $h = F^p \cdot h'$ with $h'\in {\mathcal F}^p_{i\zeta^p}$.
Then, once more, as $S^p$ is a small SMS, there are $g\in {\mathcal  F}^p_{j\zeta^p}$ and 
$f\in {\mathcal F}^p_{ij}$, $\subseteq {\mathcal F}_{\alpha\beta}$, such that 
$g\cdot f= h'$, and so $F^p\cdot g\in {\mathcal F}_{\beta\kappa^+}$.

Directedness at limits: suppose $\alpha\le\beta<\varepsilon\le\kappa^+$, 
$f\in {\mathcal F}_{\alpha\varepsilon}$ and $g\in {\mathcal F}_{\beta\varepsilon}$, and
$\varepsilon$ is a limit ordinal.

First of all suppose $\varepsilon<\kappa^+$. 
By the directedness of $G$ let $p\in G$ be such that there are $i\le
j< k\le \zeta^p$ with such that $x_\alpha=[(p,i)]_\sim$, 
$x_\beta=[(p,j)]_\sim$, $x_\varepsilon=[(p,k)]_\sim$,
$f\in {\mathcal F}^p_{ik}$ and $g\in {\mathcal F}^p_{jk}$. 

If $\cf(\varepsilon)=\kappa$ then $k$ is a successor and ${\mathcal F}^p_{k-1k}$ is a singleton, say
$\set{h_{k-1}}$. Furthermore, as $S^p$ is a SMS there are $f'\in {\mathcal
F}^p_{ik-1}$ and $g'\in {\mathcal F}^p_{jk-1}$ such that 
$f=h_{k-1}\cdot f'$ and $g=h_{k-1}\cdot g'$. But then 
there is some $\gamma \in [\alpha\cup\beta,\varepsilon)$ such that 
$x_\gamma=[(p,k-1)]_\sim$ and $f'\in {\mathcal F}_{\alpha\gamma}$, 
$g'\in {\mathcal F}_{\beta\gamma}$ and $h_{k-1}\in {\mathcal
F}_{\gamma\varepsilon}$. 

If $\cf(\varepsilon)<\kappa$ we may suppose by the closure of $\Pbb$ and genericity that $k$ is a limit ordinal. 
Whence by the definition of SMS 
that there are $l\in [\max(\set{i,j}), k)$, with $x_\gamma = [(p,l)]_{~}$,
$h\in {\mathcal F}_{lk}$, $f'\in {\mathcal F}_{il}$ and $g'\in {\mathcal F}_{jl}$ such that
$f = h \cdot f'$ and $g= h\cdot g'$. Hence $f'\in {\mathcal F}_{\alpha\gamma}$, 
$g'\in {\mathcal F}_{\beta\gamma}$ and $h\in {\mathcal F}_{\gamma\varepsilon}$. 

If $\varepsilon=\kappa^+$ then, again by the directedness of $G$, there is some 
$p\in G$ such that there are $i\le j< \zeta^p$ such that $x_\alpha=[(p,i)]_\sim$, 
$x_\beta=[(p,j)]_\sim$, and there are 
$f'\in {\mathcal F}^p_{i\zeta^p}$ and $g'\in {\mathcal F}^p_{j\zeta^p}$ with
$f= F^p\cdot f'$ and $g= F^p\cdot g'$. 

Thus we have verified directedness at $\varepsilon$.

We have that $\bigcup\setof{f``\theta_\alpha}{\alpha<\kappa^+
\sss\&\sss f\in {\mathcal F}_{\alpha\kappa^+}} = \lambda$ by Lemma (\ref{density-lemma}).

Finally, we shall show that $V[G]\thinks \hat{\lambda} = \name{\kappa}_2$.

We recall a useful lemma of Velleman's and a couple of related definition. 

\begin{lemma}\label{Velleman-lemma} (\cite{Velleman-SM}, Lemma (3.2).)
If $\alpha\le\beta\le\kappa^+$, $f_0$, $f_1\in {\mathcal F}_{\alpha\beta}$, $\tau_0$, $\tau_1<\theta_\alpha$ 
and $f_0(\tau_0)=f_1(\tau_1)$ then $\tau_0=\tau_1$ and $f_0\on\tau_0+1=f_1\on\tau_1+1$.
\end{lemma}

\begin{proof}[\ref{Velleman-lemma}] By induction on $\beta$ for each fixed $\alpha$, using the 
simplicity of the structure of the sets ${\mathcal F}_{\beta'\beta'+1}$ at successor steps and
the directedness at limits at limit steps.
\end{proof}

\begin{definition}\label{defn-the-psi-maps} (\cite{M96}) 
If $\alpha\le\beta\le\kappa^+$, $\tau'<\theta_\alpha$ and there is some 
$f\in {\mathcal F}_{\alpha\beta}$ and $\tau<\theta_\beta$ such that $f(\tau')=\tau$,
let $\psi_{(\alpha,\tau'),(\beta,\tau)} = f\on \tau' + 1$. 
\end{definition}

By Lemma (\ref{Velleman-lemma}) each such $\psi_{(\alpha,\tau'),(\beta,\tau)}$ is well defined.

\begin{definition}\label{defn-xi-alpha} (\cite{M96}) 
If $\alpha<\kappa^+$, $\tau<\lambda$ and there is some 
$f\in {\mathcal F}_{\alpha\kappa^+}$ with $\tau\in\rge(f)$ write $\tau_\alpha$ for the unique
$\tau'$ such that $f(\tau')=\tau$.
\end{definition}

Let $\xi<\lambda$. There is some $\alpha_\xi<\kappa^+$ such that for all 
$\alpha\in [\alpha_\xi,\kappa^+)$ there is some $\xi_\alpha<\theta_\alpha$ with
$\psi_{(\alpha,\xi_\alpha),(\kappa^+,\xi)}$ well defined. For such $\alpha$ set
$A_\alpha =\rge(\psi_{(\alpha,\xi_\alpha),(\kappa^+,\xi)}\on\xi_\alpha)$.

By directedness at the limit ordinal $\kappa^+$, we have that for every 
$\tau< \xi$ there is some $\alpha\in[\alpha^*,\kappa^+)$ and $f\in {\mathcal F}_{\alpha\kappa^+}$ 
such that $\tau$, $\xi\in\rge(f)$, and hence $\tau\in A_\alpha$. So
$\xi = \bigcup\setof{A_\alpha}{\alpha\in [\alpha_\xi,\kappa^+)}$ and thus is the union of the 
$\kappa^+$ many sets of size at most $\kappa$; hence we must have that $V[G]\thinks \hat{\lambda} = \name{\kappa}_2$.
$\hphantom{w}$\end{proof}

\vskip18pt

In \cite{SVV}, Shelah, V\"a\"an\"anen and Veli\v ckovi\'c introduced the antichain property for 
simplified morasses, a property related to prior work of Miyamoto (\cite{Miyamoto}).

\begin{definition}\label{defn-omega2-antichain-property} (\cite{SVV}) Let 
$M=\tup{\tupof{\theta_\alpha}{\alpha\le \kappa^+}, 
\tupof{{\mathcal F}_{\alpha\beta}}{\alpha\le \beta\le \kappa^+}}$ 
be a $(\kappa^+,1)$-simplified morass. As in Definition (\ref{defn-xi-alpha}),
if $\alpha<\kappa^+$, $\tau<\kappa^{++}$ and there is some 
$f\in {\mathcal F}_{\alpha\kappa^+}$ with $\tau\in\rge(f)$ write $\tau_\alpha$ for the unique
$\tau'$ such that $f(\tau')=\tau$. We say $M$ has the $\kappa^{++}$-\emph{antichain property} if
\begin{align*}\forall X\in [\kappa^{++}]^{\kappa^{++}} \sss \sss\exists\tau, & \sss \xi\in X \sss\sss
\forall \alpha<\kappa^+ \\
& (\tau_\alpha \hbox{ and }\xi_\alpha\hbox{ are both defined} \Longrightarrow \tau_\alpha \le \xi_\alpha).
\end{align*}
\end{definition}

We note that the analogue for $(\omega,1)$-simplified morasses always fails if Martin's Axiom holds:
whilst ZFC shows there are always $(\omega,1)$-simplified morasses,
MA$_{\omega_1}$ implies that no $(\omega,1)$-simplified morass has the $\omega_1$-antichain property.
As far as we know it is open whether there is a similar consistency result for 
$(\kappa^+,1)$-simplified morasses.

\begin{proposition}\label{the-generic-morass-has-the-omega2-antichain-property} 
The $(\kappa^+,1)$-simplified morass added by forcing with $\Pbb$ has the 
$\kappa^{++}$-antichain condition.
\end{proposition}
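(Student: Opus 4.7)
The plan is to argue by contradiction, strengthening the $\lambda$-chain-condition argument of Proposition \ref{P-has-lambda-cc}. Suppose some $p_0\in\Pbb$ forces that $\dot X\subseteq\kappa^{++}$ has size $\kappa^{++}$ and yet fails the antichain property. By density, for each $\tau<\lambda$ such that some extension of $p_0$ forces $\check\tau\in\dot X$, pick $p_\tau\le p_0$ witnessing this with $\tau\in\rge(F^{p_\tau})$; there are $\lambda$ such $\tau$. I would then apply the same $\Delta$-system and pigeonhole reduction used in Proposition \ref{P-has-lambda-cc} (invoking $2^\kappa\le\lambda$) to extract $T\subseteq\lambda$ of size $\lambda$ on which $S^{p_\tau}$ takes a fixed value $S^*$, the data $Z^{p_\tau}$ and $\tupof{X_i^{p_\tau}}{i\in Z^{p_\tau}}$ are fixed, and the ranges $\rge(F^{p_\tau})$ form a $\Delta$-system whose root $R$ is an initial segment of every $\rge(F^{p_\tau})$, with the tails arranged in strictly increasing order along $T$; I further ensure $\tau=F^{p_\tau}(\eta)$ for a common $\eta\ge\sigma:=\otp(R)$.

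Next, for any $\tau<\tau'$ in $T$ I construct the amalgamation $r$ exactly as in Subclaim \ref{r-is-in-P-cc-argt}: $\zeta^r=\zeta^{S^*}+1$, $\mathcal F^r_{\zeta^{S^*}\zeta^r}=\set{\id,h}$ is an almost exact amalgamation pair with splitting point $\sigma$, $F^r$ is the inverse transitive collapse of $\rge(F^{p_\tau})\cup\rge(F^{p_{\tau'}})$ together with a single top element, and $\mathcal N^r=\mathcal N^{p_\tau}\cup\mathcal N^{p_{\tau'}}$. The arrangement of the tails guarantees $F^r\cdot\id=F^{p_\tau}$ and $F^r\cdot h=F^{p_{\tau'}}$, so $r\le p_\tau,p_{\tau'}$. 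The entire proposition then reduces to the following key claim: $r$ forces that for every morass level $\alpha$ at which $\check\tau_\alpha$ and $\check{\tau'}_\alpha$ are both defined one has $\check\tau_\alpha\le\check{\tau'}_\alpha$. This, together with $r\forces\check\tau,\check{\tau'}\in\dot X$, contradicts the failure of the antichain property.

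To prove the key claim I would work in $V[G]$ for a generic $G\ni r$, fix a morass level $\alpha$ at which both preimages are defined, and choose $s\in G$ with $s\le r$ realising $\alpha$, say $\alpha=[(s,i_\alpha)]_\sim$. Writing $\zeta^s_*=k^{rs}(\zeta^{S^*})$, $\zeta^s_r=k^{rs}(\zeta^r)$ and $f^{rs}$ for the witness to $s\le r$, one has $\set{\id,h}\subseteq\mathcal F^s_{\zeta^s_*\zeta^s_r}$, $\tau=F^s(f^{rs}(\eta))$ and $\tau'=F^s(f^{rs}(h(\eta)))$. A case analysis on $i_\alpha$ handles three cases routinely: for $i_\alpha\le\zeta^s_*$ both preimages are computed via the same map in $\mathcal F^s_{i_\alpha\zeta^s_*}$, yielding $\tau_\alpha=\tau'_\alpha$; for $i_\alpha=\zeta^s_r$ direct computation gives $\tau_\alpha=\eta<h(\eta)=\tau'_\alpha$; and for $i_\alpha>\zeta^s_r$, factoring $f^{rs}$ through $i_\alpha$ and using order-preservation gives $\tau_\alpha<\tau'_\alpha$.

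The main obstacle, and the only technically delicate point, is the intermediate case $\zeta^s_*<i_\alpha<\zeta^s_r$, which arises when other conditions in $G$ interleave morass levels between the two amalgamated copies. Here I would factor both $\id$ and $h\in\mathcal F^s_{\zeta^s_*\zeta^s_r}$ through $i_\alpha$, and use order-preservation of the constituent maps together with repeated application of Lemma \ref{Velleman-lemma} to show that $\mathcal F^s_{\zeta^s_*i_\alpha}$ must be either a singleton (forcing $\tau_\alpha=\tau'_\alpha$) or an amalgamation pair whose splitting point coincides with $\sigma$ (forcing $\tau_\alpha=\eta<\theta^*_{\zeta^{S^*}}+(\eta-\sigma)=\tau'_\alpha$). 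In either sub-case $\tau_\alpha\le\tau'_\alpha$, completing the verification of the key claim and hence the proof.
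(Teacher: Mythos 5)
Your proposal follows essentially the same route as the paper: thin the family of conditions deciding elements of $\dot{X}$ by a $\Delta$-system/pigeonhole argument, then amalgamate two of them exactly as in the proof of Proposition \ref{P-has-lambda-cc}, so that the head--tail--tail amalgamation pair at the top of $r$ forces $\tau_\alpha\le\tau'_\alpha$ at every level. You actually go further than the paper, which merely asserts that the amalgamated condition forces the required inequality; your verification of the key claim is sound except that in the intermediate case $\zeta^s_*<i_\alpha<\zeta^s_r$ the set ${\mathcal F}^s_{\zeta^s_* i_\alpha}$ need not literally be a singleton or an amalgamation pair --- but the desired inequality follows anyway, since factoring $\id = u_1\cdot u_0$ forces $u_0=\id$ (order-preserving maps of ordinals are inflationary), whence $\tau_\alpha=\obar{\xi}\le v_0(\obar{\xi})=\tau'_\alpha$ for any factorization $h=v_1\cdot v_0$.
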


\begin{proof}[\ref{the-generic-morass-has-the-omega2-antichain-property}]
Suppose $p\in \Pbb$ and $p\forces \name{X}\in [\kappa^{++}]^{\kappa^{++}}$. We can find
some $E\in [\kappa^{++}]^{\kappa^{++}}$ and for each $\xi\in E$ some $p_\xi\in \Pbb$  
such that $p_\xi \forces  s\in \name{X}$. By Lemma(\ref{density-lemma}) we may 
assume that $\xi\in \rge(F^{p_\xi})$ for each $\xi\in E$. By the $\Delta$-system lemma we
may thin $E$, if necessary, so that for distinct $\xi$, $\xi'\in S$ we have 
$\dom(F^{p_\xi})=\dom(F^{p_{\xi'}})$, there is some $\obar{\xi}$ such that 
$F^{p_{\xi}}(\obar{\xi})=\xi$,  $F^{p_{\xi'}}(\obar{\xi})=\xi'$,
$\xi'\ni \rge(F^{p_\xi})$ and $\xi\ni \rge(F^{p_{\xi'}})$. 
Now apply the proof of Proposition (\ref{P-has-lambda-cc}) to the
collection of $p_\xi$ for $\xi$ in the thinned $E$. This gives us some $\xi$, $\xi'\in E$ with $\xi<\xi'$
and some $r\le p_\xi$, $p_{\xi'}$ such that 
$r\forces \forall \alpha<\kappa^+\sss (\xi_\alpha \hbox{ and }\xi'_\alpha\hbox{ are both defined} 
\Longrightarrow \xi_\alpha \le \xi'_{\alpha})$.
\end{proof}

\section{Summary of results proven}\label{results-proven}

\begin{theorem}\label{main-theorem} Suppose $\lambda$ is regular and $2^\kappa< \lambda$. 
There is a $\kappa^+$-proper, $\lambda$-chain condition forcing (hence preserving 
$\kappa^+$, $\lambda$ and all larger cardinals and cofinalities)
which collapses any cardinals $\mu$ with $\kappa^+<\mu<\lambda$, 
and such that in the forcing extension there is an $(\kappa^+,1)$-simplified morass
with the $\kappa^{++}$-antichain condition.
\end{theorem}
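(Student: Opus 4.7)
The proof is an assembly of the propositions established in the preceding sections, and requires only the observation that the cardinal collapse is an automatic consequence of adding a simplified morass of the stated form.

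The plan is as follows. Take $\Pbb$ to be the forcing introduced in Definition \ref{defn-main-forcing-notion}. Properness (and hence preservation of $\kappa^+$) is given by Proposition \ref{P-is-proper}. Under the hypothesis $2^{\kappa}<\lambda$, Proposition \ref{P-has-lambda-cc} gives the $\lambda$-chain condition, which both preserves $\lambda$ and, together with regularity of $\lambda$, preserves all cardinals and cofinalities at or above $\lambda$. (Proposition \ref{forcing-with-P-preserves-lambda} is strictly subsumed, but recorded separately because it is the template needed for future applications of $\Pbb$ as a side-condition forcing.) Proposition \ref{P-adds-a-simplified-morass} produces in $V[G]$ an $(\kappa^+,1)$-simplified morass whose top level is $\theta_{\kappa^+}=\lambda$, and as part of that proof it is shown that $V[G]\thinks \lambda=\kappa^{++}$. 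This yields $\kappa^+$-properness in the sense of Hyttinen--Rautila (the $\lambda=\kappa^{++}$ case of Proposition \ref{properness-proof-that-lambda-is-preserved} applied via Proposition \ref{P-is-proper}).

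The collapse clause now follows automatically: since $V$ and $V[G]$ agree on $\kappa^+$ and on all cardinals $\geq\lambda$, and since $V[G]\satisfies \lambda=\kappa^{++}$, every $V$-cardinal $\mu$ with $\kappa^+<\mu<\lambda$ must satisfy $V[G]\satisfies \card{\mu}=\kappa^+$, \emph{i.e.}\ $\mu$ is collapsed. Finally, the morass produced in Proposition \ref{P-adds-a-simplified-morass} has the $\kappa^{++}$-antichain property by Proposition \ref{the-generic-morass-has-the-omega2-antichain-property}.

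There is essentially no obstacle at this stage: every ingredient is already in hand. The only small bookkeeping issue is to make explicit the (routine) deduction that a $\kappa^+$-cc.\ preserving, $\lambda$-cc.\ forcing in which the generic extension thinks $\lambda=\kappa^{++}$ must collapse precisely the intermediate cardinals, and to check that the $\kappa^+$-properness clause in the statement matches Proposition \ref{P-is-proper} (standard properness implies $\kappa$-properness in the relevant sense for any regular uncountable $\kappa^+\leq \omega_1$-style enlargement, and for $\kappa>\omega$ the parallel argument of Proposition \ref{P-is-proper} goes through verbatim using the very good models of size $\kappa$ introduced in Section \ref{good-models-section}).
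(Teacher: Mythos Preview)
Your proposal is correct and takes essentially the same approach as the paper: both assemble the theorem from Propositions \ref{P-is-proper}, \ref{P-has-lambda-cc}, \ref{P-adds-a-simplified-morass} and \ref{the-generic-morass-has-the-omega2-antichain-property}, with the collapse of intermediate cardinals following from $V[G]\models \lambda=\kappa^{++}$ (shown inside the proof of Proposition \ref{P-adds-a-simplified-morass}). Your final paragraph's worry about matching ``$\kappa^+$-properness'' to Proposition \ref{P-is-proper} is unnecessary: that proposition is already stated and proved for arbitrary $\kappa$ using very good models of size $\kappa$, so no separate argument is needed.
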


\begin{proof} Immediate from Propositions (\ref{P-is-proper}),
(\ref{P-has-lambda-cc}),
(\ref{P-adds-a-simplified-morass}) and (\ref{the-generic-morass-has-the-omega2-antichain-property}).
\end{proof}

\begin{corollary}\label{lambda-equal-omega2-case-of-main-theorem} If 
$2^{\kappa}= \kappa^+$ then there is 
a $\kappa^+$-proper, $\kappa^{++}$-cc forcing which preserves 
all cardinals and cofinalities and adds an
$(\kappa^+,1)$-simplified morass
with the $\kappa^{++}$-antichain condition.
\end{corollary}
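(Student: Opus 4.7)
The plan is to invoke Theorem \ref{main-theorem} with $\lambda=\kappa^{++}$ and supplement its conclusion with Proposition \ref{less-than-kappa-closure} in order to pick up preservation at and below $\kappa$.

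First, the hypothesis $2^\kappa=\kappa^+$ gives $2^\kappa<\kappa^{++}$, so the choice $\lambda=\kappa^{++}$ meets the assumption $2^\kappa<\lambda$ of Theorem \ref{main-theorem}, and $\lambda$ is manifestly regular. The theorem then yields a forcing $\Pbb$ which is $\kappa^+$-proper, has the $\kappa^{++}$-chain condition, preserves $\kappa^+$, $\kappa^{++}$ and all larger cardinals and cofinalities, and in whose generic extension there is an $(\kappa^+,1)$-simplified morass with the $\kappa^{++}$-antichain property.

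Next, observe that the open interval $(\kappa^+,\kappa^{++})$ contains no cardinals, so the `collapsing' clause in Theorem \ref{main-theorem} -- which asserts the collapse of cardinals strictly between $\kappa^+$ and $\lambda$ -- is vacuous in this instance, and no cardinals in the intermediate range are collapsed. Finally, for preservation at and below $\kappa$, apply Proposition \ref{less-than-kappa-closure}: with $\kappa$ regular, $\Pbb$ is $\lk$-closed, and any $\lk$-closed forcing preserves all cardinals and cofinalities up to and including $\kappa$.

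Combining these three observations, $\Pbb$ preserves every cardinal and every cofinality, is $\kappa^+$-proper, has the $\kappa^{++}$-chain condition, and adds a $(\kappa^+,1)$-simplified morass with the $\kappa^{++}$-antichain property, as required. There is no real obstacle here beyond verifying that the arithmetic hypothesis licenses the choice $\lambda=\kappa^{++}$; the corollary is essentially a repackaging of the main theorem in the optimal case where $\lambda$ is taken as small as the hypotheses permit.
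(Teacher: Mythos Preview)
Your argument is correct and follows the paper's own approach: apply Theorem~\ref{main-theorem} with $\lambda=\kappa^{++}$, using $2^\kappa=\kappa^+<\kappa^{++}$ to meet the hypothesis. The paper's proof is in fact the single sentence ``By the premise that $2^{\kappa}= \kappa^+$ we may apply Theorem~(\ref{main-theorem}) with $\lambda=\kappa^{++}$,'' so you have actually been more careful than the paper in two respects: you explicitly note that the interval $(\kappa^+,\kappa^{++})$ contains no cardinals (so the collapsing clause is vacuous), and you invoke Proposition~\ref{less-than-kappa-closure} to secure preservation of cardinals and cofinalities at and below $\kappa$, which Theorem~\ref{main-theorem} as stated does not cover. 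One small caveat: your appeal to Proposition~\ref{less-than-kappa-closure} requires $\kappa$ to be regular, an assumption not stated in the corollary; the paper's terse proof sidesteps this by simply not addressing preservation below $\kappa^+$ at all.
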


\begin{proof}[\ref{lambda-equal-omega2-case-of-main-theorem}] By the premise that 
$2^{\kappa}= \kappa^+$ we may apply Theorem (\ref{main-theorem}) with 
$\lambda=\kappa^{++}$.
\end{proof}

\begin{corollary}\label{forcing-with-P-adds-a-Kurepa-tree-and-a-square-sequence} If $2^\kappa<\lambda$
the forcing $\Pbb$ of Theorem (\ref{main-theorem}) adds an 
$\kappa^+$-Kurepa tree. If $\kappa=\omega$ then $\Pbb$ adds $\squaresub{\omega_1}$ sequence.
\end{corollary}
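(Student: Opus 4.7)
The plan is to deduce both statements from the existence of the $(\kappa^+,1)$-simplified morass added by forcing with $\Pbb$ (Proposition \ref{P-adds-a-simplified-morass}), together with Theorem \ref{main-theorem} which guarantees that in $V[G]$ the ordinal $\lambda=\kappa^{++}$. These derivations are classical (going back to Velleman and Donder, see e.g.\ \cite{Velleman-SM}, \cite{Donder}), so the task is really one of invocation and light bookkeeping, not of new combinatorics.

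First I would verify the Kurepa tree. Let $M=\tup{\tupof{\theta_\alpha}{\alpha\le\kappa^+},\tupof{\mathcal F_{\alpha\beta}}{\alpha\le\beta\le\kappa^+}}$ be the generic simplified morass in $V[G]$. Define a tree $T$ on the underlying set $\bigcup_{\alpha<\kappa^+}\set{\alpha}\times \theta_\alpha$, with $(\alpha,\tau)<_T (\beta,\tau')$ iff $\alpha<\beta$ and there is some $f\in {\mathcal F}_{\alpha\beta}$ with $f(\tau)=\tau'$. Lemma \ref{Velleman-lemma} guarantees that each node $(\beta,\tau')$ has at most one $<_T$-predecessor at each lower level, so the predecessors of any node are linearly ordered and $T$ is a tree; the $\alpha$-th level has size $\theta_\alpha<\kappa^+$, so $T$ is a $\kappa^+$-tree. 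For each $\xi<\kappa^{++}=\lambda$, Definition \ref{defn-xi-alpha} produces $\xi_\alpha<\theta_\alpha$ for all $\alpha$ in some final segment of $\kappa^+$; let $b_\xi=\setof{(\alpha,\xi_\alpha)}{\xi_\alpha\hbox{ defined}}$. By Lemma \ref{Velleman-lemma} each $b_\xi$ is a chain in $T$, and by directedness at $\kappa^+$, if $\xi\ne\xi'$ then there exists $\alpha<\kappa^+$ and $f\in {\mathcal F}_{\alpha\kappa^+}$ with $\xi,\xi'\in\rge(f)$, whence $\xi_\alpha\ne\xi'_\alpha$, so the $b_\xi$ are pairwise distinct cofinal branches, giving a $\kappa^+$-Kurepa tree.

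For the second assertion, assume $\kappa=\omega$, so the generic morass is an $(\omega_1,1)$-simplified morass and $\lambda=\omega_2$. I would follow Donder's construction of a $\squaresub{\omega_1}$-sequence from a gap-one simplified morass. For each limit ordinal $\alpha<\omega_2$ use the morass maps at the top level: pick the least $\beta_\alpha<\omega_1$ such that $\alpha$ lies in $\rge(f)$ for some $f\in {\mathcal F}_{\beta_\alpha \omega_1}$, and then use the canonical factorisation maps (via Definition \ref{defn-the-psi-maps} and Lemma \ref{Velleman-lemma}) and directedness at limits to define $C_\alpha$ as the range of the $\psi$-images of a cofinal sequence through $\alpha_{\beta_\alpha}<\theta_{\beta_\alpha}<\omega_1$. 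Coherence $C_\beta=C_\alpha\cap\beta$ for limit $\beta\in C_\alpha$ falls out of the factorisation clause $\mathcal F_{\alpha\gamma}=\setof{g\cdot f}{f\in{\mathcal F}_{\alpha\beta},g\in{\mathcal F}_{\beta\gamma}}$ combined with Lemma \ref{Velleman-lemma}, which force the $\psi$-maps to agree on common initial segments.

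The routine part is the Kurepa tree construction; the genuinely fiddly step is the verification of coherence for the $\square$-sequence, in particular ensuring that for limits $\beta\in C_\alpha$ the choice of $\beta_\beta$ agrees (after the appropriate factorisation) with the trace of $C_\alpha$ down to $\beta$. This is where one must lean on the exact-amalgamation and factorisation structure of the morass, and where a careful proof would either reproduce Donder's argument in detail or simply cite \cite{Donder}; given that the corollary is stated as a corollary, I would opt for the latter and note only that the morass of Proposition \ref{P-adds-a-simplified-morass} satisfies the standard hypotheses of Donder's theorem.
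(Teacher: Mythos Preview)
Your approach is essentially the same as the paper's: invoke the generic simplified morass from Proposition~\ref{P-adds-a-simplified-morass} (via Theorem~\ref{main-theorem}) and then appeal to classical results deriving a Kurepa tree and $\squaresub{\omega_1}$ from a gap-one simplified morass. The paper's proof is a pure citation, while you additionally sketch the standard constructions; your Kurepa-tree sketch is correct and is exactly Velleman's construction. One small divergence: you attribute the $\squaresub{\omega_1}$ derivation to Donder~\cite{Donder}, whereas the paper cites Velleman~\cite{Velleman-SM},~\cite{Velleman-MDF} for both results --- either citation is legitimate, but you should be aware of the discrepancy.
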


\begin{proof}[\ref{forcing-with-P-adds-a-Kurepa-tree-and-a-square-sequence}]
Immediate from Theorem (\ref{main-theorem}) and the work of Velleman in \cite{Velleman-SM},
and \cite{Velleman-SM} and \cite{Velleman-MDF}, respectively, where it is shown that 
if there is a $(\kappa^+,1)$-simplified morass then there is a Kurepa tree, and
if there is a $(\omega_1,1)$-simplified morass then $\squaresub{\omega_1}$ holds. 
\end{proof}

\begin{remark} That, when $\kappa=\omega$, $\Pbb$ adds a Kurepa tree should be 
contrasted with the old theorem of Jensen that if $\squaresub{\omega_1}$ holds there is a ccc forcing
-- with finite working parts -- to add an $\omega_1$-Kurepa tree. See
\cite{Velickovic} for a proof of this result in the style of the
introductory remarks to this paper.

However there can be no direct extension of that theorem to a one that asserts that if 
$\squaresub{\omega_1}$ there is ccc forcing to add an $(\omega_1,1)$-simplified 
morass. This is because $\squaresub{\omega_1}$ is consistent with the weak Chang conjecture holding,
no ccc forcing can destroy the truth of the weak Chang conjecture and the existence of 
an $(\omega_!,1)$-simplified morass implies the failure of the weak Chang conjecture.
\end{remark}
\vskip12pt

In future work we shall address higher gap analogues of these results.

\vskip18pt

\end{document}